\DeclareMathOperator*{\argmin}{arg\,min}
\numberwithin{equation}{section}
\numberwithin{figure}{section}
\theoremstyle{plain}
\newtheorem{thm}{\protect\theoremname}
\newtheorem{lem}[thm]{\protect\lemmaname} 
\providecommand{\lemmaname}{Lemma}
\providecommand{\theoremname}{Theorem}
\title{Multicontinuum Splitting Scheme for Multiscale Flow Problems}
\author{Yalchin Efendiev\thanks{Department of Mathematics, Texas A\&M University, College Station, TX 77843, USA}, Wing Tat Leung\thanks{Department of Mathematics, City University of Hong Kong, Hong Kong}, Buzheng Shan\footnotemark[1], Min Wang\thanks{Department of Mathematics, University of Houston, Houston, TX 77004, USA }}
\date{}
\begin{document}

\maketitle

\begin{abstract}
In this paper, we propose multicontinuum splitting schemes for multiscale problems, focusing on a parabolic equation with a high-contrast coefficient.
Using the framework of multicontinuum homogenization, we introduce spatially smooth macroscopic variables and decompose the multicontinuum solution space into two components to effectively separate the dynamics at different speeds (or the effects of contrast in high-contrast cases).
By treating the component containing fast dynamics (or dependent on the contrast) implicitly and the component containing slow dynamics (or independent of the contrast) explicitly, we construct partially explicit time discretization schemes, which can reduce computational cost. The derived stability conditions are contrast-independent, provided the continua are chosen appropriately.
Additionally, we discuss possible methods to obtain an optimized decomposition of the solution space, which relaxes the stability conditions while enhancing computational efficiency. A Rayleigh quotient problem in tensor form is formulated, and simplifications are achieved under certain assumptions. 
Finally, we present numerical results for various coefficient fields and different continua to validate our proposed approach. It can be observed that the multicontinuum splitting schemes enjoy high accuracy and efficiency.

\textbf{Keywords:} multiscale, multicontinuum, homogenization, partially explicit, multicontinuum splitting scheme.

\end{abstract}

\section{Introduction}
\label{sec:introduction}

Multiscale and high-contrast properties arise in many problems, such as flows in porous media and composite materials. These problems are characterized by heterogeneities that span multiple scales and exhibit sharp contrasts in material properties, posing significant challenges in both analysis and numerical simulations. Traditional methods often struggle to efficiently capture the influence of these scales, requiring either extremely fine resolutions or approximations that compromise accuracy. For instance, in the case of a parabolic equation with a coefficient $\kappa$, the time step size for explicit methods should be $O(H^2/\max \kappa)$ to ensure stability. Considering that the mesh size $H$ needs to be fine enough to resolve the spatial region, the time step size is further significantly restricted. 
To address these issues, we propose a multicontinuum splitting scheme for multiscale problems, which greatly saves computational effort. This approach loosens the spatial mesh size by leveraging the idea of multicontinuum homogenization, and achieves a contrast-independent stability condition on the time step size by splitting the solution space.

Various methods from different perspectives have been developed to solve multiscale problems, especially time-independent ones. 
Homogenization methods \cite{papanicolau1978asymptotic,wu2002analysis,hornung2012homogenization} define upscaled coefficients through local solutions of fine-grid problems and utilize them to formulate the macroscopic equations on the coarse grid; Multiscale methods, like the Multiscale Finite Element Method (MsFEM) \cite{hou1997multiscale, hou1999convergence, efendiev2009multiscale}, the Generalized Multiscale Finite Element Method (GMsFEM) \cite{efendiev2013generalized,chung2015mixed,chung2016adaptive}, the Constraint Energy Minimizing GMsFEM (CEM-GMsFEM) \cite{chung2018constraint,chung2018constraint2,chung2023multiscale}, and the Nonlocal Multicontinua Method \cite{chung2018non}, generally construct local basis functions via cell problems to capture fine-scale details of the coefficients, and use them to form a multiscale space that approximates the solution.
For methods tailored for time-evolving equations, an effective method is the partially explicit time discretization approach \cite{chung2021contrast,chung2021contrast2}. It selects the dominant multiscale modes by using CEM-GMsFEM and handles them in an implicit fashion. Other modes are processed explicitly for efficiency. An unconditional stability with respect to the contrast can be achieved with appropriate selections of the modes. 

This study is based on the framework of multicontinuum homogenization \cite{efendiev2023multicontinuum,chung2024multicontinuum,ammosov2024multicontinuum}. It introduces macroscopic variables to represent the local averages of solution in each continuum, and posits an expansion in each Representative Volume Element (or coarse block) about the variables and multiscale basis functions. The multiscale basis functions are obtained from certain constraint local energy minimization problems solved on the fine grid to capture the multiscale property, which is inspired by \cite{chung2018constraint,chung2018non} and can promise the localization. Oversampling techniques \cite{hou1997multiscale} are utilized to mitigate boundary effects. We assume that the macroscopic variables are smooth over coarse regions, and then a system of macroscopic equations can be derived. The macroscopic system is still composed of parabolic equations if the original equation is parabolic. Besides, the fine-scale information can be recovered from macroscopic solutions by the multicontinuum expansion.

In this paper, we develop partially explicit time discretization schemes based on multicontinuum homogenization for parabolic equations with high-contrast coefficients.
We first split the continua (or the macroscopic variables) into two groups according to the speeds of the dynamics which they describe, and decompose the solution space into two components by using the multicontinuum expansion. We treat the component containing fast dynamics (typically corresponding to the continua associated with high-value regions) implicitly and the component containing slow dynamics (typically corresponding to the continua associated with low-value regions) explicitly. Then partially explicit schemes can be designed, and we show that the stability conditions only depend on the component treated explicitly. Thanks to the construction of cell problems in multicontinuum homogenization, the second component is independent of the contrast of the coefficient, provided the continua are chosen appropriately. This indicates that the decomposition indeed isolates the effects of contrast and that the stability conditions are contrast-independent. Combined with the fact that the macroscopic equations are computed on the coarse grid, such schemes significantly weaken the stability conditions compared to the explicit scheme, particularly in high-contrast cases. In comparison to the implicit scheme, the more continua included in the explicit component, the greater the computational cost savings.

In the paper, we also consider a more general case of solution space decomposition, where the decomposition is not defined a priori but is instead induced by the mixture of continua. Our goal is to find an optimal decomposition approach to separate the dynamics at different speeds such that the stability conditions on the time step are further relaxed and more continua can be included in the explicit component. Based on the estimates of the stability conditions and the properties of multicontinuum homogenization, we formulate a generalized Rayleigh quotient problem in tensor form. However, the min-max optimization problem involving tensors is costly to solve. Therefore, we introduce some assumptions and simplify it into a computationally efficient generalized eigenvalue problem. We note that our approach is applicable to general multiscale problems and is not limited to high-contrast cases.

We apply the proposed approach to various examples with different coefficient fields and numbers of continua. 
The results demonstrate that the multicontinuum splitting schemes achieve accuracy comparable to the multicontinuum implicit scheme while reducing computational effort.
Additionally, we observe that the eigenvalue problem automatically splits the continua according to the coefficient values of their corresponding regions, as expected, when the continua represent local averages of the solution in regions with different values.
This suggests that following the concept of continuum introduced in \cite{efendiev2023multicontinuum} is often the suitable choice, and this also validates the reasonableness of the concept to some degree. 
General examples, where the continua are defined in different patterns or do not represent the local averages but are defined by other auxiliary functions, are also included. The algorithm remains effective in separating modes with different speeds in these cases as well.

The paper is organized as follows. 
In Section \ref{sec:preliminaries}, we provide the preliminaries and review the main ideas of multicontinuum homogenization. 
In Section \ref{sec:schemes}, the multicontinuum splitting schemes are proposed and the stability conditions are derived.
Section \ref{sec:construction} is dedicated to the optimized construction of the space decomposition. 
We present numerical results in Section \ref{sec:numerical} and finally draw conclusions in Section \ref{sec:conclusions}.

\section{Preliminaries}
\label{sec:preliminaries}

In this paper, we consider the following parabolic partial differential equation: for a bounded polygonal domain $\Omega \subset \mathbb{R}^d\; (d=2,3)$, $u \in L^2(0,T;H^2(\Omega))\cap H^1(0,T; L^2(\Omega))$ satisfies
\begin{equation}
\label{eq:strong_form}
\partial_t u - \nabla\cdot(\kappa\nabla u) = f,
\end{equation}
where $0 < \min{\kappa} \leq \kappa(x) \leq \max{\kappa}$ is of high contrast, i.e. $\dfrac{\max{\kappa}}{\min{\kappa}} \gg 1$, and $f \in L^2(0,T;L^2(\Omega))$. The initial condition $u_0=u(\cdot,0)\in H^2 (\Omega)$ is given, and we consider zero Dirichlet boundary condition for simplicity.

The variational formulation is to find $u(\cdot,t) \in V \; (0 < t \leq T)$ such that
\begin{equation}
\label{eq:variational_form}
(\partial_{t}u,v)+a(u,v)=(f,v), \quad \forall v \in V,
\end{equation}
where $V=H_0^1(\Omega)$ and $a(u,v)=\int_{\Omega} \kappa \nabla u \cdot \nabla v$. We require that $u_0 \in V$ as well.

In this section, we briefly review the main ideas of multicontinuum homogenization method in \cite{efendiev2023multicontinuum,chung2024multicontinuum} and apply it to the weak form of the parabolic equation (\ref{eq:variational_form}). We remark that the original multicontinuum homogenization method involves the concept of Representative Volume Element (RVE) to represent the behavior of solution in the entire coarse-grid block for computational efficiency. In this paper, however, we take RVE to be the coarse block itself for illustration.

We partition the computational domain $\Omega$ into regular coarse-grid elements $K$'s, whose sizes are larger than the scale of heterogeneities. We assume that in each coarse block $K$ there are $N$ continua, which represent different average states and can often be identified by the value of high-contrast coefficient. We define the characteristic function $\psi_i$ corresponding to continuum $i$ as 
\begin{equation}
\psi_i = \begin{cases}
  1, & \text{in continuum} ~i,\\
  0, & \text{otherwise}.
\end{cases}
\end{equation} 
Then we introduce a macroscopic variable $U_i$ to represent the homogenized solution in continuum $i$, that is,  $U_i(x_K^*)=\dfrac{\int_K u \psi_i}{\int_K \psi_i}$ for some $x_K^*\in K$. Since we take $\psi_i$ to be the characteristic function, $U_i$ represents the local average of solution in continuum $i$. In general, one can define $\psi_i$'s differently by solving local spectral problems to represent each continua, and $U_i$ can have other physical meanings.

One of our most important assumptions is that macroscopic variables $U_i$ are smooth over all coarse blocks. With the smoothness, we can introduce the following general multicontinuum expansion for solution $u$ in each coarse block $K$
\begin{equation}
\label{eq:mc_full_expansion}
u = \phi_i U_i + \phi_i^m \nabla_m U_i + \phi_i^{mn} \nabla_{mn}^2 U_i + \cdots,
\end{equation}
where $\phi_i$, $\phi_i^m$, $\phi_i^{mn}$, $\ldots$ are multiscale basis functions obtained via some cell problems. The summation over repeated indices is taken. In the following, for simplicity we will use the two-term expansion 
\begin{equation}
\label{eq:mc_expansion}
u \approx \phi_i U_i + \phi_i^m \nabla_m U_i.
\end{equation}

To reduce boundary effects, we formulate cell problems in oversampled domains. We take $K^+$ to be an oversampled region constructed around $K$ and consisted of several coarse blocks, denoted by $K^p$. Here, $p$ is an index, and the target coarse block $K=K^{p_0}$ is at the center of $K^+$. We want to minimize the local energy in $K^+$ under distinct constraints by using Lagrange multipliers $\mu$. For the first cell problem, to obtain $\phi_i$'s, we impose constraints to represent the constants in the average behavior of each continuum
\begin{equation}
\label{eq:mc_cell_problem_1}
\begin{split}
&\int_{K^+}\kappa \nabla \phi_{i} \cdot \nabla v -
\sum_{j,p} \dfrac{\mu_{ij}^p}{\int_{K^p}\psi_j^p} \int_{K^p}\psi_j^p  v = 0,\\
&\int_{K^p} \phi_{i} \psi_j^p = \delta_{ij} \int_{K^p} \psi_j^p.
\end{split}
\end{equation}
To obtain $\phi_i^m$'s, we formulate the second cell problem imposing constraints to represent the linear functions on average
\begin{equation}
\label{eq:mc_cell_problem_2}
\begin{split}
&\int_{K^+}\kappa \nabla \phi^{m}_{i}\cdot \nabla v -  \sum_{j,p} \dfrac{\mu_{ij}^{mp}}{\int_{K^p}\psi_j^p} \int_{K^p}\psi_j^p v =0,\\
&\int_{K^p}  \phi^{m}_{i} \psi_j^p = \delta_{ij} \int_{K^p} (x_m-\tilde{x}_{m})\psi_j^p ,\\
&\int_{K^{p_0}} (x_m-\tilde{x}_{m})\psi_j^{p_0} =0, \\
\end{split}
\end{equation}
where $\tilde{x}_m$ is a constant. We remark that the solution in $K$ should be independent of the oversampling size when it is large enough, as a result of the construction of our cell problems. Also, the following estimates for multiscale basis functions and their gradients can be noticed
\begin{equation}
\label{eq:mc_scalings}
\begin{split}
&\|\phi_i\|=O(1),\quad \|\nabla \phi_i\|=O(\dfrac{1}{H}),\\
&\|\phi_i^{m}\|=O(H),\quad \|\nabla \phi_i^{m}\|=O(1),
\end{split}
\end{equation}
where $H$ is the size of RVE (coarse block $K$).

Now we substitute the multicontinuum expansion for solution $u$ and test function $v$
\begin{equation}
u \approx \phi_i U_i + \phi_i^m \nabla_m U_i, \quad v \approx \phi_j V_j + \phi_j^n \nabla_n V_j
\end{equation}
into the variational form (\ref{eq:variational_form}).
For the local integral of the diffusion term, we have
\begin{equation}
\begin{split}
& \int_{K} \kappa \nabla u \cdot \nabla v \\
\approx & \int_{K} \kappa \nabla (\phi_i U_i) \cdot \nabla (\phi_j V_j) 
+ \int_{K} \kappa \nabla (\phi_i U_i) \cdot \nabla (\phi_j^n \nabla_n V_j)
+ \int_{K} \kappa \nabla (\phi_i^m \nabla_m U_i) \cdot \nabla (\phi_j V_j) 
+ \int_{K} \kappa \nabla (\phi_i^m \nabla_m U_i) \cdot \nabla (\phi_j^n \nabla_n V_j) \\
\approx & \left( \int_{K} \kappa \nabla \phi_i \cdot \nabla \phi_j \right) U_i V_j
+ \left( \int_{K} \kappa \nabla \phi_i \cdot \nabla \phi_j^n \right) U_i \nabla_n V_j
+ \left( \int_{K} \kappa \nabla \phi_i^m \cdot \nabla \phi_j \right) \nabla_m U_i V_j 
+ \left( \int_{K} \kappa \nabla \phi_i^m \cdot \nabla \phi_j^n \right) \nabla_m U_i \nabla_n V_j.
\end{split}
\end{equation}
Here, we use the smoothness of $U_i$ and $V_j$, along with the assumption that the variations of both these variables and their gradients are small compared to the variations of $\phi_i$ and $\phi_i^m$. Similarly, we can approximate the term with time derivative
\begin{equation}
\begin{split}
\int_{K} \dfrac{\partial u}{\partial t} v 
\approx \int_{K} \dfrac{\partial (\phi_i U_i + \phi_i^m \nabla_m U_i)}{\partial t}(\phi_j V_j + \phi_j^n \nabla_n V_j)
\approx \int_{K} \dfrac{\partial (\phi_i U_i)}{\partial t}(\phi_j V_j)
\approx \left( \int_{K} \phi_i \phi_j \right) \dfrac{\partial}{\partial t}U_i V_j,
\end{split}
\end{equation}
where for the second approximate equality we use the estimates for $\phi_i$ and $\phi_i^m$ as stated in (\ref{eq:mc_scalings}).

By summing all the local integrals over $K$, we obtain
\begin{equation}
\label{eq:mc_derivation_step}
\begin{split}
& \int_{\Omega} \dfrac{\partial u}{\partial t} v + \int_{\Omega} \kappa \nabla u \cdot \nabla v \\
\approx & \int_{\Omega} \gamma_{ji} \dfrac{\partial}{\partial t} U_i V_j + \int_{\Omega} \alpha_{ji} U_i V_j + \int_{\Omega} \alpha_{ji}^{*n} U_i \nabla_n V_j + \int_{\Omega} \alpha_{ji}^{m*} \nabla_m U_i V_j + \int_{\Omega} \alpha_{ji}^{mn} \nabla_m U_i \nabla_n V_j,
\end{split}
\end{equation}
and
\begin{equation}
\begin{split}
\int_{\Omega} f v 
\approx \int_{\Omega} f_j V_j,
\end{split}
\end{equation}
by defining the effective properties as
\begin{equation}
\begin{split}
& \alpha_{ij} = \dfrac{1}{|K|} \int_{K} \kappa \nabla \phi_j \cdot \nabla \phi_i, \quad
\alpha_{ij}^{m*} = \dfrac{1}{|K|} \int_{K} \kappa \nabla \phi_j^m \cdot \nabla \phi_i, \quad
\alpha_{ij}^{*m} = \dfrac{1}{|K|} \int_{K} \kappa \nabla \phi_j \cdot \nabla \phi_i^m, \\
& \alpha_{ij}^{mn} = \dfrac{1}{|K|} \int_{K} \kappa \nabla \phi_j^m \cdot \nabla \phi_i^n, \quad
\gamma_{ij} = \dfrac{1}{|K|} \int_{K} \phi_j \phi_i, \quad
f_j = \dfrac{1}{|K|} \int_{K} f \phi_j.
\end{split}
\end{equation}
Since the summation of the third and fourth terms in (\ref{eq:mc_derivation_step}) is negligible by integration by parts, by the smoothness of $U_i$ and $V_j$, we have the following system of macroscopic parabolic equations
\begin{equation}
\gamma_{ji} \dfrac{\partial}{\partial t} U_i + \alpha_{ji} U_i - \nabla_n (\alpha_{ji}^{mn} \nabla_m U_i) = f_j,
\end{equation}
for any $j=1,2,\ldots, N$.
The macroscopic equations will be solved on the coarse grid to obtain the homogenized solutions.

\section{Multicontinuum splitting schemes}
\label{sec:schemes}

In this section, we introduce the splitting schemes based on multicontinuum homogenization and derive the stability results.

We denote the regular coarse mesh as $\mathcal{T}_H$, the coarse block with center $x_l$ as $K_{H}(x_l)$, the collection of their centers as $I_H$, and the number of continua as $N$.
Following the idea of multicontinuum homogenization, we introduce the multicontinuum space
\begin{equation}
\begin{split}
V_{mc}=\left\{ \sum_{x_{l} \in I_H} \mathds{1}_{K_{H}(x_l)} \sum_{i=1}^{N}  \left( \phi_{i}^{K_{H}(x_l)} U_{i} + \tilde{\phi}_{i}^{K_{H}(x_l)}\cdot\nabla U_{i} \right): \; U_i \in \mathcal{F}(\Omega) \right\} = \oplus_{i=1}^N \tilde{V_{i}},
\end{split}
\end{equation}
where $\phi_{i}^{K_{H}(x_l)}$ represents the constant in the average behavior of continuum $i$ for each coarse block $K_{H}(x_l)$ as defined in (\ref{eq:mc_cell_problem_1}), $\tilde{\phi}_{i}^{K_{H}(x_l)} = (\phi_{i}^{K_{H}(x_l),m})_m$ represents the linear functions as defined in (\ref{eq:mc_cell_problem_2}), and $\mathcal{F}(\Omega)$ is a function space defined on $\Omega$ with certain smoothness. Later for convenience, we may omit the superscript $K_{H}(x_l)$ when the context is clear. We remark that one of the main assumptions of multicontinuum homogenization is that multicontinuum variables $U_i$ are smooth over all coarse blocks. Under some conditions as specified in \cite{leung2024some}, we can choose $\mathcal{F}=H_{0}^{1+\alpha}(\Omega)$ for some $\alpha \in (0,1]$, which can describe the smoothness and guarantee the convergence of the method. Besides, the $x_l$ in the definition is discrete; for the continuous version, refer to \cite{leung2024some}.

To distinguish the effects of constants and linear functions from each continuum $i$, we can define $\tilde{T}_{i} = \tilde{T}_{i,0}+\tilde{T}_{i,1}: \; \mathcal{F} \rightarrow \tilde{V}_{i}$ by
\begin{equation}
\begin{split}
\tilde{T}_{i,0}(U_i) = \sum_{x_{l} \in I_H} \mathds{1}_{K_{H}(x_l)} \phi_{i}^{K_{H}(x_l)} U_{i},\\
\tilde{T}_{i,1}(U_i) = \sum_{x_{l} \in I_H} \mathds{1}_{K_{H}(x_l)} \tilde{\phi}_{i}^{K_{H}(x_l)} \cdot \nabla U_{i},
\end{split}
\end{equation}
and therefore $\tilde{V}_{i} = \tilde{T}_{i} (\mathcal{F})$.

We consider the following variational form: find $u(t,\cdot) \in V_{mc}$ such that 
\begin{equation}
\label{eq:mc_variational}
(\partial_{t}u,v)+a(u,v)=(f,v), \quad \forall v \in V_{mc}.
\end{equation}
In order to get a partially explicit time discretization scheme, we want to decompose $V_{mc}$ into two subspaces and treat one of them explicitly.
Let $I_1$ and $I_2$ be two disjoint sets whose union is the index set for continua, \{1,2,\ldots,N\}, and we have $V_{mc}=V_1+V_2$, where
\begin{equation}
\label{eq:first_decomposition}
\begin{split}
V_j = \left\{ \sum_{x_{l} \in I_H} \mathds{1}_{K_{H}(x_l)} \sum_{i\in I_{j}} \left( \phi_{i}^{K_{H}(x_l)} U_{i} + \tilde{\phi}_{i}^{K_{H}(x_l)}\cdot\nabla U_{i} \right): \; U_i \in \mathcal{F}(\Omega) \right\}.
\end{split}
\end{equation}
Then (\ref{eq:mc_variational}) is equivalent to considering $u_1 \in V_1$ and $u_2 \in V_2$ satisfying
\begin{equation}
\label{eq:mc_variational_split_1}
\begin{split}
(\partial_{t}(u_{1}+u_{2}),v_{1})+a(u_{1},v_{1})+a(u_{2},v_{1}) & =(f,v_{1}), \quad \forall v_1\in V_{1},\\
(\partial_{t}(u_{1}+u_{2}),v_{2})+a(u_{1},v_{2})+a(u_{2},v_{2}) & =(f,v_{2}), \quad \forall v_2\in V_{2}.
\end{split}
\end{equation}
Moreover, we define $T_{j} = T_{j,0} + T_{j,1}: \; \prod_{i\in I_j} \mathcal{F} \rightarrow V_j$, for $j=1,2$, by
\begin{equation}
T_{j,k}(U)= T_{j,k}((U_i)_{i\in I_{j}}) = \sum_{i\in I_{j}}\tilde{T}_{i,k}(U_{i}),
\end{equation}
and define the bilinear forms $m_{ij}(\cdot,\cdot)$, $a_{ij}(\cdot,\cdot)$, $b_{ij}(\cdot,\cdot)$ and $c_{ij}(\cdot,\cdot)$ by
\begin{equation}
\begin{split}
m_{ij}(U,W) =(T_{j}U,T_{i}W), \quad a_{ij}(U,W) =a(T_{j,1}U,T_{i,1}W), \\
b_{ij}(U,W) =a(T_{j,1}U,T_{i,0}W), \quad c_{ij}(U,W) =a(T_{j,0}U,T_{i,0}W),
\end{split}
\end{equation}
for any $U \in \prod_{k\in I_j} \mathcal{F}$ and $W \in \prod_{k\in I_i} \mathcal{F}$.
With these notations, we can rewrite (\ref{eq:mc_variational_split_1}) as finding $U_1 \in \prod_{k\in I_1} \mathcal{F}$ and $U_2 \in \prod_{k\in I_2} \mathcal{F}$ such that
\begin{equation}
\begin{split}
(\partial_{t}(T_{1}U_{1}+T_{2}U_{2}),T_{1}W_{1})+a(T_{1}U_{1},T_{1}W_{1})+a(T_{2}U_{2},T_{1}W_{1}) & =(f,T_{1}W_{1}),\quad \forall T_{1}W_{1} \in V_{1},\\
(\partial_{t}(T_{1}U_{1}+T_{2}U_{2}),T_{2}W_{2})+a(T_{1}U_{1},T_{2}W_{2})+a(T_{2}U_{2},T_{2}W_{2}) & =(f,T_{2}W_{2}),\quad \forall T_{2}W_{2} \in V_{2},
\end{split}
\end{equation}
or for $i=1,2$,
\begin{equation}
\begin{split}
\sum_{j=1}^{2}m_{ij}(\partial_{t}U_{j},W_{i})+\sum_{j=1}^{2}a_{ij}(U_{j},W_{i})\\
+\sum_{j=1}^{2}b_{ij}(U_{j},W_{i})+\sum_{j=1}^{2}b_{ji}(W_{i},U_{j})+\sum_{j=1}^{2}c_{ij}(U_{j},W_{i}) & =(f,T_{i}W_{i}),\quad\forall W_{i}\in \prod_{k\in I_i} \mathcal{F}.
\end{split}
\end{equation}
To simplify it, we note that $b_{ij}(U_{j},W_{i})+b_{ji}(W_{i},U_{j})=0$ by integration by parts \cite{efendiev2023multicontinuum}, and we write $(f,T_{i}W_{i})=(f_{i},W_{i})$.

Now we can select finite dimensional space $V_{i,H} \subset \prod_{k\in I_i} \mathcal{F}$ $(i=1,2)$, like Lagrange finite element space, and design the following partially explicit time discretization schemes:

\textbf{Discretization scheme 1:} finding $U_{1}^{n+1}\in V_{1,H}$ and $U_{2}^{n+1}\in V_{2,H}$ such that
\begin{equation}
\label{eq:Scheme_1}
\begin{split}
m_{11}(\cfrac{U_{1}^{n+1}-U_{1}^{n}}{\tau},W_{1})+ m_{12}(\cfrac{U_{2}^{n}-U_{2}^{n-1}}{\tau},W_{1}) +a_{11}(U_{1}^{n+1},W_{1})+a_{12}(U_{2}^{n},W_{1})\\
+c_{11}(U_{1}^{n+1},W_{1})+c_{12}(U_{2}^{n+1},W_{1}) & =(f_{1},W_{1}),\quad \forall W_{1}\in V_{1,H},\\
m_{22}(\cfrac{U_{2}^{n+1}-U_{2}^{n}}{\tau},W_{2})+m_{21}(\cfrac{U_{1}^{n}-U_{1}^{n-1}}{\tau},W_{2})+a_{21}(U_{1}^{n+1},W_{2})+a_{22}(U_{2}^{n},W_{2})\\
+c_{21}(U_{1}^{n+1},W_{2})+c_{22}(U_{2}^{n+1},W_{2}) & =(f_{2},W_{2}),\quad \forall W_{2}\in V_{2,H}.
\end{split}
\end{equation}

\textbf{Discretization scheme 2:} finding $U_{1}^{n+1}\in V_{1,H}$ and $U_{2}^{n+1}\in V_{2,H}$ such that
\begin{equation}
\label{eq:Scheme_2}
\begin{split}
m_{11}(\cfrac{U_{1}^{n+1}-U_{1}^{n}}{\tau},W_{1})+ m_{12}(\cfrac{U_{2}^{n}-U_{2}^{n-1}}{\tau},W_{1}) +a_{11}(U_{1}^{n+1},W_{1})+a_{12}(U_{2}^{n},W_{1})\\
+c_{11}(U_{1}^{n+1},W_{1})+c_{12}(U_{2}^{n},W_{1}) & =(f_{1},W_{1}),\quad \forall W_{1}\in V_{1,H},\\
m_{22}(\cfrac{U_{2}^{n+1}-U_{2}^{n}}{\tau},W_{2})+m_{21}(\cfrac{U_{1}^{n}-U_{1}^{n-1}}{\tau},W_{2})+a_{21}(U_{1}^{n+1},W_{2})+a_{22}(U_{2}^{n},W_{2})\\
+c_{21}(U_{1}^{n+1},W_{2})+c_{22}(U_{2}^{n},W_{2}) & =(f_{2},W_{2}),\quad \forall W_{2}\in V_{2,H}.
\end{split}
\end{equation}
The initial conditions are projected into the corresponding finite element spaces.

The idea of these discretization schemes is to treat the component of solution in $V_{1,H}$, or say $T_1(V_{1,H})$, implicitly, and treat the component in $V_{2,H}$, or say $T_2(V_{2,H})$, explicitly to enhance computational efficiency. Intuitively, it is desirable for $T_1(V_{1,H})$ to include the fast dynamics (contrast-dependent modes) and for $T_2(V_{2,H})$ to include the slow dynamics (contrast-independent modes). We expect that the stability conditions for the partially explicit schemes are only dependent on the explicit component. 
The two schemes differ in how the $c(\cdot,\cdot)$ term is handled. In discretization scheme 1, we use implicit scheme for both $V_{1, H}$ and $V_{2, H}$ in the $c(\cdot,\cdot)$ term; while in discretization scheme 2, we use implicit scheme for $V_{1, H}$ and explicit scheme for $V_{2, H}$ in the $c(\cdot,\cdot)$ term. We remark that the $c(\cdot,\cdot)$ corresponds to the reaction term in the partial differential equation, and that term can be large compared to the mesh size if the continua are defined in regions with sizes much smaller than the mesh size. In this case, discretization scheme 1 can give us a more stable scheme, and since $c(\cdot,\cdot)$ corresponds to the reaction term, it makes the corresponding matrix a mass matrix-like matrix with a conditional number independent of $H$.

To prove the stability results, we need to utilize the strengthened Cauchy-Schwarz inequality \cite{aldaz2009strengthened}.

\begin{lem}[Strengthened Cauchy-Schwarz inequality]
Let $H$ be a Hilbert space and $F_1,F_2 \subset H$ be two finite dimensional subspaces. If $F_1 \cap F_2 = \{0\}$, then there exists a constant $\gamma=\gamma(F_1,F_2)\in [0,1)$ such that 
\[|(x,y)| \leq \gamma \|x\| \|y\|,\]
for all $x \in F_1$ and $y \in F_2$.
\end{lem}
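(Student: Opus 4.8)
The plan is to establish the strengthened Cauchy--Schwarz inequality as a statement about the angle between two finite-dimensional subspaces of a Hilbert space, using a compactness argument. First I would reduce to the essential case by noting that if either $F_1$ or $F_2$ is trivial the claim holds with $\gamma = 0$, so assume both are nontrivial. The key quantity is
\[
\gamma := \sup\left\{ |(x,y)| : x \in F_1,\ y \in F_2,\ \|x\| = \|y\| = 1 \right\},
\]
and the inequality $|(x,y)| \le \gamma \|x\|\|y\|$ for all $x \in F_1$, $y \in F_2$ then follows immediately by homogeneity (scaling $x$ and $y$ to unit norm, or handling the zero case trivially). The Cauchy--Schwarz inequality gives $\gamma \le 1$ for free, so the entire content of the lemma is the strict inequality $\gamma < 1$.

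To prove $\gamma < 1$, I would argue by contradiction: suppose $\gamma = 1$. The set $S_1 \times S_2$, where $S_j = \{ x \in F_j : \|x\| = 1\}$ is the unit sphere of $F_j$, is compact because $F_1$ and $F_2$ are finite-dimensional (closed and bounded subsets of finite-dimensional normed spaces are compact). The function $(x,y) \mapsto |(x,y)|$ is continuous, so it attains its supremum on this compact set: there exist $x_0 \in S_1$, $y_0 \in S_2$ with $|(x_0, y_0)| = \gamma = 1$. By replacing $y_0$ with $-y_0$ if necessary (or in the complex case multiplying by a unimodular scalar), we may assume $(x_0, y_0) = 1 = \|x_0\|\,\|y_0\|$. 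The equality case of the Cauchy--Schwarz inequality then forces $x_0$ and $y_0$ to be linearly dependent, i.e. $y_0 = \lambda x_0$ for some scalar $\lambda$ with $|\lambda| = 1$; hence $x_0 \in F_1 \cap F_2$. But $x_0 \ne 0$ since $\|x_0\| = 1$, contradicting the hypothesis $F_1 \cap F_2 = \{0\}$. Therefore $\gamma < 1$, and since $\gamma \ge 0$ trivially, $\gamma \in [0,1)$ as claimed. The constant manifestly depends only on $F_1$ and $F_2$.

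The main subtlety — really the only place care is needed — is the equality case of Cauchy--Schwarz: one must remember that $|(x,y)| = \|x\|\|y\|$ holds if and only if $x$ and $y$ are linearly dependent, which is exactly what produces a nonzero element of the intersection. Everything else is routine: finite-dimensionality supplies compactness of the spheres, continuity of the inner product supplies attainment of the supremum, and homogeneity reduces the general inequality to the normalized one. One small bookkeeping point is to phrase the argument so it covers both the real and complex Hilbert space cases; taking absolute values throughout and, if desired, rotating $y_0$ by a phase handles this uniformly. No deeper ideas are required, so I would keep the write-up short.
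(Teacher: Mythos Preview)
Your proof is correct and complete: the compactness argument on the product of unit spheres, together with the equality case of Cauchy--Schwarz, is the standard and cleanest route to this result. Note, however, that the paper does not actually give its own proof of this lemma; it is stated with a citation to the literature and then used as a tool in the stability analysis, so there is no in-paper argument to compare against. Your write-up supplies exactly the short, self-contained justification the paper omits.
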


Since $V_1 \cap V_2 = \{0\}$, and $T_1(V_{1,H}) \subset V_1 \subset L^2$ and $T_2(V_{2,H}) \subset V_2 \subset L^2$ are of finite dimensions, we can define a constant $\gamma \in (0,1)$, depending on $T_1(V_{1,H})$ and $T_2(V_{2,H})$, by
\begin{equation}
\gamma = \sup_{U_1 \in V_{1,H}, U_2 \in V_{2,H}} \dfrac{(T_1 U_1, T_2 U_2)}{\|T_1 U_1\| \|T_2 U_2\|},
\end{equation} 
and therefore
\begin{equation}
\begin{split}
m_{12}(U_2,U_1) = m_{21}(U_1,U_2) =  (T_1 U_1, T_2 U_2) \leq \gamma \|T_1 U_1\| \|T_2 U_2\| = \gamma \|U_1\|_{m_{11}} \|U_2\|_{m_{22}}.
\end{split}
\end{equation}
We consider the case when $f_1=f_2=0$ to facilitate the analysis.

\begin{thm}
For the discretization scheme 1 in (\ref{eq:Scheme_1}), we have the following stability result
\begin{equation}
\begin{split}
    &\dfrac{\gamma^2}{\tau} \sum_{i=1}^{2} \| U_{i}^{n+1}-U_{i}^{n} \|_{m_{ii}}^2 + \| (U_{1}^{n+1},U_{2}^{n+1}) \|_{a}^2 + \| (U_{1}^{n+1},U_{2}^{n+1}) \|_{c}^2 \\
    \leq
    &\dfrac{\gamma^2}{\tau} \sum_{i=1}^{2} \| U_{i}^{n}-U_{i}^{n-1} \|_{m_{ii}}^2 + \| (U_{1}^{n},U_{2}^{n}) \|_{a}^2 + \| (U_{1}^{n},U_{2}^{n}) \|_{c}^2 ,
\end{split}
\end{equation}
if the stability condition
\begin{equation}
\label{eq:Scheme_1_stability}
    \tau \leq (1-\gamma^2) \inf_{W\in V_{2,H}}\cfrac{\|W\|_{m_{22}}^{2}}{\|W\|_{a_{22}}^{2}}
\end{equation}
is satisfied, where
\begin{equation}
\begin{split}
    \|(U_{1},U_{2})\|_{a}^{2}:=\sum_{ij} a_{ij} (U_{j},U_{i})=a(\sum_{i}T_{i,1}U_{i},\sum_{i}T_{i,1}U_{i}),\\
    \|(U_{1},U_{2})\|_{c}^{2}:=\sum_{ij} c_{ij} (U_{j},U_{i})=a(\sum_{i}T_{i,0}U_{i},\sum_{i}T_{i,0}U_{i}).
\end{split}
\end{equation}
\end{thm}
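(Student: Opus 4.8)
The plan is a discrete energy argument. Test the first equation of (\ref{eq:Scheme_1}) with $W_{1}=U_{1}^{n+1}-U_{1}^{n}$ and the second with $W_{2}=U_{2}^{n+1}-U_{2}^{n}$, add the two, and use $f_{1}=f_{2}=0$. It is convenient to abbreviate $d_{i}^{k}:=U_{i}^{k}-U_{i}^{k-1}$ and $w_{i}^{k}:=T_{i}d_{i}^{k}\in V_{i}$, and to recall the polarization identity $B(x,x-y)=\tfrac12\bigl(B(x,x)-B(y,y)+B(x-y,x-y)\bigr)$, valid for any symmetric bilinear form $B$.

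Then I would process the three groups of terms. Since scheme 1 treats $c(\cdot,\cdot)$ fully implicitly, the four $c$-contributions assemble, by bilinearity and polarization, into $\tfrac12\bigl(\|(U_{1}^{n+1},U_{2}^{n+1})\|_{c}^{2}-\|(U_{1}^{n},U_{2}^{n})\|_{c}^{2}+\|(d_{1}^{n+1},d_{2}^{n+1})\|_{c}^{2}\bigr)$. For the $a$-terms the first argument is $U_{1}^{n+1}$ when $j=1$ but the \emph{old} iterate $U_{2}^{n}$ when $j=2$, so the four contributions assemble into $a(r,\,s^{n+1}-s^{n})$ with the mixed state $r:=T_{1,1}U_{1}^{n+1}+T_{2,1}U_{2}^{n}$ and $s^{k}:=T_{1,1}U_{1}^{k}+T_{2,1}U_{2}^{k}$; writing $s^{n+1}-s^{n}=(r-s^{n})+(s^{n+1}-r)$ with $r-s^{n}=T_{1,1}d_{1}^{n+1}$ and $s^{n+1}-r=T_{2,1}d_{2}^{n+1}$ and applying polarization to each half gives $\tfrac12\bigl(\|(U_{1}^{n+1},U_{2}^{n+1})\|_{a}^{2}-\|(U_{1}^{n},U_{2}^{n})\|_{a}^{2}+\|d_{1}^{n+1}\|_{a_{11}}^{2}-\|d_{2}^{n+1}\|_{a_{22}}^{2}\bigr)$; the crucial feature is the minus sign on $\|d_{2}^{n+1}\|_{a_{22}}^{2}$, which is exactly what the stability condition must absorb. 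The diagonal $m_{11},m_{22}$ contributions give $\tfrac1\tau\bigl(\|d_{1}^{n+1}\|_{m_{11}}^{2}+\|d_{2}^{n+1}\|_{m_{22}}^{2}\bigr)$, and the off-diagonal $m_{12},m_{21}$ contributions, which by construction of the scheme carry the \emph{lagged} increments, give the mixed-time products $\tfrac1\tau(w_{1}^{n+1},w_{2}^{n})$ and $\tfrac1\tau(w_{1}^{n},w_{2}^{n+1})$.

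Collecting everything and multiplying by $2$ produces an exact identity. I would then bound the mixed-time mass products via the strengthened Cauchy--Schwarz inequality, $|(w_{1}^{k},w_{2}^{\ell})|\le\gamma\|d_{1}^{k}\|_{m_{11}}\|d_{2}^{\ell}\|_{m_{22}}$, followed by Young's inequality with the weight $1/\gamma$: this is calibrated so that $-\tfrac2\tau(w_{1}^{n+1},w_{2}^{n})\le\tfrac1\tau\|d_{1}^{n+1}\|_{m_{11}}^{2}+\tfrac{\gamma^{2}}{\tau}\|d_{2}^{n}\|_{m_{22}}^{2}$ and, symmetrically, $-\tfrac2\tau(w_{1}^{n},w_{2}^{n+1})\le\tfrac1\tau\|d_{2}^{n+1}\|_{m_{22}}^{2}+\tfrac{\gamma^{2}}{\tau}\|d_{1}^{n}\|_{m_{11}}^{2}$, so the lagged terms pick up exactly the coefficient $\gamma^{2}/\tau$ appearing on the right-hand side of the claim, while the current increments keep coefficient $1/\tau$. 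Next, absorb $\|d_{2}^{n+1}\|_{a_{22}}^{2}\le\tfrac{1-\gamma^{2}}{\tau}\|d_{2}^{n+1}\|_{m_{22}}^{2}$ using the stability condition (\ref{eq:Scheme_1_stability}) applied to $d_{2}^{n+1}\in V_{2,H}$; this trims the coefficient of $\|d_{2}^{n+1}\|_{m_{22}}^{2}$ on the left from $1/\tau$ down to exactly $\gamma^{2}/\tau$. Finally, discard the nonnegative leftovers $\|d_{1}^{n+1}\|_{a_{11}}^{2}$ and $\|(d_{1}^{n+1},d_{2}^{n+1})\|_{c}^{2}$ and use $\gamma^{2}\le1$ to weaken $\tfrac1\tau\|d_{1}^{n+1}\|_{m_{11}}^{2}$ to $\tfrac{\gamma^{2}}{\tau}\|d_{1}^{n+1}\|_{m_{11}}^{2}$ on the left; this yields precisely the stated monotone energy inequality.

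I expect the main obstacle to be this last calibration: the Young weight $1/\gamma$, the portion of the $\|d_{2}^{n+1}\|_{m_{22}}^{2}$ budget retained on the left, and the factor $1-\gamma^{2}$ in (\ref{eq:Scheme_1_stability}) must all fit together so that, after absorbing $\|d_{2}^{n+1}\|_{a_{22}}^{2}$, exactly $\gamma^{2}/\tau$ survives; any cruder estimate loses the sharp constant. The other delicate point is the sign bookkeeping in the $a$-term telescoping through the mixed state $r$. The polarization identities and the discarding of positive terms are routine.
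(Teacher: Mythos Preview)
Your proof is correct and follows essentially the same energy argument as the paper: test with the increments, bound the lagged mass cross-terms by the strengthened Cauchy--Schwarz inequality plus Young, and absorb $\|d_2^{n+1}\|_{a_{22}}^2$ using the stability condition. Your two-step polarization of the $a$-terms through the mixed state $r$ (yielding the exact identity $\tfrac12\bigl(\|(U_1^{n+1},U_2^{n+1})\|_a^2-\|(U_1^{n},U_2^{n})\|_a^2+\|d_1^{n+1}\|_{a_{11}}^2-\|d_2^{n+1}\|_{a_{22}}^2\bigr)$) is slightly sharper than the paper's version, which instead writes $a(r,s^{n+1}-s^n)=a(s^{n+1},s^{n+1}-s^n)-a(T_{2,1}d_2^{n+1},s^{n+1}-s^n)$ and bounds the correction by Cauchy--Schwarz, but both routes reach the same final inequality after discarding the positive leftovers.
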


\begin{proof}

We consider $W_1 = U_{1}^{n+1}-U_{1}^{n}$ and $W_2 = U_{2}^{n+1}-U_{2}^{n}$ in (\ref{eq:Scheme_1}) and obtain
\begin{equation}
\label{eq:Thm_1_main_eqs}
\begin{split}
\dfrac{1}{\tau} m_{11}(U_{1}^{n+1}-U_{1}^{n},U_{1}^{n+1}-U_{1}^{n})+ \dfrac{1}{\tau} m_{12}(U_{2}^{n}-U_{2}^{n-1},U_{1}^{n+1}-U_{1}^{n}) + a_{11}(U_{1}^{n+1},U_{1}^{n+1}-U_{1}^{n})+a_{12}(U_{2}^{n},U_{1}^{n+1}-U_{1}^{n})\\
+c_{11}(U_{1}^{n+1},U_{1}^{n+1}-U_{1}^{n})+c_{12}(U_{2}^{n+1},U_{1}^{n+1}-U_{1}^{n}) & = 0,\\
\dfrac{1}{\tau}m_{22}(U_{2}^{n+1}-U_{2}^{n},U_{2}^{n+1}-U_{2}^{n}) + \dfrac{1}{\tau} m_{21}(U_{1}^{n}-U_{1}^{n-1},U_{2}^{n+1}-U_{2}^{n}) + a_{21}(U_{1}^{n+1},U_{2}^{n+1}-U_{2}^{n})+a_{22}(U_{2}^{n},U_{2}^{n+1}-U_{2}^{n})\\
+c_{21}(U_{1}^{n+1},U_{2}^{n+1}-U_{2}^{n})+c_{22}(U_{2}^{n+1},U_{2}^{n+1}-U_{2}^{n}) & = 0.
\end{split}
\end{equation}

Adding up the first two terms in the above two equations and using the strengthened Cauchy-Schwarz inequality gives us
\begin{equation}
\begin{split}
&\dfrac{1}{\tau} \|U_{1}^{n+1}-U_{1}^{n}\|_{m_{11}}^2 + \dfrac{1}{\tau} m_{12}(U_{2}^{n}-U_{2}^{n-1},U_{1}^{n+1}-U_{1}^{n})
+ \dfrac{1}{\tau} \|U_{2}^{n+1}-U_{2}^{n}\|_{m_{22}}^2 + \dfrac{1}{\tau} m_{21}(U_{1}^{n}-U_{1}^{n-1},U_{2}^{n+1}-U_{2}^{n}) \\
\geq
&\dfrac{1}{\tau} \left( 
\|U_{1}^{n+1}-U_{1}^{n}\|_{m_{11}}^2 - \gamma \| U_{1}^{n+1}-U_{1}^{n} \|_{m_{11}} \| U_{2}^{n}-U_{2}^{n-1} \|_{m_{22}} 
+ \|U_{2}^{n+1}-U_{2}^{n}\|_{m_{22}}^2 - \gamma \| U_{1}^{n}-U_{1}^{n-1} \|_{m_{11}} \| U_{2}^{n+1}-U_{2}^{n} \|_{m_{22}}
\right) \\
\geq
&\dfrac{1}{2\tau} \left( 
\|U_{1}^{n+1}-U_{1}^{n}\|_{m_{11}}^2 - \gamma^2 \| U_{2}^{n}-U_{2}^{n-1} \|_{m_{22}}^2
+ \|U_{2}^{n+1}-U_{2}^{n}\|_{m_{22}}^2 - \gamma^2 \| U_{1}^{n}-U_{1}^{n-1} \|_{m_{11}}^2
\right).
\end{split}
\end{equation}

To estimate the summation over the terms about $a_{ij}$, we have
\begin{equation}
\label{eq:Thm_1_sum_a_ij}
\begin{split}
&a_{11}(U_{1}^{n+1},U_{1}^{n+1}-U_{1}^{n}) + a_{12}(U_{2}^{n},U_{1}^{n+1}-U_{1}^{n}) + a_{21}(U_{1}^{n+1},U_{2}^{n+1}-U_{2}^{n})+a_{22}(U_{2}^{n},U_{2}^{n+1}-U_{2}^{n}) \\
= &a( T_{1,1}U_{1}^{n+1} + T_{2,1}U_{2}^{n}, \; T_{1,1}(U_{1}^{n+1}-U_{1}^{n}) + T_{2,1}(U_{2}^{n+1}-U_{2}^{n}) ) \\
= &a( T_{1,1}U_{1}^{n+1} + T_{2,1}U_{2}^{n+1}, \; T_{1,1}(U_{1}^{n+1}-U_{1}^{n}) + T_{2,1}(U_{2}^{n+1}-U_{2}^{n}) ) \\ 
& - a( T_{2,1}U_{2}^{n+1} - T_{2,1}U_{2}^{n}, \; T_{1,1}(U_{1}^{n+1}-U_{1}^{n}) + T_{2,1}(U_{2}^{n+1}-U_{2}^{n}) ).
\end{split}
\end{equation}
It can be noticed that for the first term, 
\begin{equation}
\begin{split}
&a( T_{1,1}U_{1}^{n+1} + T_{2,1}U_{2}^{n+1}, \; T_{1,1}(U_{1}^{n+1}-U_{1}^{n}) + T_{2,1}(U_{2}^{n+1}-U_{2}^{n}) ) \\
= &a(T_{1,1}U_{1}^{n+1} + T_{2,1}U_{2}^{n+1}, \; T_{1,1}U_{1}^{n+1} + T_{2,1}U_{2}^{n+1}) - a(T_{1,1}U_{1}^{n+1} + T_{2,1}U_{2}^{n+1}, \; T_{1,1}U_{1}^{n} + T_{2,1}U_{2}^{n}) \\
= &\| (U_{1}^{n+1},U_{2}^{n+1}) \|_{a}^2 - \dfrac{1}{2} \left( \| (U_{1}^{n+1},U_{2}^{n+1}) \|_{a}^2 + \| (U_{1}^{n},U_{2}^{n}) \|_{a}^2 - \| (U_{1}^{n+1}-U_{1}^{n} ,U_{2}^{n+1}-U_{2}^{n}) \|_{a}^2 \right) \\
= & \dfrac{1}{2} \left( \| (U_{1}^{n+1},U_{2}^{n+1}) \|_{a}^2 +  \| (U_{1}^{n+1}-U_{1}^{n} ,U_{2}^{n+1}-U_{2}^{n}) \|_{a}^2 -  \| (U_{1}^{n},U_{2}^{n}) \|_{a}^2 \right),
\end{split}
\end{equation}
and for the second term,
\begin{equation}
\begin{split}
& a( T_{2,1}U_{2}^{n+1} - T_{2,1}U_{2}^{n}, \; T_{1,1}(U_{1}^{n+1}-U_{1}^{n}) + T_{2,1}(U_{2}^{n+1}-U_{2}^{n}) ) \\
\leq & \frac{1}{2} \left(  \| U_{2}^{n+1} - U_{2}^{n} \|_{a_{22}}^2 + \| (U_{1}^{n+1}-U_{1}^{n} ,U_{2}^{n+1}-U_{2}^{n}) \|_{a}^2 \right).
\end{split}
\end{equation}
Substituting the estimates back into (\ref{eq:Thm_1_sum_a_ij}), one can obtain
\begin{equation}
\begin{split}
&a_{11}(U_{1}^{n+1},U_{1}^{n+1}-U_{1}^{n}) + a_{12}(U_{2}^{n},U_{1}^{n+1}-U_{1}^{n}) + a_{21}(U_{1}^{n+1},U_{2}^{n+1}-U_{2}^{n})+a_{22}(U_{2}^{n},U_{2}^{n+1}-U_{2}^{n}) \\
\geq & \dfrac{1}{2} \left( \| (U_{1}^{n+1},U_{2}^{n+1}) \|_{a}^2 -  \| (U_{1}^{n},U_{2}^{n}) \|_{a}^2 - \| U_{2}^{n+1} - U_{2}^{n} \|_{a_{22}}^2 \right).
\end{split}
\end{equation}

Similarly, we can also get a lower bound for the summation over $c_{ij}$
\begin{equation}
\begin{split}
&c_{11}(U_{1}^{n+1},U_{1}^{n+1}-U_{1}^{n})+c_{12}(U_{2}^{n+1},U_{1}^{n+1}-U_{1}^{n}) + c_{21}(U_{1}^{n+1},U_{2}^{n+1}-U_{2}^{n})+c_{22}(U_{2}^{n+1},U_{2}^{n+1}-U_{2}^{n}) \\
= &a( T_{1,0}U_{1}^{n+1} + T_{2,0}U_{2}^{n+1}, \; T_{1,0}(U_{1}^{n+1}-U_{1}^{n}) + T_{2,0}(U_{2}^{n+1}-U_{2}^{n}) ) \\
= &a( T_{1,0}U_{1}^{n+1} + T_{2,0}U_{2}^{n+1}, \; T_{1,0}U_{1}^{n+1} + T_{2,0}U_{2}^{n+1} ) - a( T_{1,0}U_{1}^{n+1} + T_{2,0}U_{2}^{n+1}, \; T_{1,0}U_{1}^{n} + T_{2,0}U_{2}^{n} ) \\
\geq &\dfrac{1}{2} \left( \| (U_{1}^{n+1},U_{2}^{n+1}) \|_{c}^2 -  \| (U_{1}^{n},U_{2}^{n}) \|_{c}^2 \right).
\end{split}
\end{equation}
Therefore, we can have, by combining the above inequalities with (\ref{eq:Thm_1_main_eqs}),
\begin{equation}
\begin{split}
&\dfrac{1}{\tau} \sum_{i=1}^{2} \| U_{i}^{n+1}-U_{i}^{n} \|_{m_{ii}}^2 + \| (U_{1}^{n+1},U_{2}^{n+1}) \|_{a}^2 + \| (U_{1}^{n+1},U_{2}^{n+1}) \|_{c}^2 \\
\leq
&\dfrac{1}{\tau}\gamma^2 \sum_{i=1}^{2} \| U_{i}^{n}-U_{i}^{n-1} \|_{m_{ii}}^2 + \| (U_{1}^{n},U_{2}^{n}) \|_{a}^2 + \| U_{2}^{n+1} - U_{2}^{n} \|_{a_{22}}^2 + \| (U_{1}^{n},U_{2}^{n}) \|_{c}^2.
\end{split}
\end{equation}
If $\tau \leq (1-\gamma^2) \inf_{W\in V_{2,H}}\cfrac{\|W\|_{m_{22}}^{2}}{\|W\|_{a_{22}}^{2}}$, then 
\begin{equation}
\begin{split}
\dfrac{1}{\tau} (1-\gamma^2) \sum_{i=1}^{2} \| U_{i}^{n+1}-U_{i}^{n} \|_{m_{ii}}^2
\geq \| U_{2}^{n+1} - U_{2}^{n} \|_{a_{22}}^2,
\end{split}    
\end{equation}
and thus
\begin{equation}
\begin{split}
    &\dfrac{\gamma^2}{\tau} \sum_{i=1}^{2} \| U_{i}^{n+1}-U_{i}^{n} \|_{m_{ii}}^2 + \| (U_{1}^{n+1},U_{2}^{n+1}) \|_{a}^2 + \| (U_{1}^{n+1},U_{2}^{n+1}) \|_{c}^2 \\
    \leq
    &\dfrac{\gamma^2}{\tau} \sum_{i=1}^{2} \| U_{i}^{n}-U_{i}^{n-1} \|_{m_{ii}}^2 + \| (U_{1}^{n},U_{2}^{n}) \|_{a}^2 + \| (U_{1}^{n},U_{2}^{n}) \|_{c}^2.
\end{split}
\end{equation}
\end{proof}

Similarly, we can also prove the stability result for the discretization scheme 2.
\begin{thm}
For the discretization scheme 2 in (\ref{eq:Scheme_2}), we have the following stability result
\begin{equation}
\begin{split}
    &\dfrac{\gamma^2}{\tau} \sum_{i=1}^{2} \| U_{i}^{n+1}-U_{i}^{n} \|_{m_{ii}}^2 + \| (U_{1}^{n+1},U_{2}^{n+1}) \|_{a}^2 + \| (U_{1}^{n+1},U_{2}^{n+1}) \|_{c}^2 \\
    \leq
    &\dfrac{\gamma^2}{\tau} \sum_{i=1}^{2} \| U_{i}^{n}-U_{i}^{n-1} \|_{m_{ii}}^2 + \| (U_{1}^{n},U_{2}^{n}) \|_{a}^2 + \| (U_{1}^{n},U_{2}^{n}) \|_{c}^2 ,
\end{split}
\end{equation}
if the stability condition
\begin{equation}
\label{eq:Scheme_2_stability}
    \tau \leq (1-\gamma^2) \inf_{W\in V_{2,H}}\cfrac{\|W\|_{m_{22}}^{2}}{\|W\|_{a_{22}}^{2}+\|W\|_{c_{22}}^{2}}
\end{equation}
is satisfied.
\end{thm}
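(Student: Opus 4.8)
The plan is to mirror the proof of Theorem for Discretization scheme 1, adjusting only the estimate for the summation over the $c_{ij}$ terms, since the two schemes differ solely in whether the $V_{2,H}$ part of the $c(\cdot,\cdot)$ term is taken implicitly or explicitly. First I would test equation (\ref{eq:Scheme_2}) with $W_1 = U_1^{n+1}-U_1^n$ and $W_2 = U_2^{n+1}-U_2^n$, obtaining the analogue of (\ref{eq:Thm_1_main_eqs}) but with $c_{12}(U_2^{n},\cdot)$ and $c_{22}(U_2^{n},\cdot)$ in place of $c_{12}(U_2^{n+1},\cdot)$ and $c_{22}(U_2^{n+1},\cdot)$.

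The treatment of the first two (mass) terms is identical: adding them across the two equations and applying the strengthened Cauchy--Schwarz inequality with constant $\gamma$, followed by Young's inequality, yields the lower bound $\tfrac{1}{2\tau}\bigl(\sum_i\|U_i^{n+1}-U_i^n\|_{m_{ii}}^2 - \gamma^2\sum_i\|U_i^{n}-U_i^{n-1}\|_{m_{ii}}^2\bigr)$, exactly as before. The $a_{ij}$ terms are also handled verbatim as in Theorem for scheme 1, giving the lower bound $\tfrac{1}{2}\bigl(\|(U_1^{n+1},U_2^{n+1})\|_a^2 - \|(U_1^{n},U_2^{n})\|_a^2 - \|U_2^{n+1}-U_2^n\|_{a_{22}}^2\bigr)$ via the splitting $T_{2,1}U_2^n = T_{2,1}U_2^{n+1} - T_{2,1}(U_2^{n+1}-U_2^n)$ and the polarization identity.

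The one genuinely new computation is the $c_{ij}$ sum. Here the combination is $a\bigl(T_{1,0}U_1^{n+1}+T_{2,0}U_2^{n},\; T_{1,0}(U_1^{n+1}-U_1^n)+T_{2,0}(U_2^{n+1}-U_2^n)\bigr)$, which is structurally the same object as the $a_{ij}$ sum but with $T_{\cdot,0}$ replacing $T_{\cdot,1}$ and $c_{22}$ replacing $a_{22}$. So I would repeat the same three-line manipulation: write $T_{2,0}U_2^n = T_{2,0}U_2^{n+1} - T_{2,0}(U_2^{n+1}-U_2^n)$, use polarization on the resulting full-time-level $c$-form, and bound the cross term by Young's inequality, arriving at the lower bound $\tfrac{1}{2}\bigl(\|(U_1^{n+1},U_2^{n+1})\|_c^2 - \|(U_1^{n},U_2^{n})\|_c^2 - \|U_2^{n+1}-U_2^n\|_{c_{22}}^2\bigr)$. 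Combining all three lower bounds with the tested equations produces, after multiplying through by $2$ and rearranging, the intermediate inequality with an extra $\|U_2^{n+1}-U_2^n\|_{a_{22}}^2 + \|U_2^{n+1}-U_2^n\|_{c_{22}}^2$ on the right-hand side; absorbing this into $\tfrac{1}{\tau}(1-\gamma^2)\sum_i\|U_i^{n+1}-U_i^n\|_{m_{ii}}^2 \geq \|U_2^{n+1}-U_2^n\|_{a_{22}}^2 + \|U_2^{n+1}-U_2^n\|_{c_{22}}^2$, which holds precisely under the stated condition (\ref{eq:Scheme_2_stability}) since $\|U_2^{n+1}-U_2^n\|_{m_{22}}^2 \le \sum_i\|U_i^{n+1}-U_i^n\|_{m_{ii}}^2$, closes the argument.

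I do not expect a real obstacle: the proof is essentially a transcription of the scheme-1 argument, and the only place requiring care is making sure the explicit $c_{22}(U_2^n,\cdot)$ term feeds exactly one extra $\|U_2^{n+1}-U_2^n\|_{c_{22}}^2$ penalty (paralleling the role of $\|U_2^{n+1}-U_2^n\|_{a_{22}}^2$ from the explicit $a_{22}$ term) so that both penalties are jointly controlled by the single mass term, which is what dictates the modified denominator $\|W\|_{a_{22}}^2 + \|W\|_{c_{22}}^2$ in the stability condition.
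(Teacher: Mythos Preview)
Your proposal is correct and follows exactly the approach the paper intends: the paper itself simply states that the result is proved ``similarly'' to the scheme-1 theorem, and your adaptation---treating the $c_{ij}$ sum in the same way as the $a_{ij}$ sum to produce the extra $\|U_2^{n+1}-U_2^n\|_{c_{22}}^2$ penalty, then absorbing both penalties via the modified stability condition---is precisely the natural extension.
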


Finally, we remark that for two-value fields, we typically choose $I_1$ and $I_2$ to be the index sets for the continua corresponding to the high-value and low-value regions respectively. In this case, the stability conditions of the aforementioned partially explicit schemes are expected to be independent of the contrast of $\kappa$, if its low value is fixed. 
Indeed, this follows from the construction of cell problems in multicontinuum homogenization and can be proved similarly to Lemma 4.1 in \cite{chung2021contrast}.
Recall that by (\ref{eq:mc_cell_problem_1}), the energy norm of $\phi_i$ is minimized under the constraint that it behaves as a constant in the continuum on average.
Therefore, $\phi_i$ is independent of the contrast of $\kappa$ and is constant in the high-value regions. 
However, since the constraint in (\ref{eq:mc_cell_problem_2}) requires $\phi_i^m$ to represent the linear function on average, it can be seen that $\phi_i^m$ for $i\in I_2$ remains independent of the contrast and is constant in the high-value regions, whereas $\phi_i^m$ for $i\in I_1$ does not exhibit this property.
As a consequence, the $m_{22}$-norm, the $c_{22}$-norm and the $a_{22}$-norm are all independent of the contrast, leading to contrast-independent stability conditions.
This result is confirmed by numerical examples. We also note that the stability conditions remain contrast-independent in more complex cases, provided the continua are chosen appropriately.

\section{Optimized decomposition of multicontinuum space}
\label{sec:construction}

In this section, we propose possible methods to reconstruct $V_1$ and $V_2$ such that the stability conditions are relaxed and their explicit forms can be given.
A Rayleigh quotient problem involving tensors is formulated and some simplifications are applied to facilitate the computation.

To begin with, given auxiliary functions $\psi_i$, the multiscale basis functions $\phi_i$ and $\tilde{\phi}_i$ can be obtained from cell problems (\ref{eq:mc_cell_problem_1}) and (\ref{eq:mc_cell_problem_2}). Compared with the previous decomposition (\ref{eq:first_decomposition}) in Section \ref{sec:schemes}, we set $I_1=\{i_0+1,i_0+2,\ldots,N\}$ and $I_2=\{1,2,\ldots,i_0\}$, mix the multiscale basis functions, and redefine
\begin{equation}
\label{eq:Reconstruction_v}
\begin{split}
V_1 = \left\{ \sum_{x_{l} \in I_H} \mathds{1}_{K_{H}(x_l)} \sum_{i > i_{0}} \left( \sum_j v_{i,j} \phi_{j}  \hat{U}_{i} + \sum_j v_{i,j} \tilde{\phi}_{j}\cdot\nabla \hat{U}_{i} \right): \; \hat{U}_i \in \mathcal{F}(\Omega) \right\}, \\
V_2 = \left\{ \sum_{x_{l} \in I_H} \mathds{1}_{K_{H}(x_l)} \sum_{i \leq i_{0}} \left( \sum_j v_{i,j} \phi_{j}  \hat{U}_{i} + \sum_j v_{i,j} \tilde{\phi}_{j}\cdot\nabla \hat{U}_{i} \right): \;\hat{U}_i \in \mathcal{F}(\Omega) \right\}.
\end{split}
\end{equation}
Here, $v_i=(v_{i,j})_{1\leq j \leq N} \in \mathbb{R}^N$ are linearly independent, and $i_{0} \in [1,N]$ is an integer to be determined. They will be carefully constructed such that the stability conditions (\ref{eq:Scheme_1_stability}) and (\ref{eq:Scheme_2_stability}) hold. The definitions of the maps $T_1$ and $T_2$, as well as the bilinear forms $m_{ij}(\cdot,\cdot)$, $a_{ij}(\cdot,\cdot)$, $b_{ij}(\cdot,\cdot)$ and $c_{ij}(\cdot,\cdot)$, are changed accordingly. Besides, the macroscopic variables $\hat{U}_i$ are linear combinations of the variables $U_i$ induced by $\psi_i$, with $\hat{U}_i = \sum_j \hat{v}_{i,j} U_j$. Denoting $v=(v_{i,j})_{N \times N}$ and $\hat{v}=(\hat{v}_{i,j})_{N \times N}$, we have $\hat{v}=(v^{T})^{-1}$ to ensure that $V_{mc}=V_1+V_2$. In the discussions that follow, we use $U_i$ in place of $\hat{U}_i$ for brevity of notation.

We consider $V_{i,H}=\prod_{j\in I_{i}}P(\mathcal{T}_{H})$, where $P(\mathcal{T}_{H})$
is the piecewise polynomial finite element space (with zero trace on $\partial \Omega$) with 
\begin{equation}
\sup_{W\in V_{2,H}}\dfrac{\|W\|_{H^{1}}^{2}}{\|W\|_{L^{2}}^{2}} \leq \dfrac{C_1}{H^{2}}    
\end{equation}
for some constant $C_1$, which depends on the shape regularity of the coarse mesh $\mathcal{T}_{H}$ but is independent of the mesh size $H$. Indeed, for a given mesh $\mathcal{T}_{H}$, $C_1$ can be obtained by solving a generalized eigenvalue problem about the stiffness and the mass matrix.

\subsection{Construction for discretization scheme 1}

For the construction for discretization scheme 1, we can formulate a min-max optimization problem to obtain $\{v_i\}_{1\leq i\leq N}$ and $i_{0}$ in the general case. However, since this problem is in tensor form, it is complicated to solve directly. To address this, by introducing reasonable assumptions, we can simplify the problem and provide suboptimal solutions.

To promise the stability condition (\ref{eq:Scheme_1_stability}), we can estimate the upper bound for $\sup_{W\in V_{2,H}}\frac{\|W\|_{a_{22}}^{2}}{\|W\|_{m_{22}}^{2}}$ as follows
\begin{equation}
\label{eq:Scheme_1_estimate_bound}
\sup_{W\in V_{2,H}}\frac{\|W\|_{a_{22}}^{2}}{\|W\|_{m_{22}}^{2}}
\leq \sup_{W\in V_{2,H}}\frac{\|W\|_{a_{22}}^{2}}{\|W\|_{H^1}^{2}} \cdot \sup_{W\in V_{2,H}}\frac{\|W\|_{H^1}^{2}}{\|W\|_{L^2}^{2}} \cdot \sup_{W\in V_{2,H}}\frac{\|W\|_{L^2}^{2}}{\|W\|_{m_{22}}^{2}}.
\end{equation}

For $1 \leq k,l \leq N$, we define a $d\times d$ matrix $A^{kl}$ by $A^{kl}=\dfrac{1}{|K|}a_{K}(\tilde{\phi}_k, \tilde{\phi}_l)$, or equivalently by
\begin{equation}
A_{mn}^{kl}=\dfrac{1}{|K|}a_{K}(\phi_k^m, \phi_l^n)
= \dfrac{1}{|K|} \int_{K} \kappa \nabla\phi_k^m \cdot \nabla\phi_l^n,
\end{equation}
where $K$ is a coarse block in the given mesh $\mathcal{T}_H$.
Then we can define a rank-4 tensor $A$ by $A_{kmln}=A_{mn}^{kl}$ for $1 \leq k,l \leq N$ and $1 \leq m,n \leq d$. It can be noticed that $A_{kmln}=A_{lnkm}$, and $A^{kl}$ is nearly diagonal. Also, we define an $N \times N$ symmetric matrix $M$ by 
\begin{equation}
M_{kl} = \frac{1}{|K|} \int_K \phi_k \phi_l.
\end{equation}
From the constraint in the cell problem (\ref{eq:mc_cell_problem_1}), it can be observed that $M$ is a diagonally dominant matrix with diagonal entries near $1$ and off-diagonal entries being small.

Now we consider the following Rayleigh quotient problem involving tensors: for $1 \leq i \leq N$, we want to find a subspace $S_{i} \subset \mathbb{R}^N$ and a number $\lambda_i \in \mathbb{R}$ such that 
\begin{equation}
\label{eq:Scheme_1_Rayleigh}
\begin{split}
S_i & = \argmin_{\substack{S\subset\mathbb{R}^N \\ \dim(S)=i}} \max_{v\otimes w\in S\otimes\mathbb{R}^d} 
\dfrac{(A:v\otimes w):(v\otimes w)}{((M\otimes I):v\otimes w):(v\otimes w)}, \\
\lambda_i & = \min_{\substack{S\subset\mathbb{R}^N \\ \dim(S)=i}} \max_{v\otimes w\in S\otimes\mathbb{R}^d}
\dfrac{(A:v\otimes w):(v\otimes w)}{((M\otimes I):v\otimes w):(v\otimes w)}.
\end{split}
\end{equation}
Here, $\otimes$ and $:$ denote the tensor product and the double dot product respectively. 
For simplicity, we assume that one can obtain similar splittings for different coarse blocks, so the above problem only needs to be solved once. Otherwise, the formulated quotient problem would need to be global across the entire spatial region rather than localized within a coarse block.
We let $i_{0}$ represent the number of small $\lambda_i$'s, and select an orthonormal basis $\{v_i\}_{1 \leq i \leq N}$ of $\mathbb{R}^N$ with respect to $M$, namely, $v_i^T M v_j = \delta_{ij}$, such that
\begin{equation}
\label{eq:Scheme_1_v_i_construction}
S_{i_{0}} = \operatorname*{span}_{1 \leq i \leq i_{0}} \{v_i\},\quad
\mathbb{R}^N = S_{i_{0}} \oplus \operatorname*{span}_{i_{0}+1 \leq i \leq N} \{v_i\}.
\end{equation}
With such construction, the $m_{22}$-norm is nearly identical to the $L^2$-norm restricted in $V_{2,H}$.

\begin{lem}
For any $W\in V_{2,H}$, we have 
\begin{equation}
\|W\|_{L^2} \approx \|W\|_{m_{22}},
\end{equation}
if $\{v_i\}_{1 \leq i \leq N}$ is orthonormal with respect to $M$.
\end{lem}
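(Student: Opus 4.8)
The plan is to expand $\|W\|_{m_{22}}^2$ in terms of the representation of $W$ and show it is comparable to $\|W\|_{L^2}^2$ by exploiting the $M$-orthonormality together with the scalings \eqref{eq:mc_scalings}. First I would write $W \in V_{2,H}$ using the redefined $V_2$ from \eqref{eq:Reconstruction_v}: locally on each coarse block $K_H(x_l)$, $T_2 W = \sum_{i \le i_0} \sum_j v_{i,j}\bigl(\phi_j U_i + \tilde\phi_j\cdot\nabla U_i\bigr)$ for macroscopic functions $U_i$. By the scalings $\|\phi_j\| = O(1)$, $\|\tilde\phi_j\| = O(H)$ and the smoothness assumption (the variation of $U_i$ and $\nabla U_i$ over $K$ is small compared with that of $\phi_j, \tilde\phi_j$), the contribution of the $\tilde\phi_j\cdot\nabla U_i$ term to the $L^2$ inner product is lower-order in $H$ relative to the $\phi_j U_i$ term, exactly as in the derivation of the effective time-derivative term in Section~\ref{sec:preliminaries}. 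Hence $\|W\|_{m_{22}}^2 = (T_2 W, T_2 W) \approx \sum_{K}\int_K \bigl(\sum_{i\le i_0} U_i \sum_j v_{i,j}\phi_j\bigr)\bigl(\sum_{i'\le i_0} U_{i'} \sum_{j'} v_{i',j'}\phi_{j'}\bigr)$.

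Next I would recognize the block-diagonal-in-$K$ quadratic form: the inner spatial integral is $\sum_{i,i'\le i_0} U_i U_{i'} \cdot \frac{1}{|K|}\bigl(\sum_{j,j'} v_{i,j} v_{i',j'} \int_K \phi_j \phi_{j'}\bigr)\cdot|K| = |K|\sum_{i,i'\le i_0} U_i U_{i'}\, (v_i^T M v_{i'})$, using the definition $M_{kl} = \frac{1}{|K|}\int_K \phi_k\phi_l$. By the $M$-orthonormality $v_i^T M v_{i'} = \delta_{ii'}$, this collapses to $|K|\sum_{i\le i_0} U_i^2$, so that $\|W\|_{m_{22}}^2 \approx \sum_K |K|\sum_{i\le i_0}(U_i|_K)^2 \approx \sum_{i\le i_0}\|U_i\|_{L^2(\Omega)}^2$. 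Then I would compute $\|W\|_{L^2}^2 = \|T_2 W\|^2$ by the same two-step reduction — drop the $\tilde\phi$ term as lower order, then use that the $\phi_j$ are (approximately) supported disjointly on the continua, or more robustly just re-use the identity above — to conclude $\|W\|_{L^2}^2 \approx \sum_{i\le i_0}\|U_i\|_{L^2(\Omega)}^2$ as well, giving $\|W\|_{L^2} \approx \|W\|_{m_{22}}$.

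The main obstacle is making the ``$\approx$'' precise and uniform. Two distinct approximations are being folded together: the neglect of the oversampling/gradient terms $\tilde\phi_j\cdot\nabla U_i$, which relies on the scaling estimates \eqref{eq:mc_scalings} and the smoothness of $U_i$ (this is the same controlled approximation used throughout the multicontinuum homogenization derivation, so I would simply invoke it), and the identification of $\|W\|_{L^2}^2$ with the $M$-weighted form, which is exact up to the first approximation. The subtle point is that $\|W\|_{m_{22}}$ is \emph{defined} via $(T_2 W, T_2 W)$ — i.e. via the \emph{global} $L^2$ inner product of the reconstructed fine-scale field — so $\|W\|_{m_{22}}$ and $\|W\|_{L^2}$ differ only in how one bookkeeps the macroscopic-variable representation, and the role of $M$-orthonormality is precisely to make the Gram matrix of the local basis $\{\sum_j v_{i,j}\phi_j\}_{i\le i_0}$ equal to the identity, so no spectral constants degrade the equivalence. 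I would therefore present the argument as: (i) $\|W\|_{m_{22}}^2 = (T_2W, T_2W)$ by definition of $m_{22}$; (ii) reduce both sides to the $\phi$-only part via \eqref{eq:mc_scalings}; (iii) observe the resulting quadratic form has Gram matrix $M$ in the $v_i$ basis, which is $I$ by hypothesis; hence the constants in ``$\approx$'' are $1 + O(H)$ and independent of the contrast and of $H$ to leading order.
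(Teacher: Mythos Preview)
Your core computation is correct and matches the paper's proof exactly: expand $\|W\|_{m_{22}}^2 = (T_2W,T_2W)$, drop the $\tilde\phi_j\cdot\nabla U_i$ contributions via the scalings \eqref{eq:mc_scalings} and smoothness, and collapse the resulting quadratic form using $v_i^T M v_{i'} = \delta_{ii'}$ to obtain $\sum_{i\le i_0}\|U_i\|_{L^2}^2$.

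The one confusion to clear up is the meaning of $\|W\|_{L^2}$. Here $W = (U_1,\ldots,U_{i_0}) \in V_{2,H}\subset\prod_{k\in I_2}\mathcal{F}$ is a tuple of macroscopic functions, and $\|W\|_{L^2}^2$ simply means $\sum_{i\le i_0}\|U_i\|_{L^2(\Omega)}^2$; it is \emph{not} $\|T_2 W\|_{L^2}^2$. Your second computation, where you write ``$\|W\|_{L^2}^2 = \|T_2 W\|^2$ by the same two-step reduction,'' is therefore both unnecessary and circular, since $\|T_2 W\|^2 = (T_2W,T_2W) = \|W\|_{m_{22}}^2$ by definition. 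Once you reach $\|W\|_{m_{22}}^2 \approx \sum_{i\le i_0}\|U_i\|_{L^2}^2$, you are done: the right-hand side \emph{is} $\|W\|_{L^2}^2$. You partially recognize this in your last paragraph, but the middle paragraph should simply be deleted.
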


\begin{proof}
For any $W \in  V_{2,H}$, we have $W=(U_1, U_2, \ldots, U_{i_{0}})$ for some $U_i \in P(\mathcal{T}_{H})$, and
\begin{equation}
\begin{split}
\|W\|_{m_{22}}^{2}
& \approx (\sum_{x_{l} \in I_H} \mathds{1}_{K_{H}(x_l)} \sum_{i \leq i_{0}} \sum_k v_{i,k} \phi_{k}  U_{i}, \; \sum_{x_{l} \in I_H} \mathds{1}_{K_{H}(x_l)} \sum_{j \leq i_{0}} \sum_l v_{j,l} \phi_{l}  U_{j}) \\
& \approx \int_{\Omega} \sum_{i,j \leq i_{0}} \left( \sum_{k,l} v_{i,k} M_{kl} v_{j,l} \right) U_{i} U_{j} \\
& = \int_{\Omega}
\begin{pmatrix}
    U_1, U_2, \cdots, U_{i_{0}}
\end{pmatrix}
\begin{pmatrix}
    v_1^T \\ v_2^T \\ \vdots \\ v_{i_{0}}^T
\end{pmatrix}
M
\begin{pmatrix}
    v_1, v_2, \cdots, v_{i_{0}}
\end{pmatrix}
\begin{pmatrix}
    U_1 \\ U_2 \\ \vdots \\ U_{i_{0}} 
\end{pmatrix}
\\
& = \|W\|_{L^2}^2
,
\end{split}
\end{equation}
where the first and the second approximations are attributed to the estimates (\ref{eq:mc_scalings}) and the smoothness of $U_i$ respectively. The $M$-orthonormality of $\{v_i\}_{1 \leq i \leq N}$ is applied in the final equality.
\end{proof}

Moreover, one can similarly show that $m_{21}(W_1,W_2)=(T_1 W_1,T_2 W_2)=0$ for any $W_1 \in V_{1,H}$ and $W_2 \in V_{2,H}$, and thus $\gamma=0$. Then we can obtain the following stability condition in an explicit form.

\begin{thm}
\label{lem:Scheme_1_construction}
The discretization scheme 1 described in (\ref{eq:Scheme_1}) is stable if we choose 
\begin{equation}
\tau \leq C_1^{-1} \lambda_{i_0}^{-1} H^2
\end{equation}
\end{thm}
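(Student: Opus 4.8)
The strategy is to invoke the stability condition (\ref{eq:Scheme_1_stability}) already established for discretization scheme 1 and to bound its right-hand side from below using the three ingredients collected in this section. By the construction of the $M$-orthonormal family $\{v_i\}$ in (\ref{eq:Scheme_1_v_i_construction}), the remark preceding this theorem gives $m_{21}(W_1,W_2)=(T_1W_1,T_2W_2)=0$ on $V_{1,H}\times V_{2,H}$, so $\gamma=0$ and (\ref{eq:Scheme_1_stability}) reduces to requiring $\tau\le\inf_{W\in V_{2,H}}\|W\|_{m_{22}}^2/\|W\|_{a_{22}}^2$. Hence it suffices to show $\sup_{W\in V_{2,H}}\|W\|_{a_{22}}^2/\|W\|_{m_{22}}^2\lesssim C_1\lambda_{i_0}H^{-2}$, and I would attack this through the factorization (\ref{eq:Scheme_1_estimate_bound}).

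Of the three factors there, two are immediate: the middle one, $\sup_{W\in V_{2,H}}\|W\|_{H^1}^2/\|W\|_{L^2}^2$, is $\le C_1/H^2$ by the choice of $P(\mathcal T_H)$; and the last one, $\sup_{W\in V_{2,H}}\|W\|_{L^2}^2/\|W\|_{m_{22}}^2$, is $\approx 1$ by the preceding lemma (which is exactly $\|W\|_{L^2}\approx\|W\|_{m_{22}}$, obtained from the scalings (\ref{eq:mc_scalings}), the smoothness of the macroscopic variables, and the $M$-orthonormality of $\{v_i\}$). So everything comes down to the estimate $\sup_{W\in V_{2,H}}\|W\|_{a_{22}}^2/\|W\|_{H^1}^2\lesssim\lambda_{i_0}$.

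For this I would argue as follows. Write $W=(U_1,\dots,U_{i_0})$ with $U_i\in P(\mathcal T_H)$, localize $a(\cdot,\cdot)=\sum_K a_K(\cdot,\cdot)$, insert $T_{i,1}U_i=\sum_{x_l}\mathds{1}_{K_H(x_l)}\sum_j v_{i,j}\,\tilde\phi_j\cdot\nabla U_i$, freeze $\nabla U_i$ on each coarse block by smoothness, and drop the lower-order contribution of $\phi_j^m\nabla(\partial_m U_i)$ using (\ref{eq:mc_scalings}); this yields $\|W\|_{a_{22}}^2\approx\sum_K|K|\,(A:Q^K):Q^K$ with $Q^K_{jm}=\sum_{i\le i_0}v_{i,j}\,\partial_m U_i(x_K)$. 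In the same manner, together with $v_i^TMv_k=\delta_{ik}$, one obtains $\|W\|_{H^1}^2\ge\|\nabla W\|^2\approx\sum_K|K|\,((M\otimes I):Q^K):Q^K$. Each column of $Q^K$ is a linear combination of the $v_i$ with $i\le i_0$, so $Q^K\in S_{i_0}\otimes\mathbb R^d$, and the ratio of the two sums is therefore at most $\max_K\max_{Q\in S_{i_0}\otimes\mathbb R^d}\frac{(A:Q):Q}{((M\otimes I):Q):Q}$, which — under the working assumption that the same splitting serves every coarse block — is a single number, namely $\lambda_{i_0}$ from (\ref{eq:Scheme_1_Rayleigh}). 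Multiplying the three bounds gives $\sup_{W\in V_{2,H}}\|W\|_{a_{22}}^2/\|W\|_{m_{22}}^2\lesssim C_1\lambda_{i_0}H^{-2}$, so any $\tau\le C_1^{-1}\lambda_{i_0}^{-1}H^2$ meets (\ref{eq:Scheme_1_stability}) and scheme 1 is stable.

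The step I expect to be delicate is the last identification. In (\ref{eq:Scheme_1_Rayleigh}), $\lambda_{i_0}$ is a maximum of the tensor Rayleigh quotient over rank-one elements $v\otimes w$ with $v\in S_{i_0}$, whereas the $Q^K$ above are generic elements of $S_{i_0}\otimes\mathbb R^d$; for an arbitrary rank-4 tensor the two maxima can differ. The gap is closed by the near-diagonal structure of $A$ recorded after its definition (the blocks $A^{kl}$ are essentially simultaneously diagonal, with $A^{kl}\approx0$ for $k\ne l$): when the $A^{kl}$ are exactly simultaneously diagonalizable the quotient splits over the $d$ columns of $Q$ with nonnegative weights $(Q_{\cdot m})^TM Q_{\cdot m}$, so the maximum is attained on a single column — hence on a rank-one tensor — and equals $\lambda_{i_0}$; away from that idealized case one incurs a correction governed by the off-diagonal magnitude. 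The remaining difficulty, keeping track of the multicontinuum approximation errors (the various "$\approx$"), is routine and consistent with the approximate nature of the expansion (\ref{eq:mc_expansion}) used throughout; these errors, together with the Poincaré-type constants, can be absorbed into $C_1$ at the cost of a harmless factor.
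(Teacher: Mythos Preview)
Your proof is correct and follows essentially the same route as the paper: reduce to (\ref{eq:Scheme_1_stability}) with $\gamma=0$, factor via (\ref{eq:Scheme_1_estimate_bound}), invoke the inverse inequality and the preceding lemma for two of the three factors, and bound $\|W\|_{a_{22}}^2/\|W\|_{H^1}^2$ by $\lambda_{i_0}$ through the local tensor representation of $a_{22}$. The paper carries out exactly this computation, writing $\|W\|_{a_{22}}^2\approx\int_\Omega\sum_{i,j\le i_0}(A:v_j\otimes\nabla U_j):(v_i\otimes\nabla U_i)$ and bounding it by $\lambda_{i_0}\int_\Omega\sum_{i\le i_0}|\nabla U_i|^2$ using $v_i^TMv_j=\delta_{ij}$.

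One remark worth recording: the ``delicate step'' you flag---that $\lambda_{i_0}$ in (\ref{eq:Scheme_1_Rayleigh}) is a maximum over rank-one tensors $v\otimes w$, while the object $Q=\sum_{i\le i_0}v_i\otimes\nabla U_i$ appearing in the estimate is a general element of $S_{i_0}\otimes\mathbb R^d$---is in fact glossed over in the paper's own proof, which passes directly from $(A:Q):Q$ to $\lambda_{i_0}\,((M\otimes I):Q):Q$ without comment. Your observation that the near-diagonal structure of the blocks $A^{kl}$ (recorded right after the definition of $A$) closes this gap is therefore not a detour but an actual clarification of the argument; when the $A^{kl}$ are simultaneously diagonal the quadratic form decouples over the columns of $Q$ and the rank-one bound suffices. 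So your proposal matches the paper's proof and is, on this point, slightly more careful.
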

and construct $\{v_i\}_{1 \leq i \leq N}$ as in (\ref{eq:Scheme_1_Rayleigh}) and (\ref{eq:Scheme_1_v_i_construction}).

\begin{proof}
We start our proof by estimating $\sup_{W\in V_{2,H}}\dfrac{\|W\|_{a_{22}}^{2}}{\|W\|_{H^1}^{2}}$. For any $W \in  V_{2,H}$, we have $W=(U_1,\ldots, U_{i_{0}})$ for some $U_i \in P(\mathcal{T}_{H})$, and
\begin{equation}
\begin{split}
\|W\|_{a_{22}}^{2}
& = a(\sum_{x_{l} \in I_H} \mathds{1}_{K_{H}(x_l)} \sum_{i \leq i_{0}} \sum_k v_{i,k} \tilde{\phi}_{k} \cdot \nabla U_{i}, \; \sum_{x_{l} \in I_H} \mathds{1}_{K_{H}(x_l)} \sum_{j \leq i_{0}} \sum_l v_{j,l} \tilde{\phi}_{l} \cdot \nabla U_{j}) \\
& \approx \int_{\Omega} \sum_{i,j \leq i_{0}} (\nabla U_i)^T \left( \sum_{k,l} v_{i,k} A^{kl} v_{j,l} \right) \nabla U_j \\
& = \int_{\Omega} \sum_{i,j \leq i_{0}} (A : v_j \otimes \nabla U_j):(v_i \otimes \nabla U_i)
\\
& \leq \int_{\Omega} \lambda_{i_{0}} \sum_{1\leq i\leq i_{0}}(v_i^T M v_i)|\nabla U_i|^2 
= \int_{\Omega} \lambda_{i_{0}} \sum_{1\leq i\leq i_{0}}|\nabla U_i|^2,
\end{split}
\end{equation}
since $v_i^T M v_j = \delta_{ij}$. Therefore,  we obtain
\begin{equation}
\begin{split}
\sup_{W\in V_{2,H}}\frac{\|W\|_{a_{22}}^{2}}{\|W\|_{H^1}^{2}} 
\leq \lambda_{i_{0}}
\leq \dfrac{H^2}{C_1 \tau},
\end{split}
\end{equation}
and we have 
\begin{equation}
\sup_{W\in V_{2,H}}\cfrac{\|W\|_{a_{22}}^{2}}{\|W\|_{m_{22}}^{2}}
\leq \sup_{W\in V_{2,H}}\cfrac{\|W\|_{a_{22}}^{2}}{\|W\|_{H^1}^{2}} \cdot \sup_{W\in V_{2,H}}\cfrac{\|W\|_{H^1}^{2}}{\|W\|_{L^2}^{2}} \cdot \sup_{W\in V_{2,H}}\cfrac{\|W\|_{L^2}^{2}}{\|W\|_{m_{22}}^{2}}
\leq \dfrac{1}{\tau}
= \dfrac{(1-\gamma^2)}{\tau},
\end{equation}
which promises the stability condition (\ref{eq:Scheme_1_stability}).
\end{proof}

To solve the tensor-based min-max problem (\ref{eq:Scheme_1_Rayleigh}) more efficiently, we can introduce some assumptions to simplify it. One of the possible methods is to restrict the choices for $S_i$ to be finite based on some a priori information. For instance, we can assume that $S_i$ is spanned by $i$ natural basis vectors in $\mathbb{R}^N$, giving a total of $\binom{N}{i}$ choices for $S_i$. Under this assumption, the optimization problem becomes much easier to solve. When the $M$-inner product is sufficiently close to the $l^2$-inner product, i.e., when $M$ is almost diagonal, this assumption implies that the continua are divided into two groups as the multicontinuum space is decomposed.

Another method is to follow the idea of eigendecomposition and this is what we will apply in the later numerical examples. For the rank-4 tensor $A$, we define a rank-reduced tensor $\tilde{A}$ such that $\tilde{A}_{kl}=\operatorname*{max\;eig}(A^{kl})$, and then $\tilde{A}$ can be considered as a symmetric $N \times N$ matrix. We perform the generalized eigenvalue decomposition of $(\tilde{A},M)$ and select $\{v_i\}_{1 \leq i \leq N}$ to be the $M$-orthonormal eigenvectors of $\tilde{A}$ with the corresponding eigenvalues $\lambda_i$ ascending such that
\begin{equation}
\label{eq:Scheme_1_eig}
    \tilde{A} v_i=\lambda_i M v_i.
\end{equation}
The number of small eigenvalues is again denoted by $i_0$. Then choosing $\tau \leq C_1^{-1} \lambda_{i_0}^{-1} H^2$ can ensure the stability under the assumption that
\begin{equation}
\sum_{k,l} \operatorname{max\;eig}\left( v_{i,k} A^{kl} v_{j,l} \right)
\leq
\sum_{k,l} \left( v_{i,k} \operatorname{max\;eig}\left( A^{kl} \right)v_{j,l} \right),
\end{equation}
for any $i,j\leq i_{0}$.
We can follow the idea of the proof in Lemma \ref{lem:Scheme_1_construction} to show it. We remark that the above assumption may not hold when $v_i$ has negative components. However, it holds when the diagonal parts of $A$ are more dominant than the off-diagonal parts, which often occurs in practical examples. We can directly verify whether the assumption holds after solving the generalized eigenvalue problem. If it does not, then we can return to using the first method.

\subsection{Construction for discretization scheme 2}

For discretization scheme 2, we may also formulate a Rayleigh quotient problem similar to the one in (\ref{eq:Scheme_1_Rayleigh}) for scheme 1 and apply some simplifications. However, since the effect of the $a_{22}$-norm is indeed more dominant than the $c_{22}$-norm, one can directly construct $\{v_i\}_{1\leq i\leq N}$ and $i_{0}$ the same as in (\ref{eq:Scheme_1_v_i_construction}). An explicit form of the stability condition is provided in the following.

We define an $N\times N$ symmetric matrix $C$ by
\begin{equation}
C_{ij} = \dfrac{1}{|K|} a_{K}(\phi_i,\phi_j) = \dfrac{1}{|K|} \int_K \kappa \nabla \phi_i \cdot \nabla \phi_j.
\end{equation}
Then for any $W \in  V_{2,H}$, we can write $W=(U_1,\ldots, U_{i_{0}})$ for some $U_i \in P(\mathcal{T}_{H})$. Therefore, we have
\begin{equation}
\begin{split}
\|W\|_{c_{22}}^{2}
& = a(\sum_{x_{l} \in I_H} \mathds{1}_{K_{H}(x_l)} \sum_{i \leq i_{0}} \sum_k v_{i,k} \phi_{k}  U_{i}, \; \sum_{x_{l} \in I_H} \mathds{1}_{K_{H}(x_l)} \sum_{j \leq i_{0}} \sum_l v_{j,l} \phi_{l}  U_{j}) \\
& = \int_{\Omega} \sum_{i,j \leq i_{0}} \left( \sum_{k,l} v_{i,k} C_{kl} v_{j,l} \right) U_{i} U_{j} \\
& = \int_{\Omega}
\begin{pmatrix}
    U_1, U_2, \cdots, U_{i_{0}}
\end{pmatrix}
\begin{pmatrix}
    v_1^T \\ v_2^T \\ \vdots \\ v_{i_{0}}^T
\end{pmatrix}
C
\begin{pmatrix}
    v_1, v_2, \cdots, v_{i_{0}}
\end{pmatrix}
\begin{pmatrix}
    U_1 \\ U_2 \\ \vdots \\ U_{i_{0}}
\end{pmatrix}
,
\end{split}
\end{equation}
and
\begin{equation}
\|W\|_{L^2}^2 = \int_{\Omega} 
\begin{pmatrix}
    U_1, U_2, \cdots, U_{i_{0}}
\end{pmatrix}
\begin{pmatrix}
    v_1^T \\ v_2^T \\ \vdots \\ v_{i_{0}}^T
\end{pmatrix}
M
\begin{pmatrix}
    v_1, v_2, \cdots, v_{i_{0}}
\end{pmatrix}
\begin{pmatrix}
    U_1 \\ U_2 \\ \vdots \\ U_{i_{0}}
\end{pmatrix}
.
\end{equation}
Note that
$\begin{pmatrix}
    U_1, U_2, \cdots, U_{i_{0}}
\end{pmatrix}
\begin{pmatrix}
    v_1^T \\ v_2^T \\ \vdots \\ v_{i_{0}}^T
\end{pmatrix}$
is a $1\times N$ matrix, and we obtain
\begin{equation}
\begin{split}
\sup_{W\in V_{2,H}}\cfrac{\|W\|_{c_{22}}^{2}}{\|W\|_{m_{22}}^{2}}
\leq \sup_{W\in V_{2,H}}\cfrac{\|W\|_{c_{22}}^{2}}{\|W\|_{L^2}^{2}} \cdot \sup_{W\in V_{2,H}}\cfrac{\|W\|_{L^2}^{2}}{\|W\|_{m_{22}}^{2}}
\leq \dfrac{\operatorname{max\;eig}(H^2 C,M)}{H^2}.
\end{split}
\end{equation}
Here, $\operatorname{max\;eig}(H^2 C,M)$ represents the largest eigenvalue for the generalized eigenvalue problem $H^2 C v= \lambda M v$.
Combining the stability condition (\ref{eq:Scheme_2_stability}) with
\begin{equation}
\begin{split}
\inf_{W\in V_{2,H}}\cfrac{\|W\|_{m_{22}}^{2}}{\|W\|_{a_{22}}^{2}+\|W\|_{c_{22}}^{2}}
& \geq \left( \sup_{W\in V_{2,H}}\cfrac{\|W\|_{a_{22}}^{2}}{\|W\|_{m_{22}}^{2}} + \sup_{W\in V_{2,H}}\cfrac{\|W\|_{c_{22}}^{2}}{\|W\|_{m_{22}}^{2}} \right)^{-1} \\
& \geq (C_1 \lambda_{i_0} + \operatorname{max\;eig}(H^2 C,M))^{-1} H^2,    
\end{split}
\end{equation}
we conclude that
\begin{equation}
\tau \leq (C_1 \lambda_{i_0} + \operatorname{max\;eig}(H^2 C,M))^{-1} H^2
\end{equation}
will guarantee the stability. As we discussed before, $\operatorname{max\;eig}(H^2 C,M)$ is expected to be independent of the contrast of $\kappa$, since $\phi_i$ is contrast-independent and is constant in the high-value regions, if the continua are chosen appropriately.

Finally, we remark that the new continua obtained through the space decomposition are independent of the choice of basis, as long as $\operatorname*{span}_{i}\{\psi_i\}$ remains the same, and thus represent physical quantities. This is due to the construction of the cell problems (\ref{eq:mc_cell_problem_1}) and (\ref{eq:mc_cell_problem_2}), as well as the Rayleigh quotient problem (\ref{eq:Scheme_1_Rayleigh}). The former ensures that $\operatorname*{span}_{i}\{\phi_i\}$ and $\operatorname*{span}_{i}\{\phi_i^m\}$ are only dependent on $\operatorname*{span}_{i}\{\psi_i\}$, while the latter ensures that the solution of the quotient problem is only dependent on $\operatorname*{span}_{i}\{\phi_i\}$ and $\operatorname*{span}_{i}\{\phi_i^m\}$.
Besides, in the discussions of this section, we derive the stability conditions on the time step size $\tau$ by first determining the critical eigenvalue $\lambda_{i_0}$. The procedure can also be reversed: given a desired step size $\tau$, we can pick the largest eigenvalue $\lambda_{i_0(\tau,H^2)}$ for the space decomposition such that the scheme is stable, assuming $C_1$ is provided.

\section{Numerical examples}
\label{sec:numerical}

In this section, we present some numerical examples to show the accuracy and stability of our proposed method. 
We consider the parabolic PDE
\begin{equation}
    \partial_t u(x,t) - \nabla \cdot (\kappa(x) \nabla u(x,t)) = f(x), \quad x \in \Omega, \quad t\in (0,T],
\end{equation}
with zero Dirichlet boundary condition and zero initial condition, where $\Omega = (0,1)^2$, $\kappa$ is of high contrast, and $f(x)=10 e^{-40 \left( (x_1-0.5)^2+(x_2-0.5)^2 \right)}$ for $x=(x_1,x_2)\in \Omega$ as shown in Figure \ref{fig:source_term}.
We denote the coarse mesh size by $H$ and divide the computational domain $\Omega$ into $1/H \times 1/H$ square coarse blocks $K$'s of the same size. The oversampled domain $K^{+}$ is defined as an extension of $K$ itself by $l$ layers of coarse blocks. According to the analysis in \cite{chung2018constraint, efendiev2023multicontinuum}, we will take $l=\lceil -2\ln(H) \rceil$ in the following examples to mitigate boundary effects. We define the relative $L^2$ errors of solution in continuum $i$ at a specific time $t$ by
\begin{equation}
\label{eq:def_relative_error}
    e^{(i)}(t)=\sqrt{ \frac{\sum_K|\frac1{|K|}\int_K U_i(x,t) dx -  \frac{1}{\int_{K}\psi_i(x) dx}\int_{K}u(x,t)\psi_i(x) dx|^2}{\sum_K|\frac{1}{\int_{K}\psi_i(x) dx}\int_{K}u(x,t)\psi_i(x) dx|^2} },
\end{equation}
where $K$ denotes the coarse block, $U_i$ is the multiscale solution in continuum $i$, and $u$ is the reference solution solved on the fine grid. We fix the fine mesh size $h$ to be $1/400$.

\begin{figure}
    \centering
    \includegraphics[width=7cm]{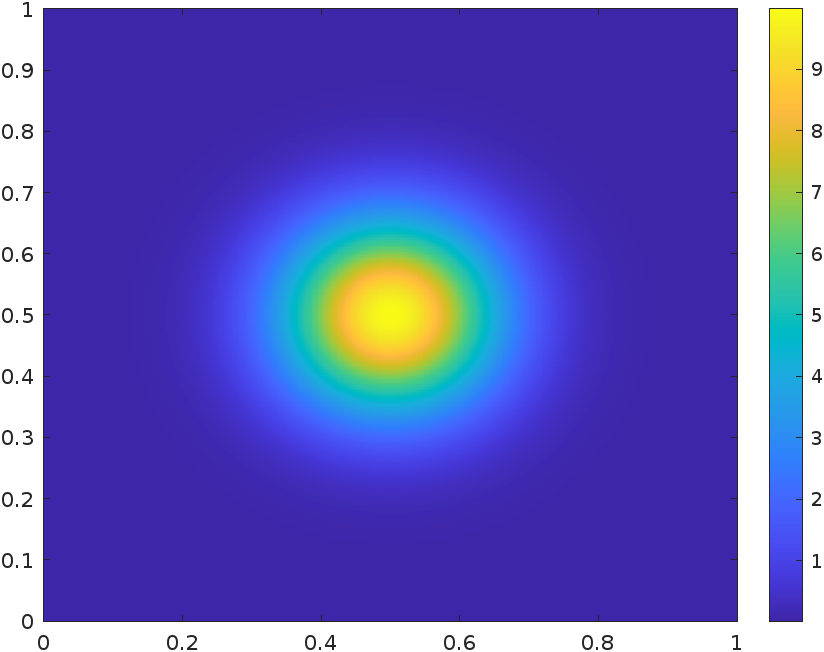}
    \caption{Source term $f$}
    \label{fig:source_term}
\end{figure}

\subsection{Two-continuum cases}

In the following two examples, we consider two-continuum cases, where the auxiliary functions $\psi_1$ and $\psi_2$ are chosen as the characteristic functions in the regions of low and high values respectively. We decompose the multicontinuum space as in (\ref{eq:first_decomposition}) and choose $I_1=\{2\}$ and $I_2=\{1\}$, as remarked at the end of Section \ref{sec:schemes}.
Indeed, if we apply the optimized decomposition from Section \ref{sec:construction} and solve the eigenvalue problem (\ref{eq:Scheme_1_eig}), the eigenvector corresponding to the small eigenvalue is $e_1=(1,0)^T$. This suggests that the first continuum is identified as slow dynamics, which aligns with our intuition. The numerical results of the eigenvalue problems are omitted in these two examples for brevity. More diverse results will be discussed in the multiple-continuum cases later.

\subsubsection{Example 1}

We consider a two-continuum case as shown in Figure \ref{fig:Example_1_setting}, where 
\begin{equation}
\kappa(x)=
\begin{cases}
1,&\quad x \in \Omega_1, \\
\max \kappa, & \quad x \in \Omega_2,
\end{cases}
\end{equation}
and $\Omega_1$ and $\Omega_2$ are respectively blue and yellow regions.

\begin{figure}
    \centering
    \includegraphics[width=7cm]{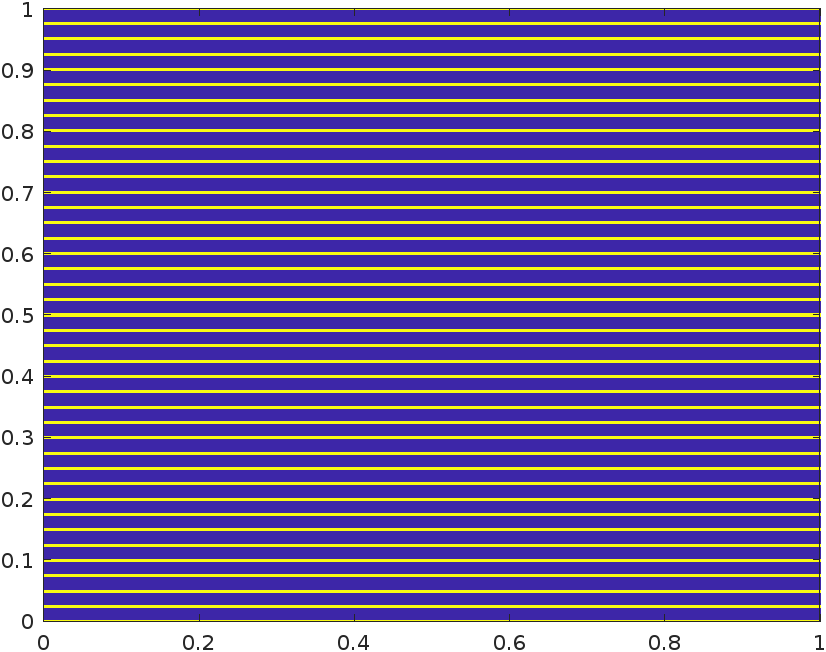}
    \quad
    \includegraphics[width=7cm]{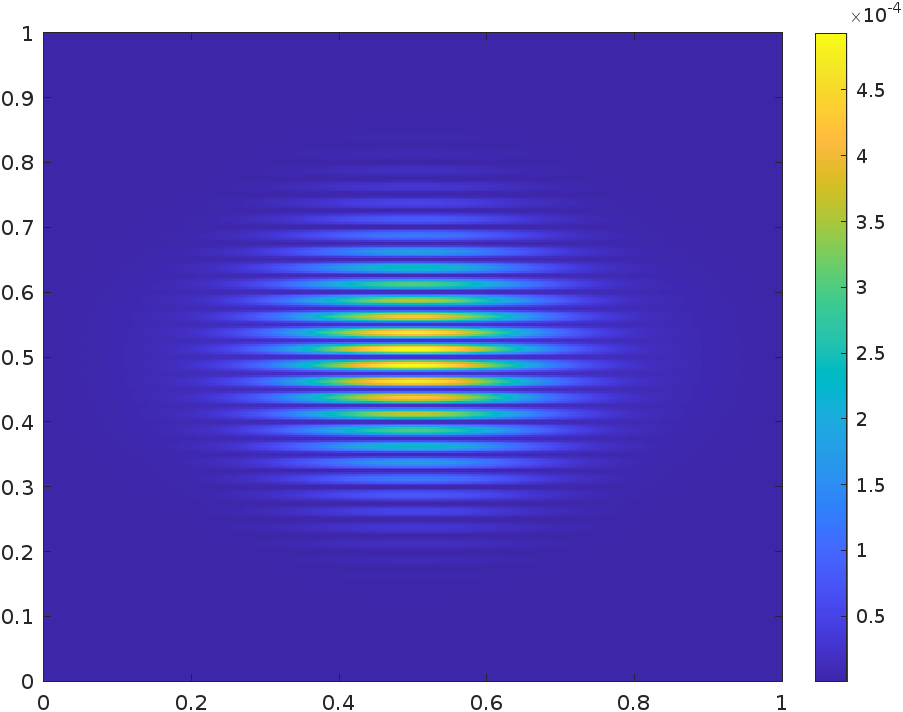}
    \caption{Left: Layered field $\kappa$ in Example 1. Right: Reference solution at the final time $T$ when $\max \kappa = 10^5$ in Example 1.}
    \label{fig:Example_1_setting}
\end{figure}

The estimates of the stability conditions for discretization scheme 1 are computed and listed in Table \ref{tab:Example_1_condition_1}. It can be observed that the infimum of $\frac{\|W\|_{m_{22}}^2}{\|W\|_{a_{22}}^2}$ over $V_{2,H}$ is much larger than that over $V_{1,H}$ and is independent of the contrast of $\kappa$. This indicates that the partially explicit scheme significantly relaxes the stability condition and reduces computational cost, especially for high-contrast cases. Moreover, we notice that the infimum decreases as the coarse mesh size $H$ decreases. Similar observations hold for the stability conditions of scheme 2, as shown in Table \ref{tab:Example_1_condition_2}.

\begin{table}
\centering
\caption{Estimates of $\inf \frac{\|W\|_{m_{22}}^2}{\|W\|_{a_{22}}^2}$ for different coarse mesh sizes $H$ and contrasts $\frac{\max{\kappa}}{\min{\kappa}}$ in Example 1}
\begin{tabular}{|c|c|c|c|c|c|c|c|c|}
\hline
\multirow{2}*{H} & \multicolumn{2}{|c|}{$\frac{\max{\kappa}}{\min{\kappa}}=10^4$} & \multicolumn{2}{|c|}{$\frac{\max{\kappa}}{\min{\kappa}}=10^5$} & \multicolumn{2}{|c|}{$\frac{\max{\kappa}}{\min{\kappa}}=10^6$} & \multicolumn{2}{|c|}{$\frac{\max{\kappa}}{\min{\kappa}}=10^7$}\\
\cline{2-9}
    & $V_{1,H}$ & $V_{2,H}$ & $V_{1,H}$ & $V_{2,H}$ & $V_{1,H}$ & $V_{2,H}$ & $V_{1,H}$ & $V_{2,H}$ \\
\hline

$1/10$ & $4.54 \times 10^{-7}$ & $3.13 \times 10^{-4}$ & 
$4.55 \times 10^{-8}$ & $3.13 \times 10^{-4}$ & 
$4.55 \times 10^{-9}$ & $3.13 \times 10^{-4}$ & 
$4.55 \times 10^{-10}$ & $3.13 \times 10^{-4}$ \\

$1/20$ & $1.14 \times 10^{-7}$ & $1.57 \times 10^{-4}$ & 
$1.14 \times 10^{-8} $ & $1.57 \times 10^{-4}$ & 
$1.14 \times 10^{-9} $ & $1.57 \times 10^{-4}$ & 
$1.14 \times 10^{-10}$ & $1.57 \times 10^{-4}$ \\

$1/40$ & $2.85 \times 10^{-8}$ & $7.55 \times 10^{-5}$ & 
$2.85 \times 10^{-9} $ & $7.55 \times 10^{-5}$ & 
$2.85 \times 10^{-10}$ & $7.55 \times 10^{-5}$ & 
$2.85 \times 10^{-11}$ & $7.55 \times 10^{-5}$ \\

\hline
\end{tabular}
\label{tab:Example_1_condition_1}
\end{table}

\begin{table}
\centering
\caption{Estimates of $\inf \frac{\|W\|_{m_{22}}^2}{\|W\|_{a_{22}}^2+\|W\|_{c_{22}}^2}$ for different coarse mesh sizes $H$ and contrasts $\frac{\max{\kappa}}{\min{\kappa}}$ in Example 1}
\begin{tabular}{|c|c|c|c|c|c|c|c|c|}
\hline
\multirow{2}*{H} & \multicolumn{2}{|c|}{$\frac{\max{\kappa}}{\min{\kappa}}=10^4$} & \multicolumn{2}{|c|}{$\frac{\max{\kappa}}{\min{\kappa}}=10^5$} & \multicolumn{2}{|c|}{$\frac{\max{\kappa}}{\min{\kappa}}=10^6$} & \multicolumn{2}{|c|}{$\frac{\max{\kappa}}{\min{\kappa}}=10^7$}\\
\cline{2-9}
    & $V_{1,H}$ & $V_{2,H}$ & $V_{1,H}$ & $V_{2,H}$ & $V_{1,H}$ & $V_{2,H}$ & $V_{1,H}$ & $V_{2,H}$ \\
\hline

$1/10$ & $4.41 \times 10^{-7}$ & $3.51 \times 10^{-5}$ & 
$4.54 \times 10^{-8}$ & $3.51 \times 10^{-5}$ & 
$4.55 \times 10^{-9}$ & $3.51 \times 10^{-5}$ & 
$4.55 \times 10^{-10}$ & $3.51 \times 10^{-5}$ \\

$1/20$ & $1.13 \times 10^{-7}$ & $3.16 \times 10^{-5}$ & 
$1.14 \times 10^{-8} $ & $3.16 \times 10^{-5}$ & 
$1.14 \times 10^{-9} $ & $3.16 \times 10^{-5}$ & 
$1.14 \times 10^{-10}$ & $3.16 \times 10^{-5}$ \\

$1/40$ & $2.85 \times 10^{-8}$ & $2.60 \times 10^{-5}$ & 
$2.85 \times 10^{-9} $ & $2.60 \times 10^{-5}$ & 
$2.85 \times 10^{-10}$ & $2.60 \times 10^{-5}$ & 
$2.85 \times 10^{-11}$ & $2.60 \times 10^{-5}$ \\

\hline
\end{tabular}
\label{tab:Example_1_condition_2}
\end{table}

Now we consider $\max \kappa = 10^5$ as an example. We choose the final time to be $T=1.5 \times 10^{-4}$, at which the solution reaches an almost-steady state depicted in Figure \ref{fig:Example_1_setting}. The time step size is taken to be $\Delta t=10^{-7}$. (We remark that the step size can actually be much larger due to the smoothness of the macroscopic variables.) We present the relative $L^2$ errors for different schemes when $H=1/10$, $1/20$ and $1/40$ in Figures \ref{fig:Example_1_error_1}, \ref{fig:Example_1_error_2} and \ref{fig:Example_1_error_3}. The reference solution is computed on the fine grid.
We notice that the curves representing the fully implicit scheme based on multicontinuum homogenization (blue), discretization scheme 1 (red) and discretization scheme 2 (yellow) almost coincide, while the fully explicit one (not shown in the figures) immediately diverges. This indicates that the proposed partially explicit schemes can achieve results as accurate as the fully implicit one but with lower computational cost.
Additionally, as the coarse mesh size becomes finer, higher accuracy is realized. From Tables \ref{tab:Example_1_condition_1} and \ref{tab:Example_1_condition_2}, we note that the stability conditions on the time step size $\tau$ are dependent on the coarse mesh size $H$, with smaller $H$ requiring a smaller $\tau$. Therefore, a balance between accuracy and computational cost should be struck in practice.

\begin{figure}
  \centering
  \includegraphics[width=7cm]{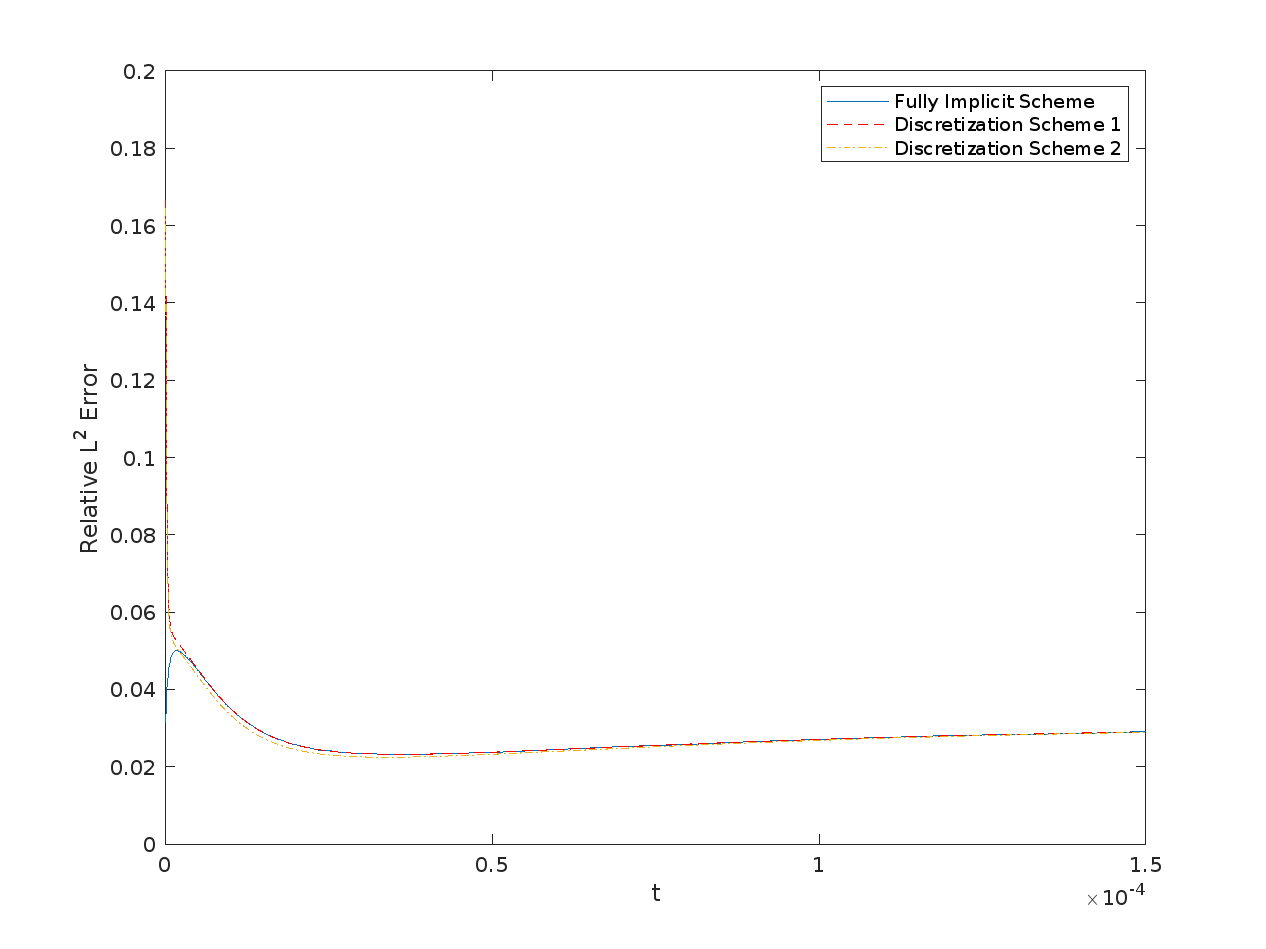}
  \quad
  \includegraphics[width=7cm]{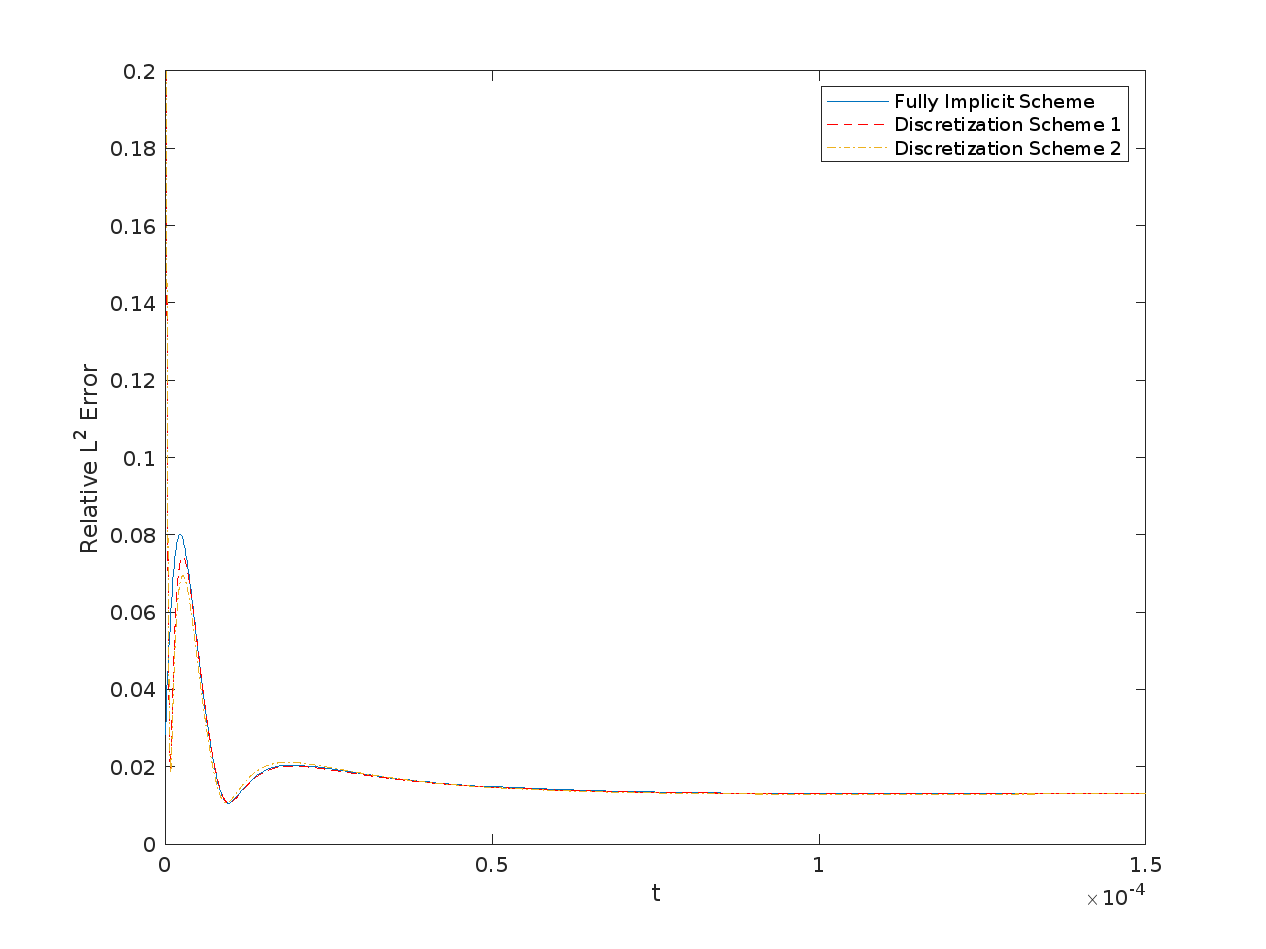}
  \quad
  \caption{Relative $L^2$ error for different schemes when $H=1/10$ and $l=5$ in Example 1. Left: $e^{(1)}(t)$. Right: $e^{(2)}(t)$.}
  \label{fig:Example_1_error_1}
\end{figure}

\begin{figure}
  \centering
  \includegraphics[width=7cm]{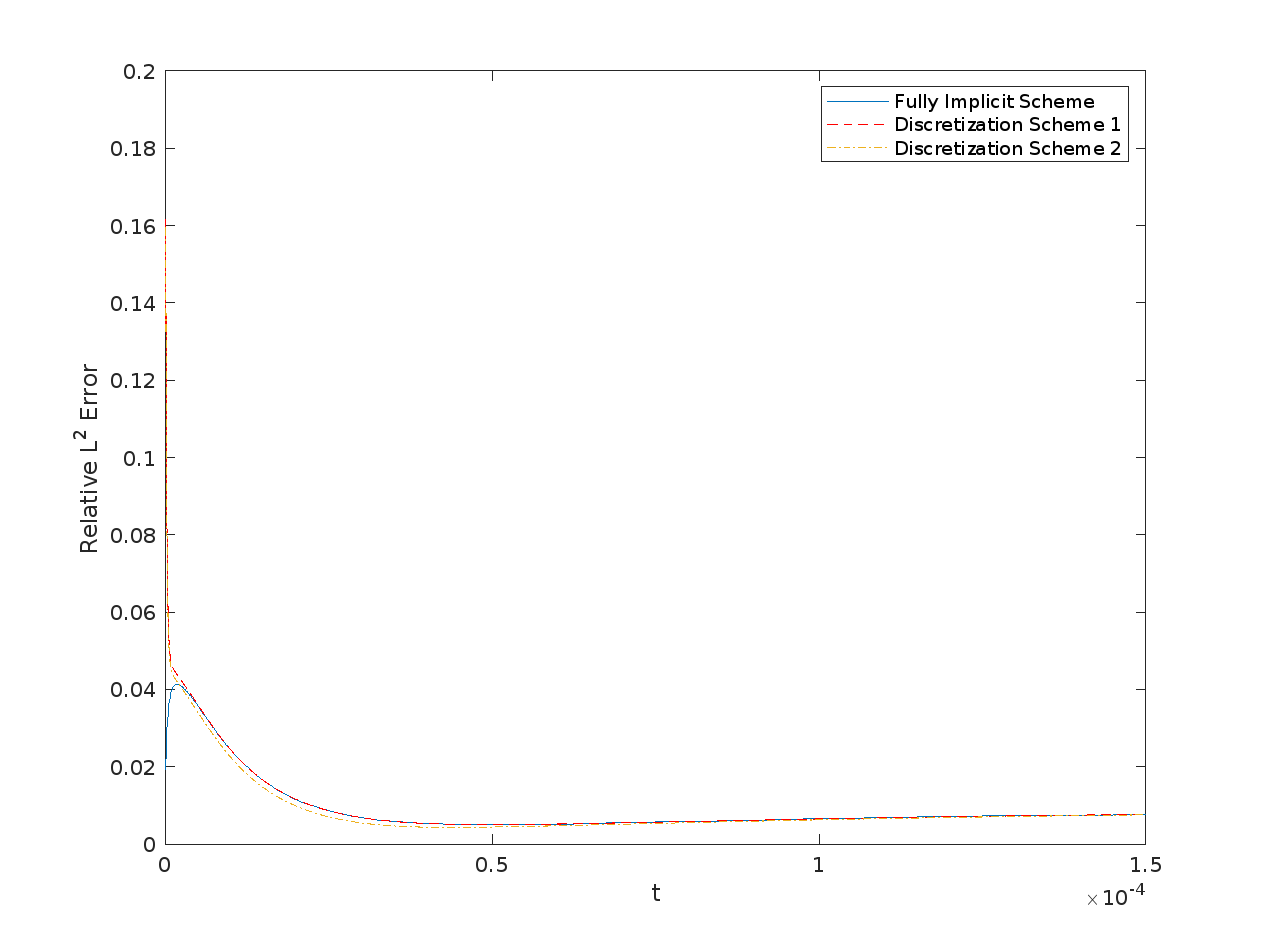}
  \quad
  \includegraphics[width=7cm]{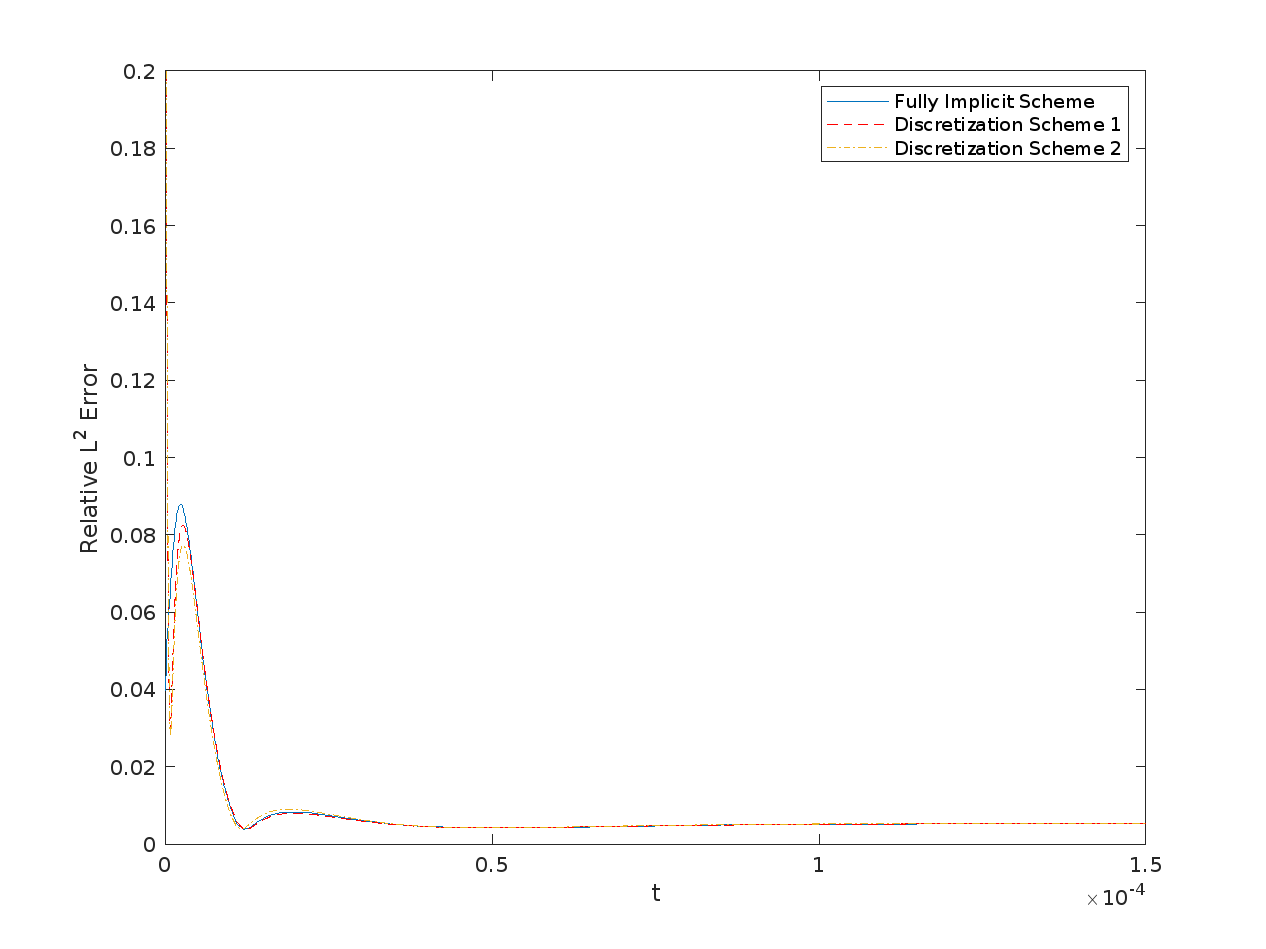}
  \quad
  \caption{Relative $L^2$ error for different schemes when $H=1/20$ and $l=6$ in Example 1. Left: $e^{(1)}(t)$. Right: $e^{(2)}(t)$.}
  \label{fig:Example_1_error_2}
\end{figure}

\begin{figure}
  \centering
  \includegraphics[width=7cm]{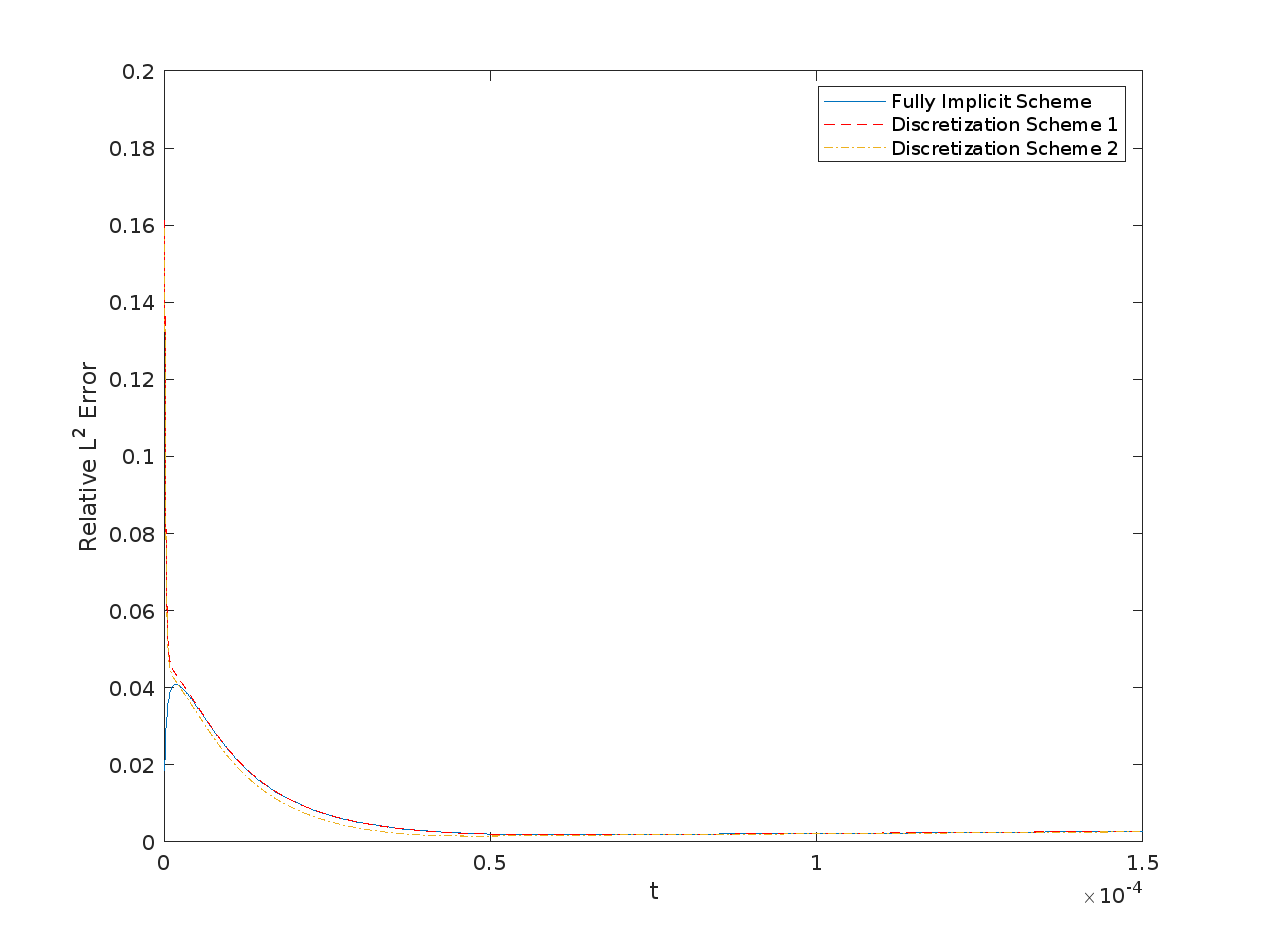}
  \quad
  \includegraphics[width=7cm]{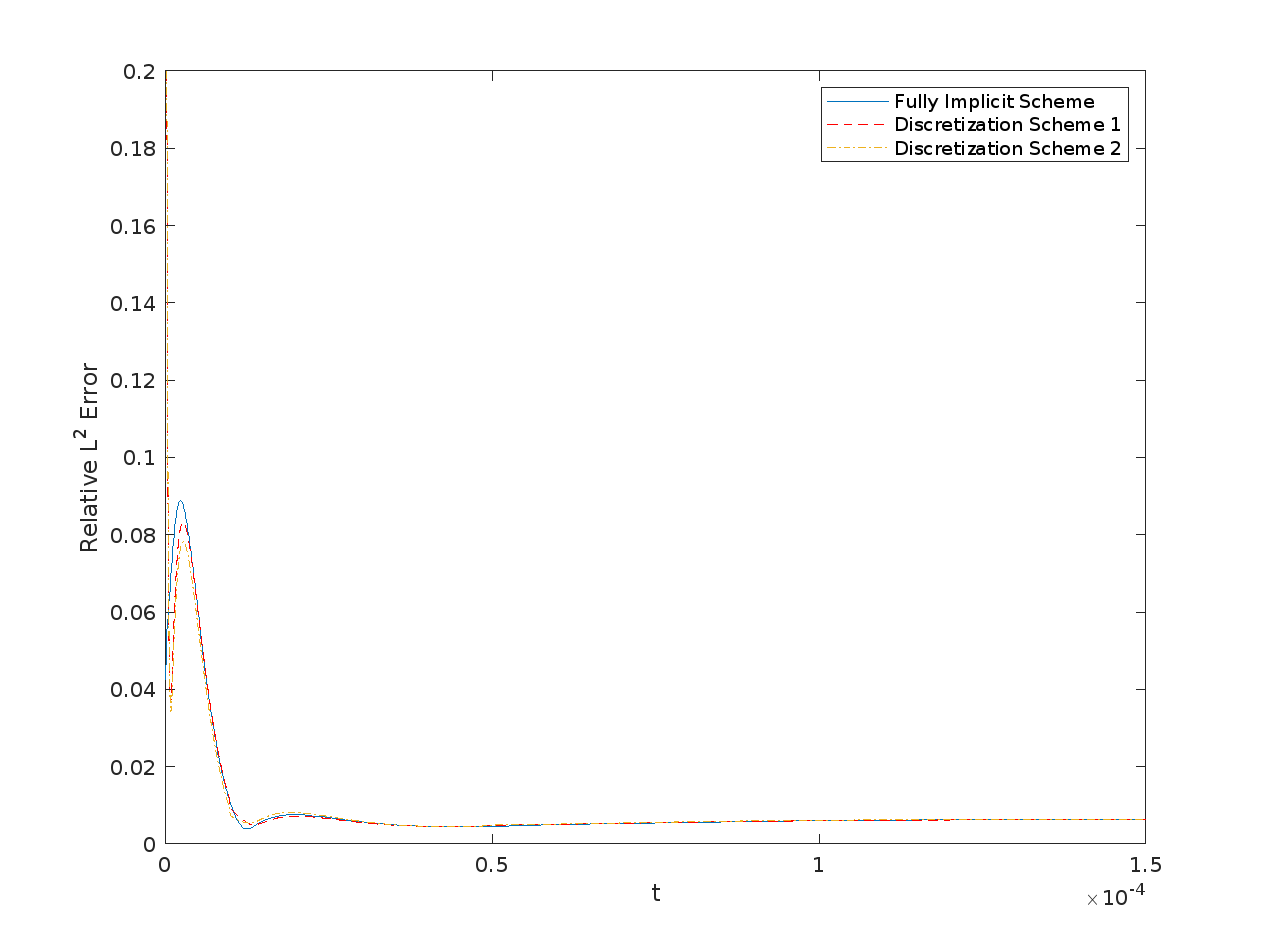}
  \quad
  \caption{Relative $L^2$ error for different schemes when $H=1/40$ and $l=8$ in Example 1. Left: $e^{(1)}(t)$. Right: $e^{(2)}(t)$.}
  \label{fig:Example_1_error_3}
\end{figure}

\subsubsection{Example 2}

In Example 2, we consider a non-periodic field $\kappa$ illustrated in Figure \ref{fig:Example_2_setting}. 
We estimate the stability conditions for scheme 1 and scheme 2, and list the results in Tables \ref{tab:Example_2_condition_1} and \ref{tab:Example_2_condition_2}. Similar observations can be made as in Example 1. The stability conditions are independent of the contrast of $\kappa$ as we analyzed. Next, we choose $\max \kappa = 10^5$, $T=10^{-4}$ and $\Delta t = 10^{-7}$. The reference solution at $T$ is also depicted in Figure \ref{fig:Example_2_setting}. The relative $L^2$ errors for different schemes with respect to the reference solution when $H=1/10$ and $1/20$ are presented in Figures \ref{fig:Example_2_error_1} and \ref{fig:Example_2_error_2}. Again, we observe that the errors for different schemes, except the fully explicit scheme, remain small and nearly identical, with the partially explicit schemes closely approximating the reference solution.

\begin{figure}
    \centering
    \includegraphics[width=7cm]{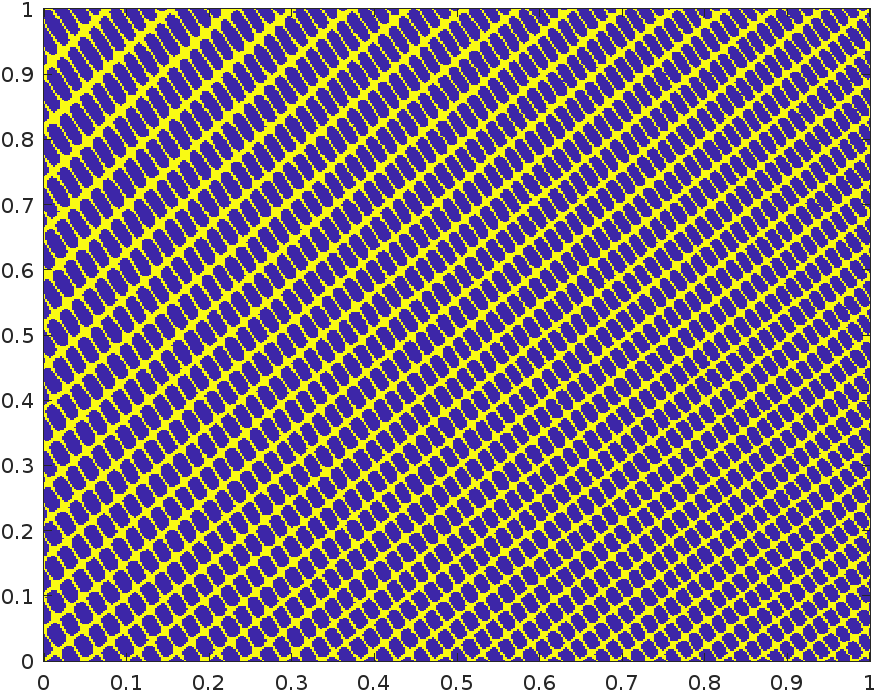}
    \quad
    \includegraphics[width=7cm]{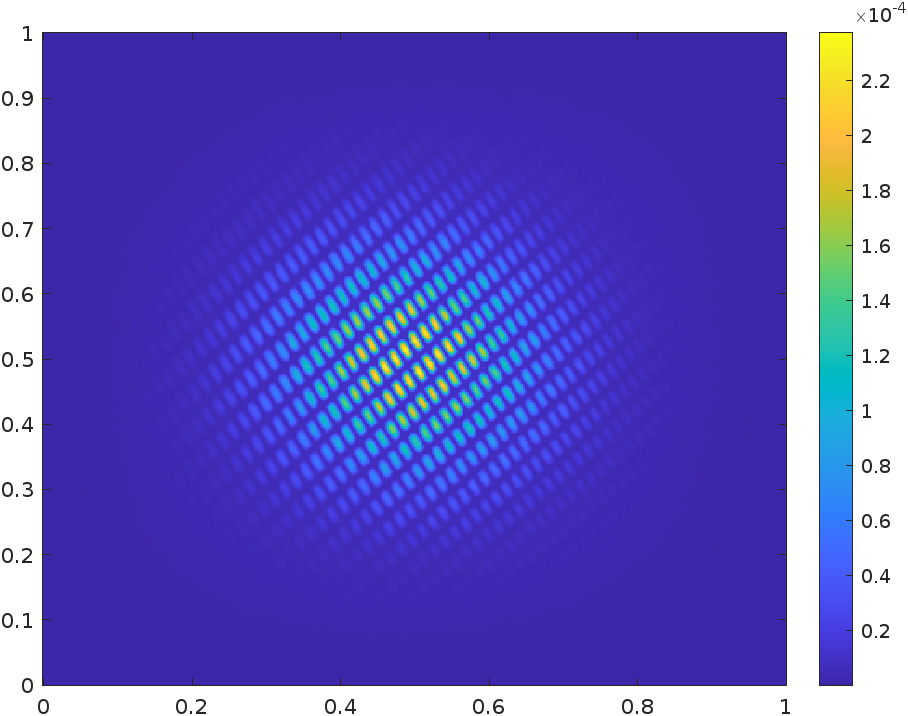}
    \caption{Left: Non-periodic field $\kappa$ in Example 2. Right: Reference solution at the final time $T$ when $\max \kappa = 10^5$ in Example 2.}
    \label{fig:Example_2_setting}
\end{figure}

\begin{table}
\centering
\caption{Estimates of $\inf \frac{\|W\|_{m_{22}}^2}{\|W\|_{a_{22}}^2}$ for different coarse mesh sizes $H$ and contrasts $\frac{\max{\kappa}}{\min{\kappa}}$ in Example 2}
\begin{tabular}{|c|c|c|c|c|c|c|c|c|}
\hline
\multirow{2}*{H} & \multicolumn{2}{|c|}{$\frac{\max{\kappa}}{\min{\kappa}}=10^4$} & \multicolumn{2}{|c|}{$\frac{\max{\kappa}}{\min{\kappa}}=10^5$} & \multicolumn{2}{|c|}{$\frac{\max{\kappa}}{\min{\kappa}}=10^6$} & \multicolumn{2}{|c|}{$\frac{\max{\kappa}}{\min{\kappa}}=10^7$}\\
\cline{2-9}
    & $V_{1,H}$ & $V_{2,H}$ & $V_{1,H}$ & $V_{2,H}$ & $V_{1,H}$ & $V_{2,H}$ & $V_{1,H}$ & $V_{2,H}$ \\
\hline

$1/10$ & $7.14 \times 10^{-7}$ & $9.80 \times 10^{-5}$ & 
$7.19 \times 10^{-8}$ & $9.80 \times 10^{-5}$ & 
$7.19 \times 10^{-9}$ & $9.80 \times 10^{-5}$ & 
$7.19 \times 10^{-10}$ & $9.81 \times 10^{-5}$ \\

$1/20$ & $1.66 \times 10^{-7}$ & $3.99 \times 10^{-5}$ & 
$1.67 \times 10^{-8} $ & $3.99 \times 10^{-5}$ & 
$1.67 \times 10^{-9} $ & $3.99 \times 10^{-5}$ & 
$1.67 \times 10^{-10}$ & $3.99 \times 10^{-5}$ \\

\hline
\end{tabular}
\label{tab:Example_2_condition_1}
\end{table}

\begin{table}
\centering
\caption{Estimates of $\inf \frac{\|W\|_{m_{22}}^2}{\|W\|_{a_{22}}^2+\|W\|_{c_{22}}^2}$ for different coarse mesh sizes $H$ and contrasts $\frac{\max{\kappa}}{\min{\kappa}}$ in Example 2}
\begin{tabular}{|c|c|c|c|c|c|c|c|c|}
\hline
\multirow{2}*{H} & \multicolumn{2}{|c|}{$\frac{\max{\kappa}}{\min{\kappa}}=10^4$} & \multicolumn{2}{|c|}{$\frac{\max{\kappa}}{\min{\kappa}}=10^5$} & \multicolumn{2}{|c|}{$\frac{\max{\kappa}}{\min{\kappa}}=10^6$} & \multicolumn{2}{|c|}{$\frac{\max{\kappa}}{\min{\kappa}}=10^7$}\\
\cline{2-9}
    & $V_{1,H}$ & $V_{2,H}$ & $V_{1,H}$ & $V_{2,H}$ & $V_{1,H}$ & $V_{2,H}$ & $V_{1,H}$ & $V_{2,H}$ \\
\hline

$1/10$ & $6.73 \times 10^{-7}$ & $1.08 \times 10^{-5}$ & 
$7.14 \times 10^{-8}$ & $1.08 \times 10^{-5}$ & 
$7.19 \times 10^{-9}$ & $1.08 \times 10^{-5}$ & 
$7.19 \times 10^{-10}$ & $1.08 \times 10^{-5}$ \\

$1/20$ & $1.64 \times 10^{-7}$ & $9.34 \times 10^{-6}$ & 
$1.67 \times 10^{-8} $ & $9.34 \times 10^{-6}$ & 
$1.67 \times 10^{-9} $ & $9.34 \times 10^{-6}$ & 
$1.67 \times 10^{-10}$ & $9.34 \times 10^{-6}$ \\

\hline
\end{tabular}
\label{tab:Example_2_condition_2}
\end{table}

\begin{figure}
  \centering
  \includegraphics[width=7cm]{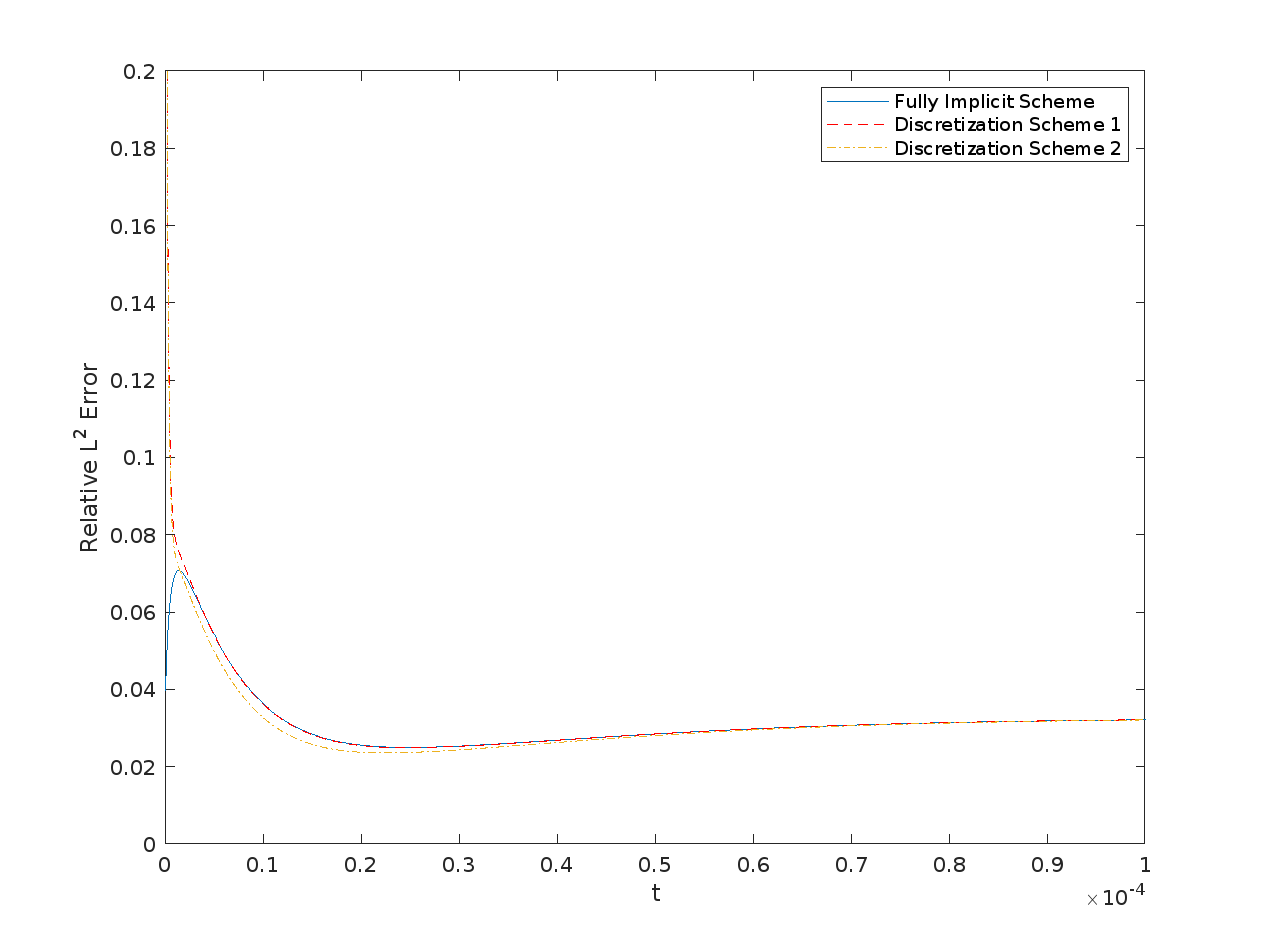}
  \quad
  \includegraphics[width=7cm]{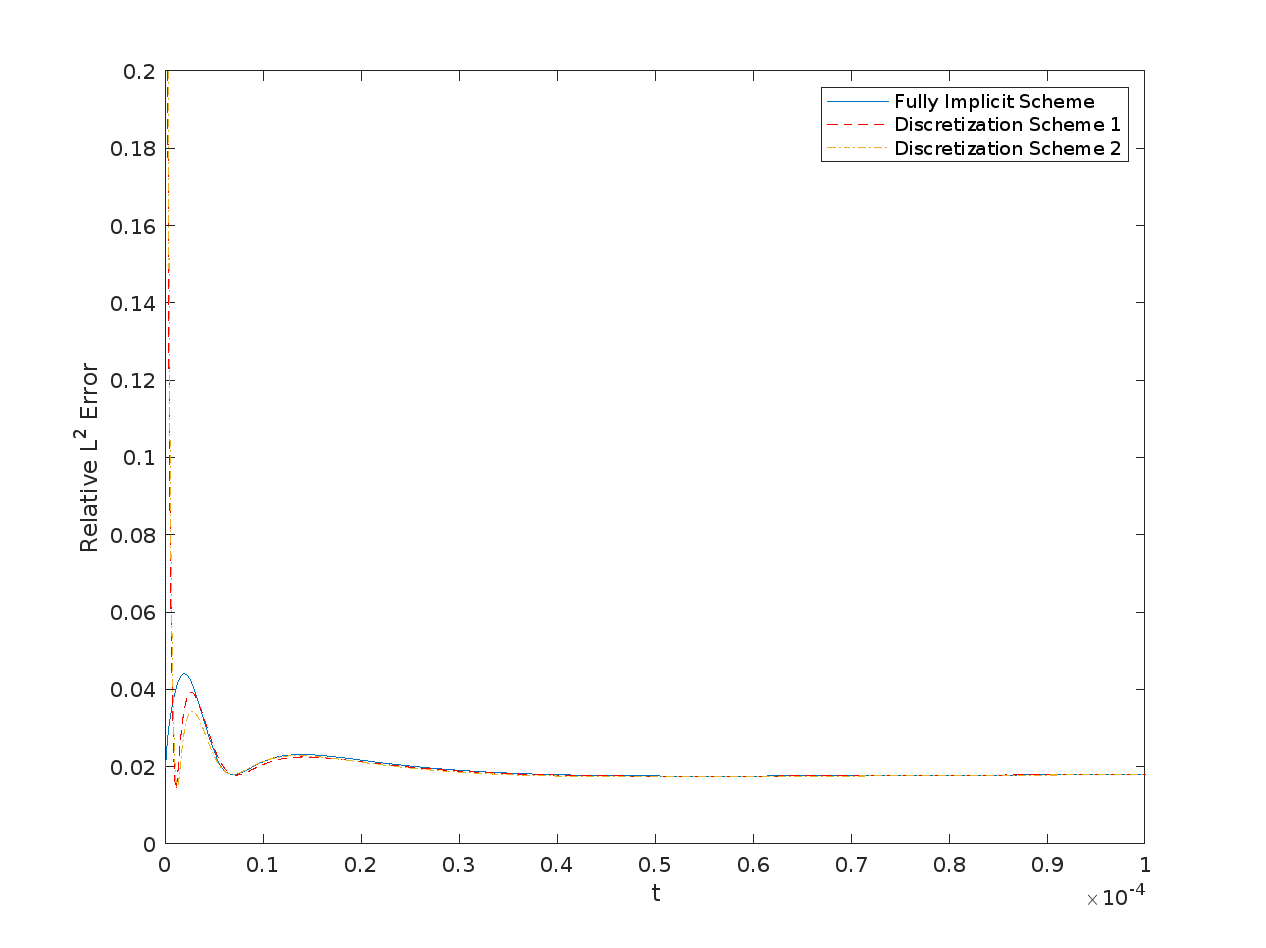}
  \quad
  \caption{Relative $L^2$ error for different schemes when $H=1/10$ and $l=5$ in Example 2. Left: $e^{(1)}(t)$. Right: $e^{(2)}(t)$.}
  \label{fig:Example_2_error_1}
\end{figure}

\begin{figure}
  \centering
  \includegraphics[width=7cm]{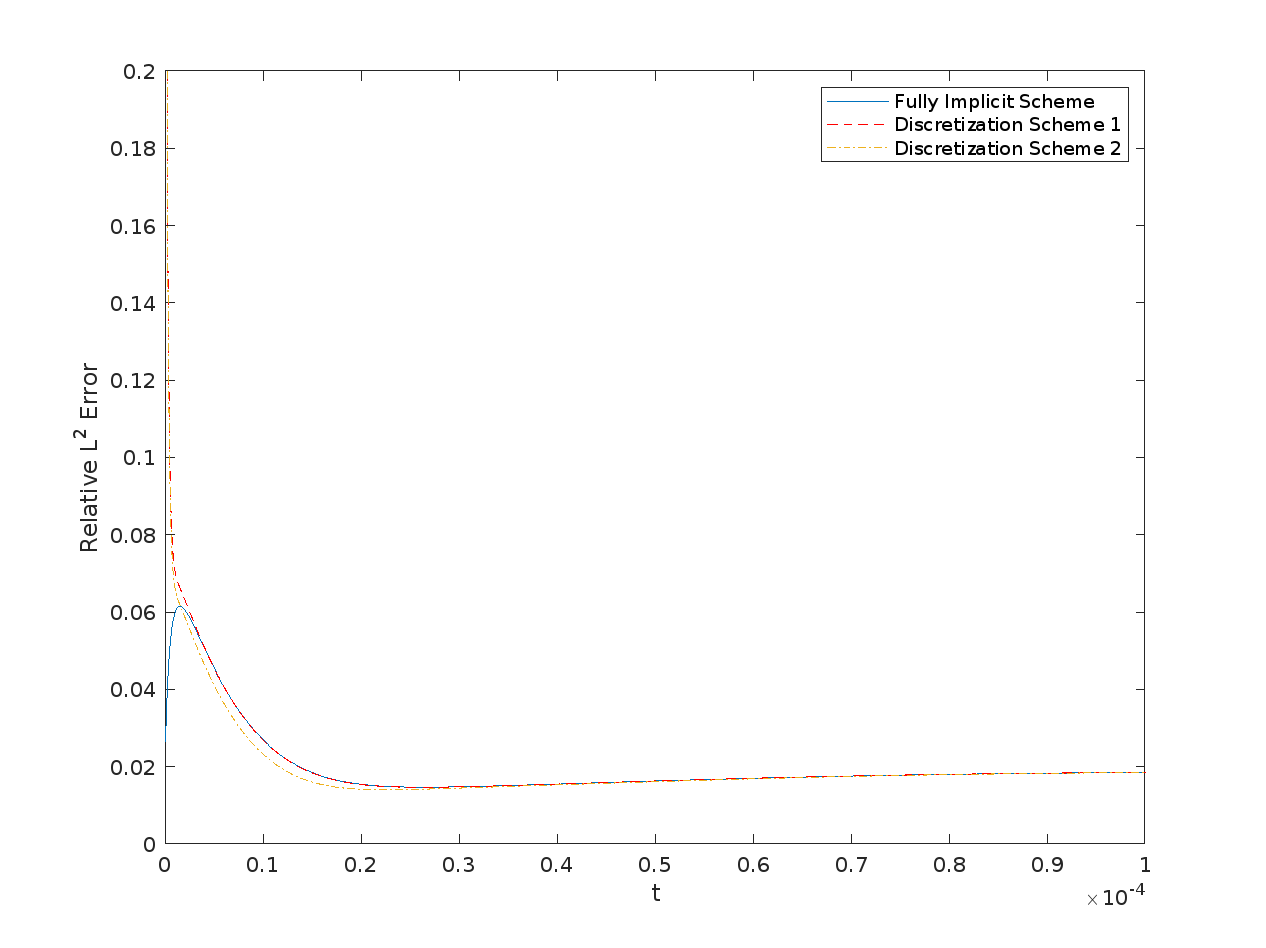}
  \quad
  \includegraphics[width=7cm]{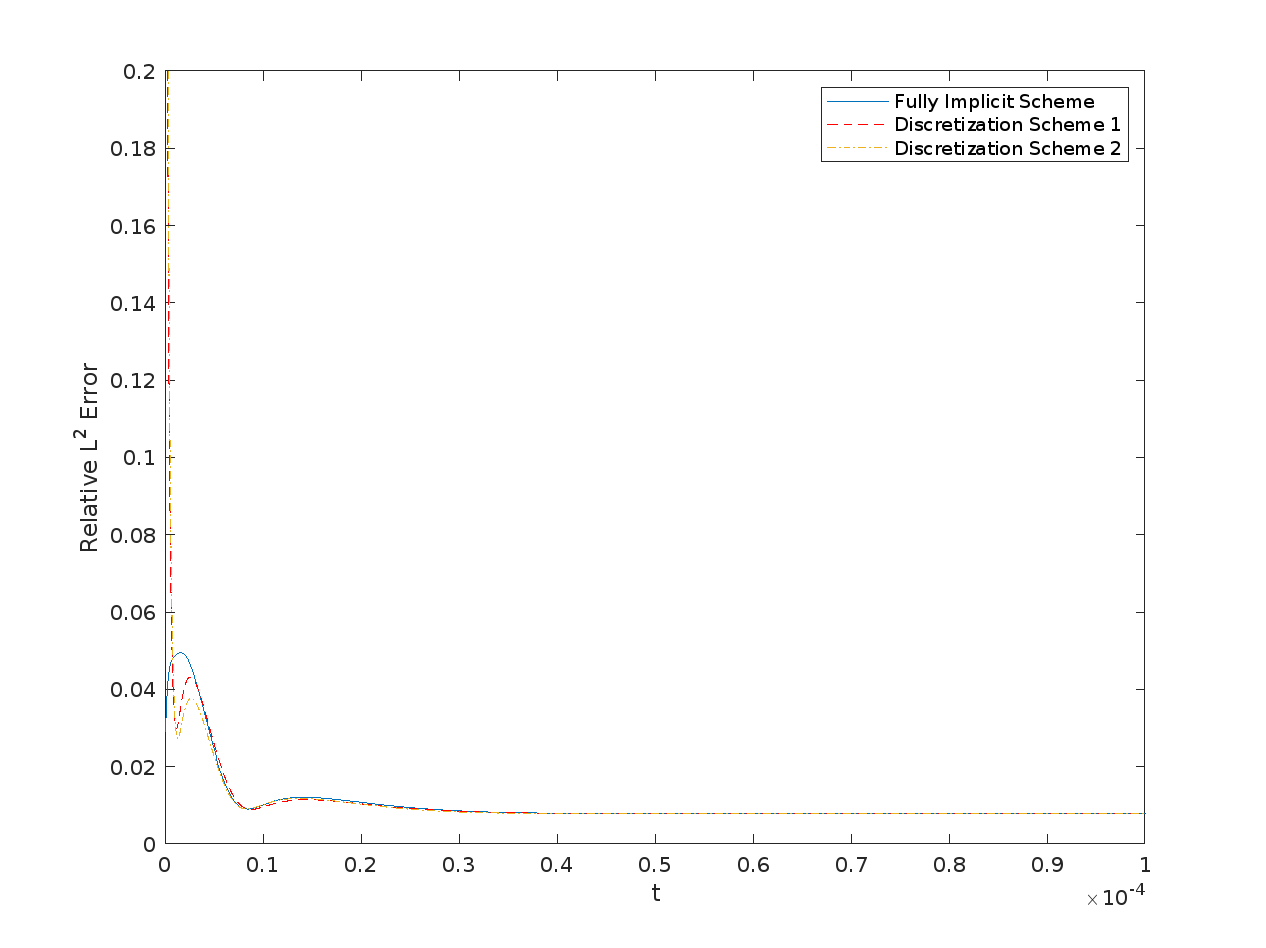}
  \quad
  \caption{Relative $L^2$ error for different schemes when $H=1/20$ and $l=6$ in Example 2. Left: $e^{(1)}(t)$. Right: $e^{(2)}(t)$.}
  \label{fig:Example_2_error_2}
\end{figure}

\subsection{Multiple-continuum cases}

In the following examples, we apply the optimized decomposition of the multicontinuum space to problems with multiple continua, which are defined in different patterns in each example to demonstrate the splitting results.

\subsubsection{Example 3}

We consider a layered field with three values as shown in Figure \ref{fig:Example_3_setting}, where the lowest value is $1$, the highest value is $10^7$, and the medium value is $10^2$. The corresponding three regions are denoted by $\Omega_1$, $\Omega_2$ and $\Omega_3$ respectively. We take the auxiliary functions $\psi_1$, $\psi_2$ and $\psi_3$ to be the characteristic functions in each of the three regions; namely, in a coarse block (RVE), we define $\psi_1(x) = \mathds{1}_{\Omega_1}(x)$, $\psi_2(x) = \mathds{1}_{\Omega_2}(x)$, and $\psi_3(x) = \mathds{1}_{\Omega_3}(x)$.
Following the discussions in Section \ref{sec:construction}, we present the results of eigenvalue problem (\ref{eq:Scheme_1_eig}) in Table \ref{tab:Example_3_eigen}. It can be seen that the second coordinates of the eigenvectors paired with small eigenvalues are zero, as expected. This indicates that the continua corresponding to the relatively low-value regions are considered to represent slow dynamics and are treated explicitly, while the continuum corresponding to the high-value regions is considered to represent fast dynamics and is treated implicitly.

\begin{figure}
    \centering
    \includegraphics[width=7cm]{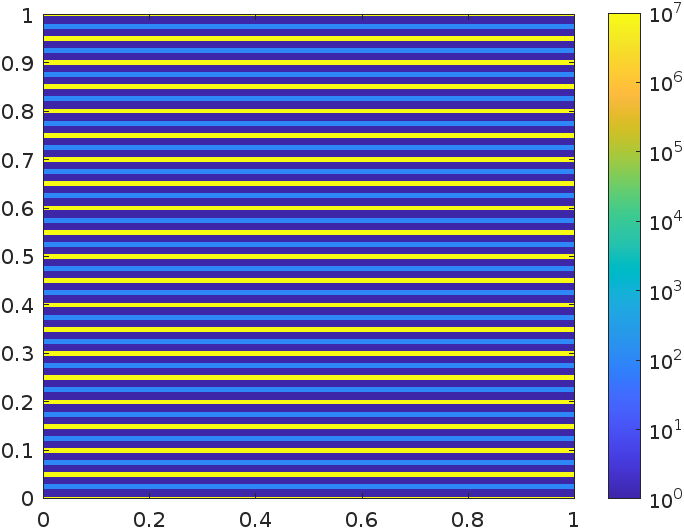}
    \quad
    \includegraphics[width=7cm]{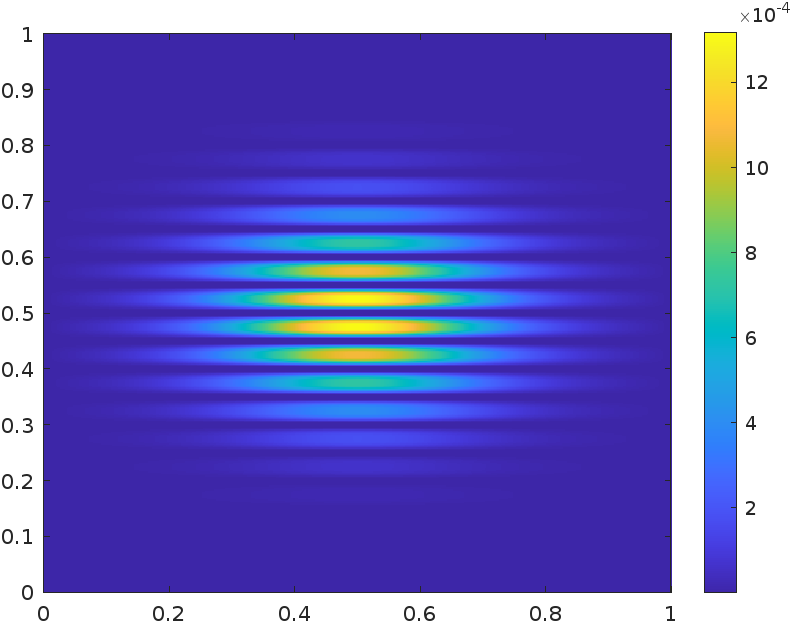}
    \caption{Left: Three-continuum field $\kappa$ in Example 3. Right: Reference solution at the final time $T$ in Example 3.}
    \label{fig:Example_3_setting}
\end{figure}

\begin{table}
    \centering
    \caption{Eigenvalues and corresponding eigenvectors of eigenvalue problems for different coarse mesh sizes $H$ in Example 3. Values below $5 \times 10^{-5}$ are listed as zero for simplification.}
    \begin{tabular}{|c|c|c|c|}
         \hline
         $H$ & $l$ & eigenvalues $\lambda_i$ &  eigenvectors $v_i$ \\
         \hline
         \multirow{3}*{1/10} & \multirow{3}*{5} & $1.0676 \times 10^{1}$ & $(1.1702, 0.0000, -0.0372)^T$ \\
         \cline{3-4}
         & & $8.7387 \times 10^{1}$ & $(0.1872, 0.0000, 1.9044)^T$ \\
         \cline{3-4}
         & & $7.3090 \times 10^{6}$ & $(0.1811, 1.9097, 0.1653)^T$ \\
         \hline
         \multirow{3}*{1/20} & \multirow{3}*{6} & $5.3479 \times 10^{0}$ & $(1.1718, 0.0000, -0.0197)^T$ \\
         \cline{3-4}
         & & $7.7783 \times 10^{1}$ & $(0.1765, 0.0000, 1.9047)^T$ \\
         \cline{3-4}
         & & $7.2902 \times 10^{6}$ & $(0.1811, 1.9097, 0.1653)^T$ \\
         \hline
    \end{tabular}   
    \label{tab:Example_3_eigen}
\end{table}

We choose $T = 6\times 10^{-4}$ and $\Delta t = 5\times 10^{-7}$. The almost-steady state at $T$ is presented in Figure \ref{fig:Example_3_setting}. We depict the relative $L^2$ errors for different schemes with respect to the reference solution in Figures \ref{fig:Example_3_error_1} and \ref{fig:Example_3_error_2}, where the divergent fully explicit scheme is not included. The figures on the left, middle and right describe the relative errors in the low-, high- and medium-value regions respectively. It can be noted that the partially explicit schemes achieve accuracy comparable to the fully implicit scheme, while requiring less computational effort. Additionally, the errors can be reduced by choosing smaller $H$.

\begin{figure}
  \centering
  \includegraphics[width=5cm]{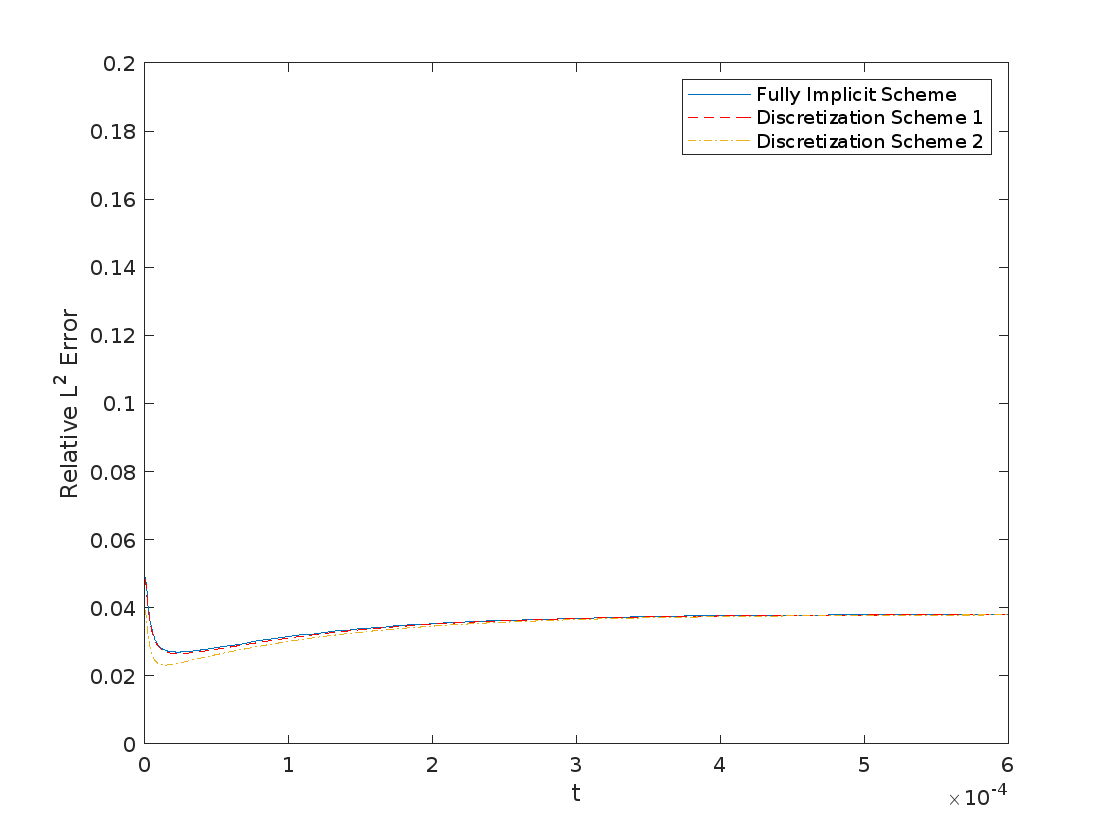}
  \quad
  \includegraphics[width=5cm]{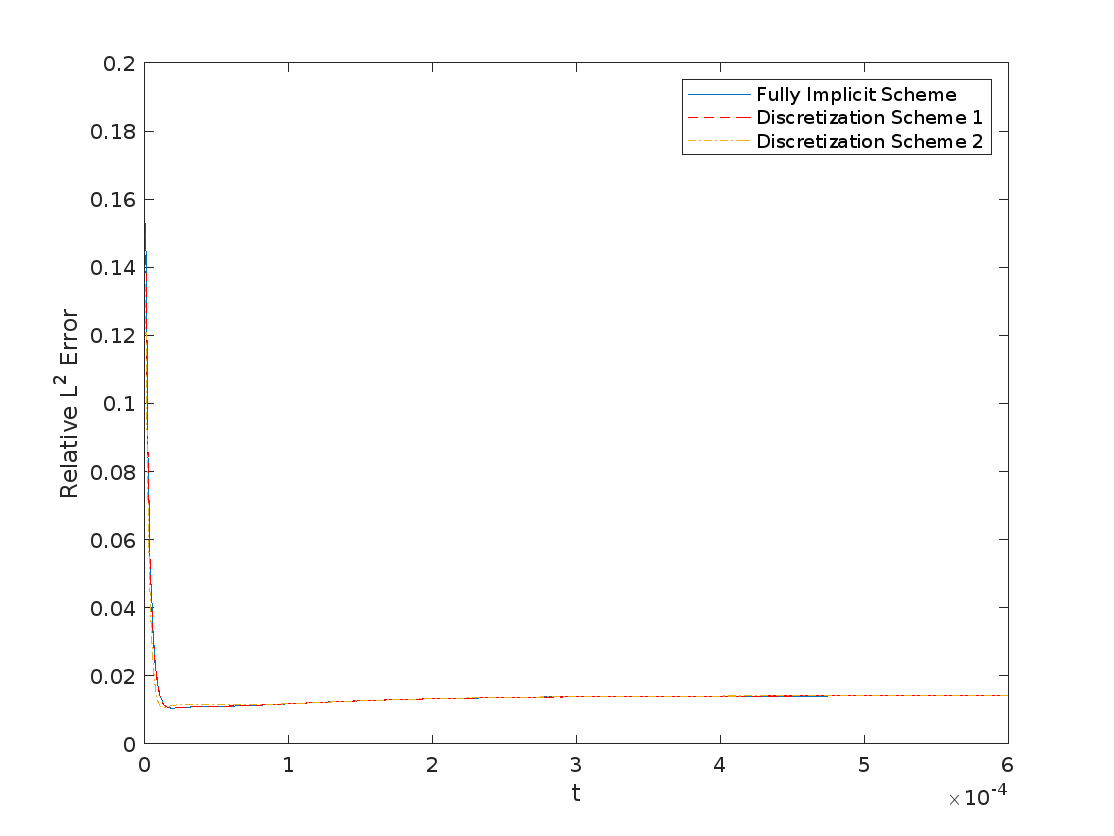}
  \quad
  \includegraphics[width=5cm]{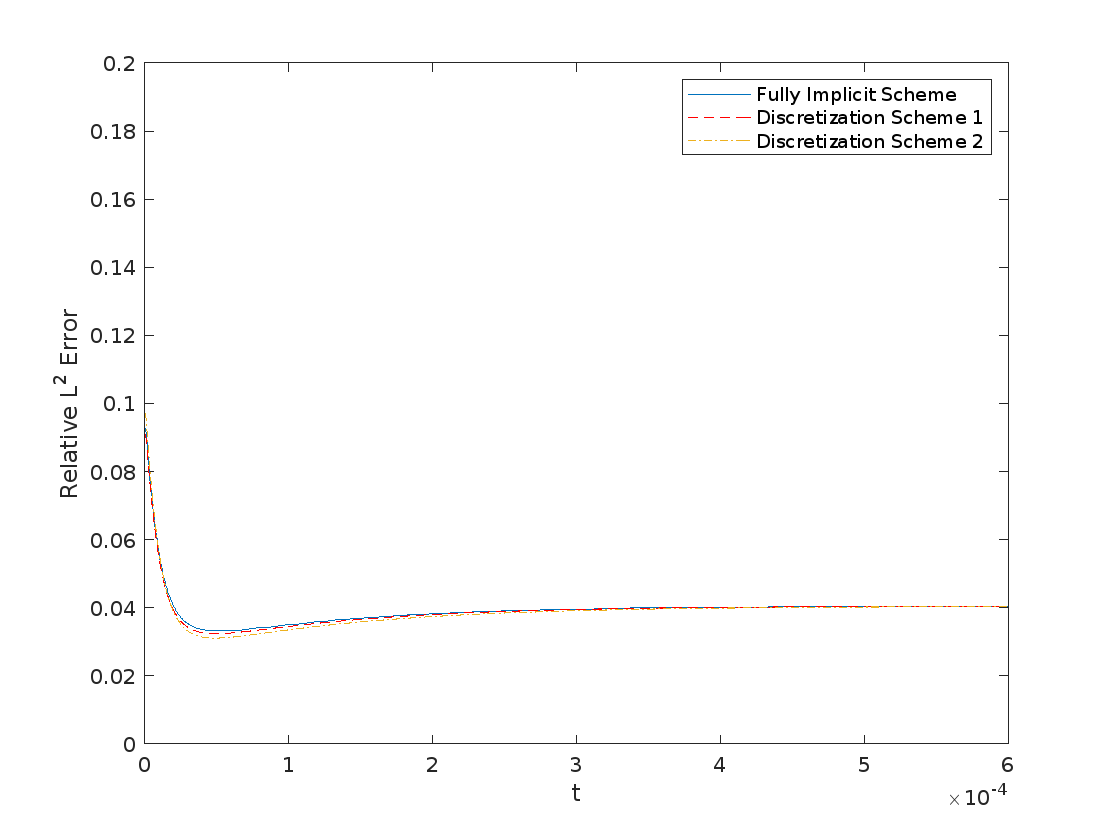}
  \caption{Relative $L^2$ error for different schemes when $H=1/10$ and $l=5$ in Example 3. Left: $e^{(1)}(t)$. Middle: $e^{(2)}(t)$. Right: $e^{(3)}(t)$.}
  \label{fig:Example_3_error_1}
\end{figure}

\begin{figure}
  \centering
  \includegraphics[width=5cm]{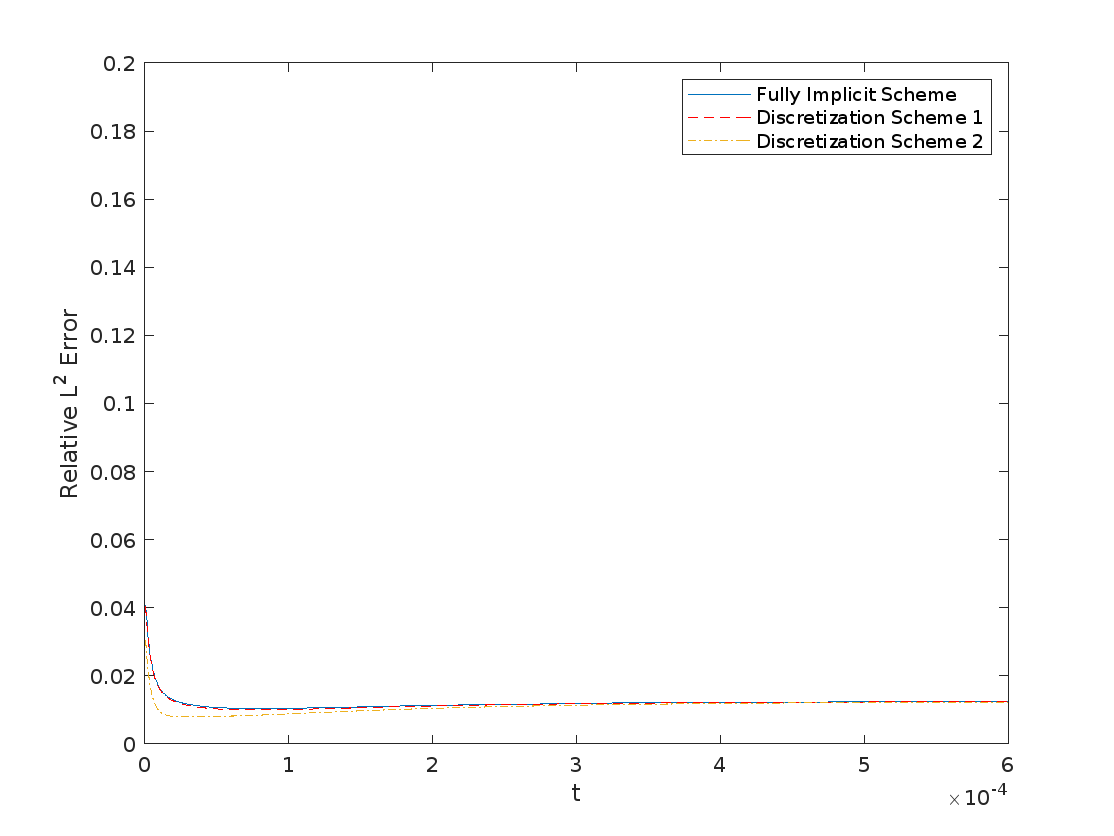}
  \quad
  \includegraphics[width=5cm]{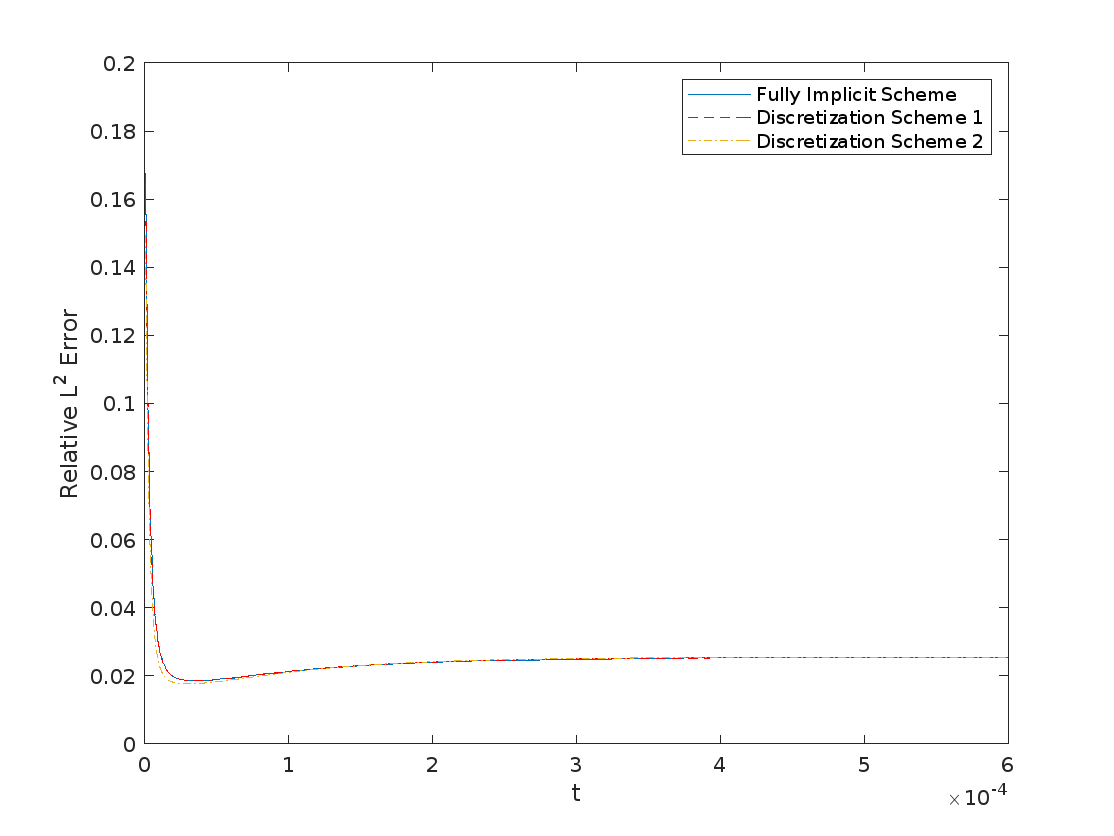}
  \quad
  \includegraphics[width=5cm]{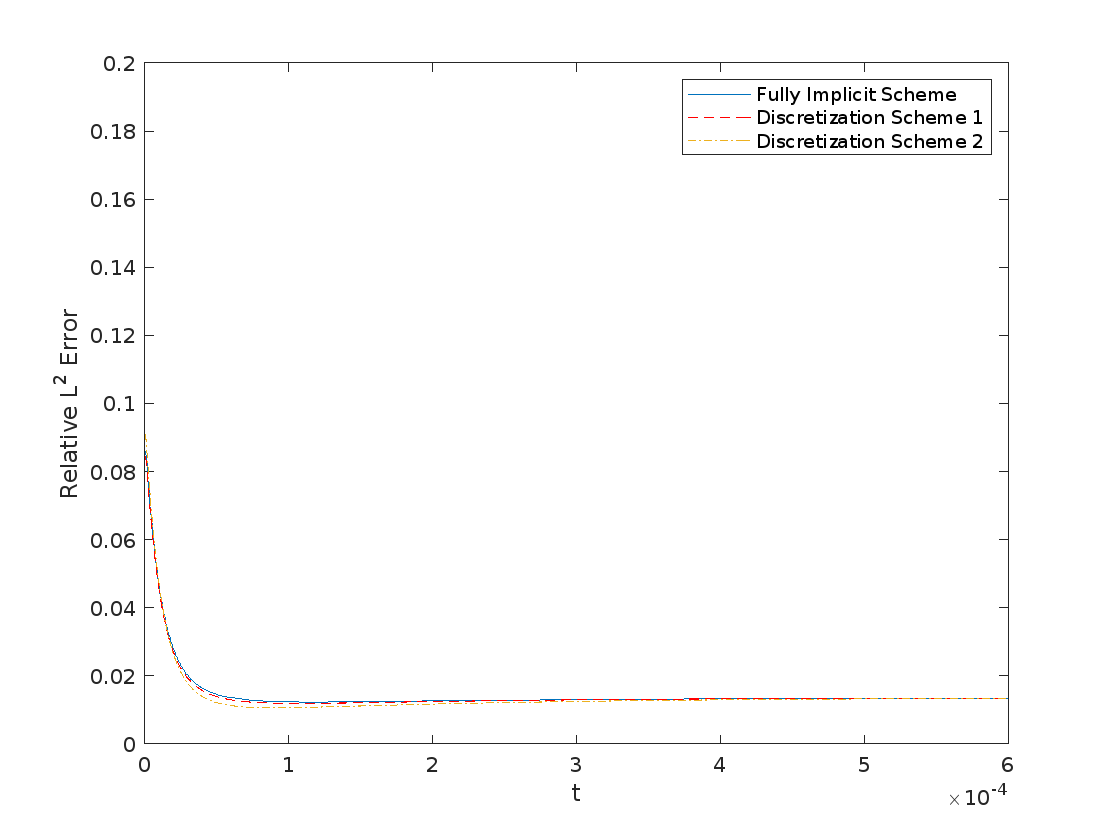}
  \caption{Relative $L^2$ error for different schemes when $H=1/20$ and $l=6$ in Example 3. Left: $e^{(1)}(t)$. Middle: $e^{(2)}(t)$. Right: $e^{(3)}(t)$.}
  \label{fig:Example_3_error_2}
\end{figure}

\subsubsection{Example 4}

In Example 4, we again consider the three-continuum case, but with a different permeability field as shown in Figure \ref{fig:Example_4_setting} and different definitions of continua (or auxiliary functions $\psi$'s). The lowest value of the field is $1$, the highest value is $10^7$, and the medium value is $10^2$, with the corresponding regions denoted by $\Omega_1$, $\Omega_2$ and $\Omega_3$. 
The auxiliary functions $\psi_1$, $\psi_2$ and $\psi_3$ are respectively chosen as the characteristic functions in low-high-value regions, low-medium-value regions and medium-high-value regions; that is, $\psi_1(x) = \mathds{1}_{\Omega_1 \cup \Omega_2}(x)$, $\psi_2(x) = \mathds{1}_{\Omega_1 \cup \Omega_3}(x)$, and $\psi_3(x) = \mathds{1}_{\Omega_2 \cup \Omega_3}(x)$.
We can similarly obtain the results of eigenvalue problems in Table \ref{tab:Example_4_eigen}. It can be observed that by sufficiently mixing and taking linear combinations of the continua, our algorithm effectively separates the slow modes corresponding to the two low eigenvalues from the overall dynamics. The slow modes are processed explicitly to reduce computational cost, while the remaining fast mode is process implicitly to ensure stability.

\begin{figure}
    \centering
    \includegraphics[width=7cm]{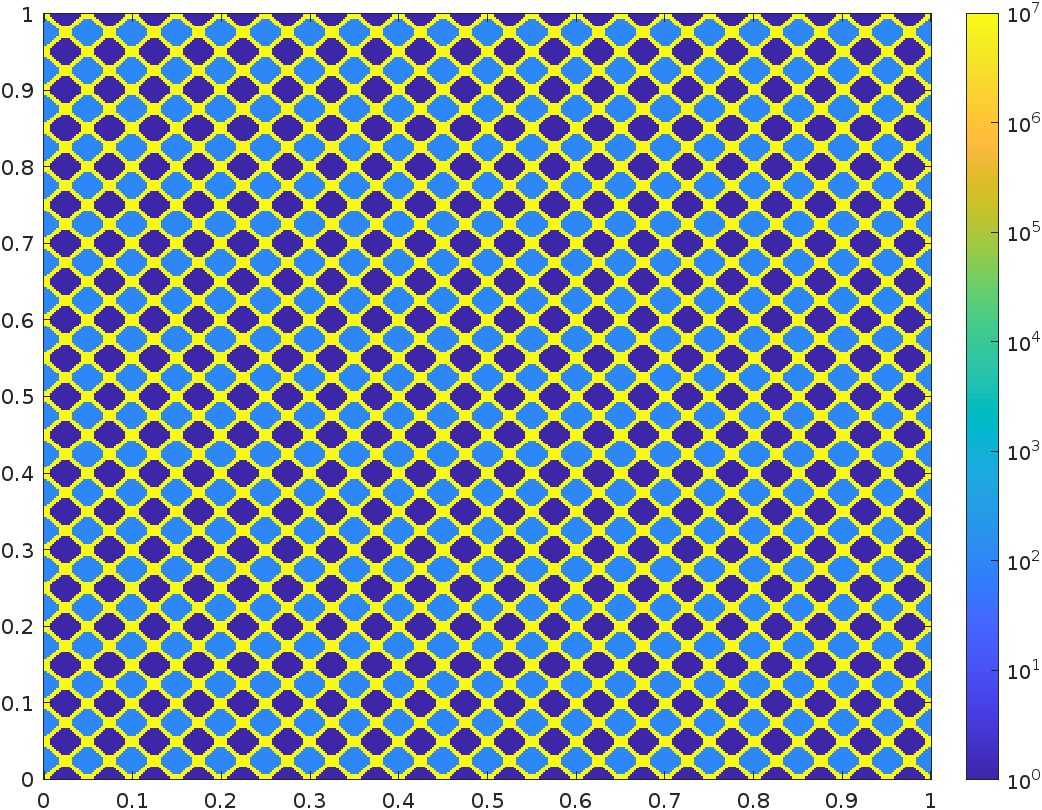}
    \quad
    \includegraphics[width=7cm]{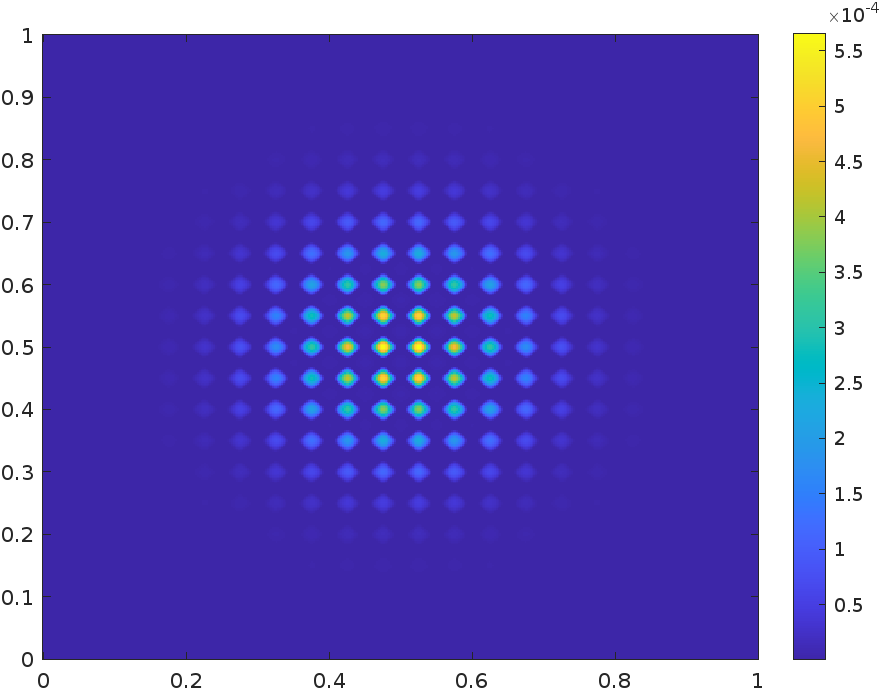}
    \caption{Left: Three-continuum field $\kappa$ in Example 4. Right: Reference solution at the final time $T$ in Example 4.}
    \label{fig:Example_4_setting}
\end{figure}

\begin{table}
    \centering
    \caption{Eigenvalues and corresponding eigenvectors of eigenvalue problems for different coarse mesh sizes $H$ in Example 4. Values below $5 \times 10^{-5}$ are listed as zero for simplification.}
    \begin{tabular}{|c|c|c|c|}
         \hline
         $H$ & $l$ & eigenvalues $\lambda_i$ &  eigenvectors $v_i$ \\
         \hline
         \multirow{3}*{1/10} & \multirow{3}*{5} & $7.5039 \times 10^{2}$ & $(-0.4058, 0.2058, 0.6181)^T$ \\
         \cline{3-4}
         & & $4.8925 \times 10^{3}$ & $(0.6170, 0.9933, 0.4051)^T$ \\
         \cline{3-4}
         & & $3.5383 \times 10^{6}$ & $(0.8840, 0.4151, 0.8835)^T$ \\
         \hline
         \multirow{3}*{1/20} & \multirow{3}*{6} & $3.5031 \times 10^{2}$ & $(-0.4389, 0.1513, 0.5949)^T$ \\
         \cline{3-4}
         & & $2.1999 \times 10^{3}$ & $(0.5944, 1.0032, 0.4386)^T$ \\
         \cline{3-4}
         & & $3.5261 \times 10^{6}$ & $(0.8836, 0.4147, 0.8834)^T$ \\
         \hline
    \end{tabular}   
    \label{tab:Example_4_eigen}
\end{table}

Since the critical eigenvalue in this example is higher than in Example 3 due to the properties of the coefficient field, we choose a finer time step, such as $\Delta t = 5 \times 10^{-8}$. We set the final time to $T = 1.5\times 10^{-4}$, at which point the reference solution, having reached an almost-steady state, is plotted in Figure \ref{fig:Example_4_setting}. 
Then we solve the parabolic equation using the multicontinuum fully implicit scheme and the multicontinuum splitting schemes. It is important to note that the macroscopic solutions obtained by our proposed schemes are linear combinations of the multicontinuum variables induced by $\psi_i$'s. Therefore, we need to project the macroscopic solutions back before obtaining the multiscale solutions in continuum $i$ and calculating the errors. 
The relative $L^2$ errors for different schemes are presented in Figures \ref{fig:Example_4_error_1} and \ref{fig:Example_4_error_2}. Comparable patterns to those in Example 3 are observed.

\begin{figure}
  \centering
  \includegraphics[width=5cm]{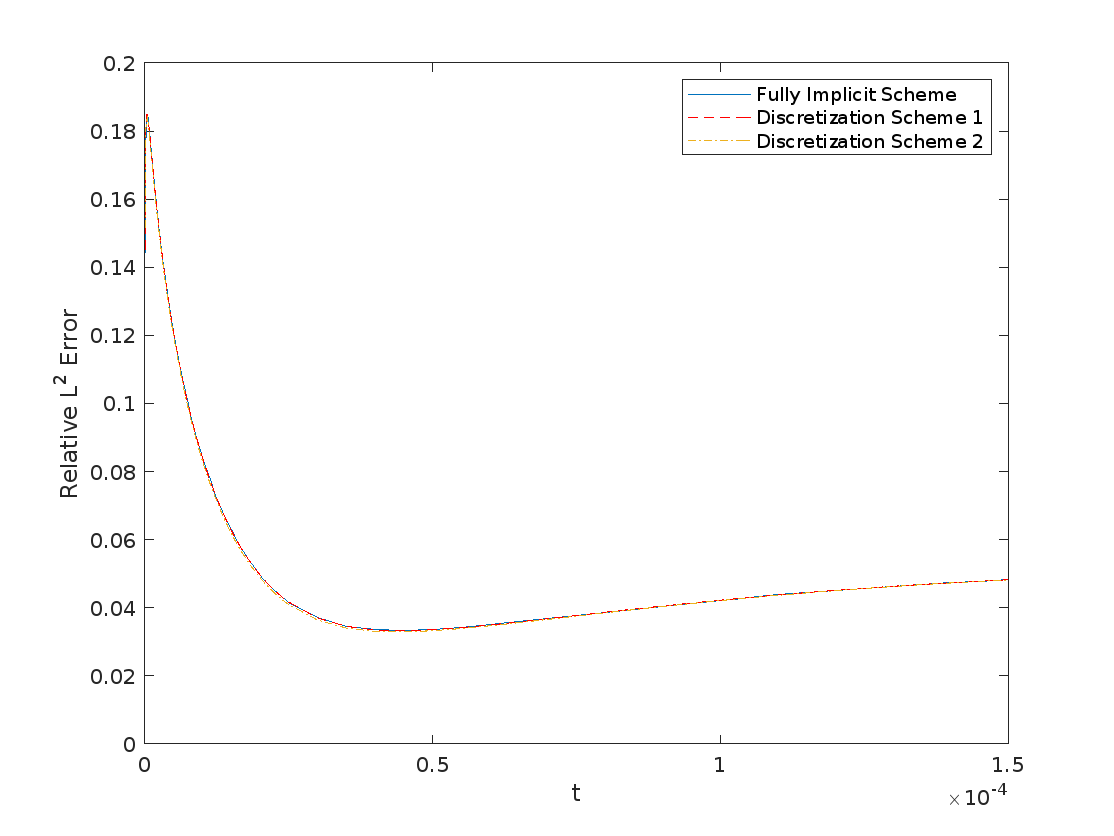}
  \quad
  \includegraphics[width=5cm]{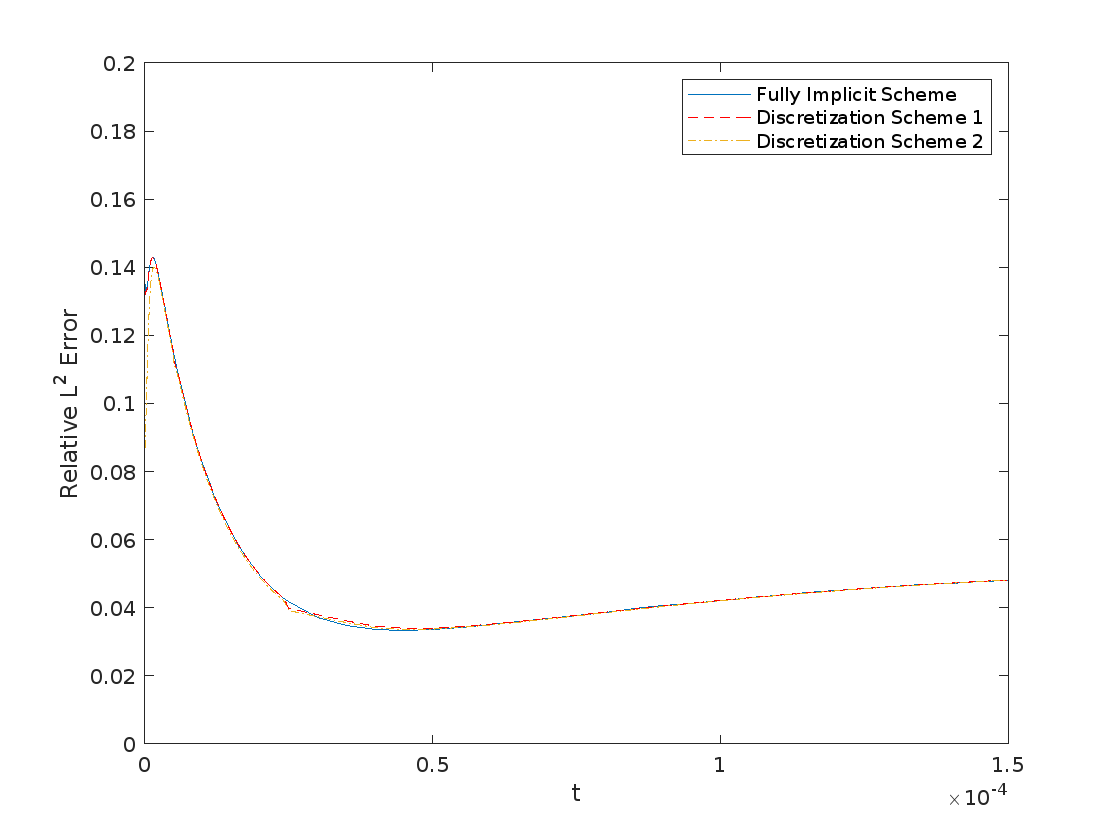}
  \quad
  \includegraphics[width=5cm]{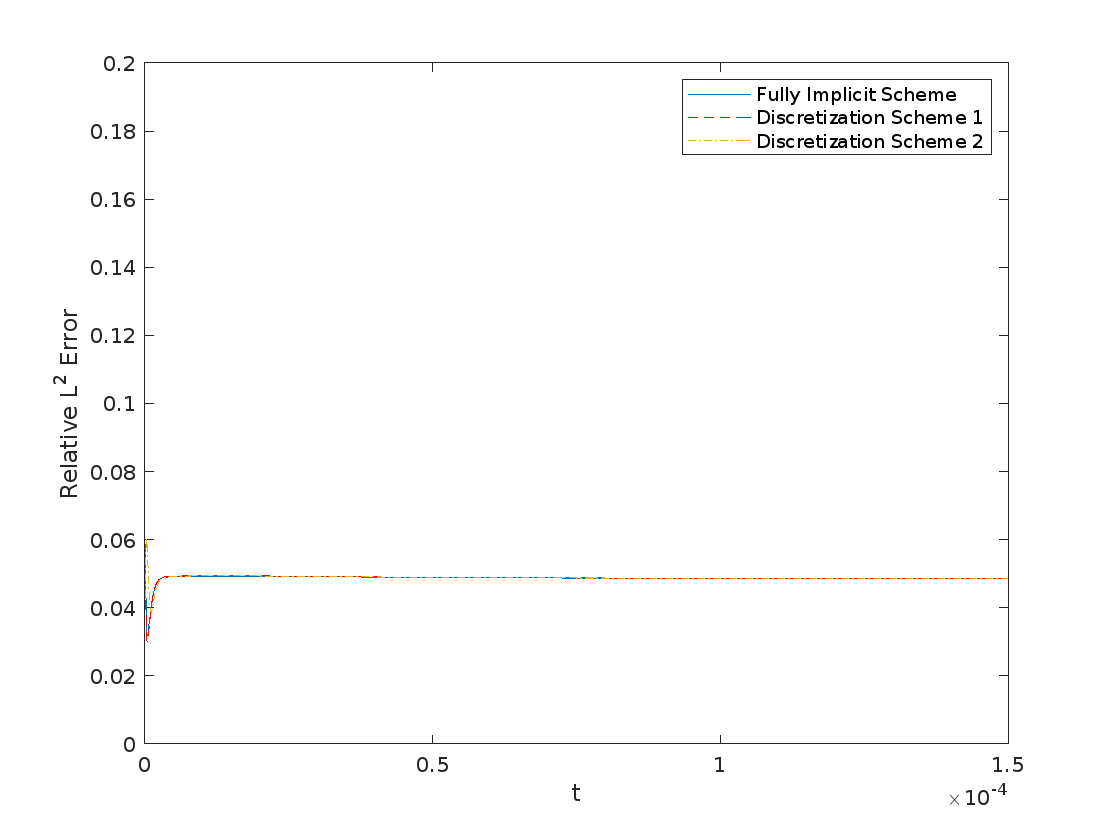}
  \caption{Relative $L^2$ error for different schemes when $H=1/10$ and $l=5$ in Example 4. Left: $e^{(1)}(t)$. Middle: $e^{(2)}(t)$. Right: $e^{(3)}(t)$.}
  \label{fig:Example_4_error_1}
\end{figure}

\begin{figure}
  \centering
  \includegraphics[width=5cm]{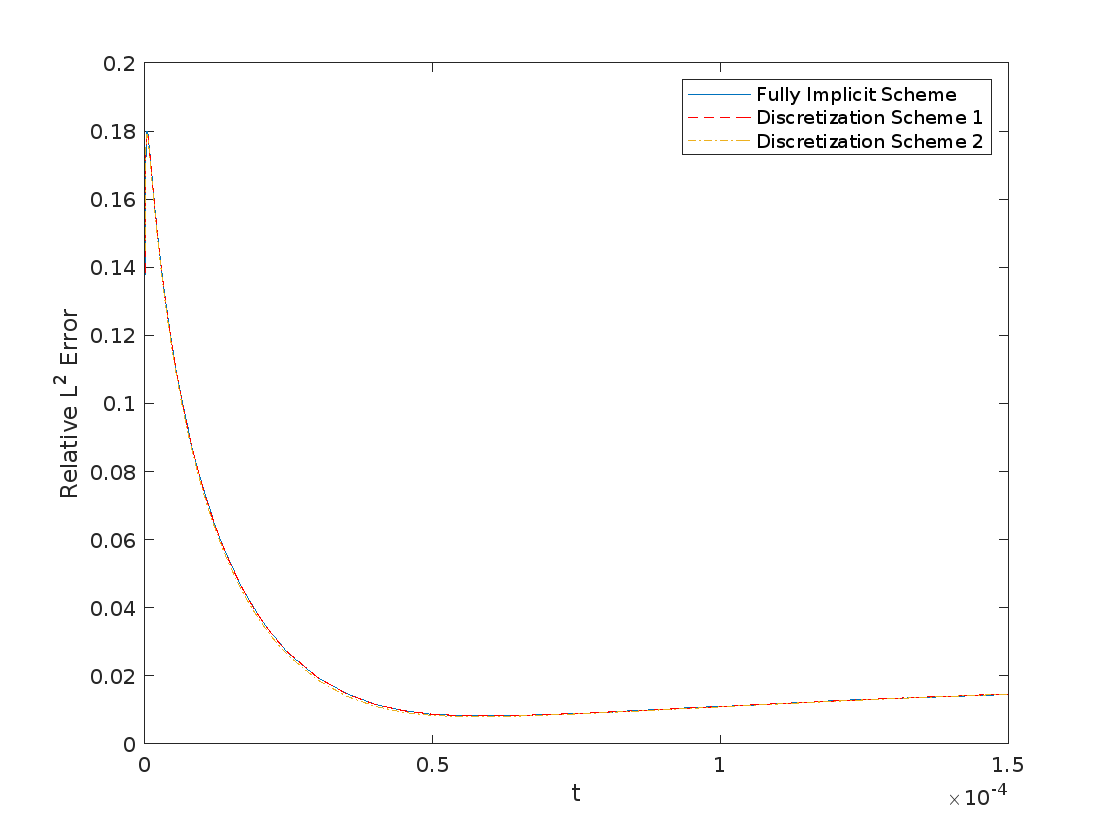}
  \quad
  \includegraphics[width=5cm]{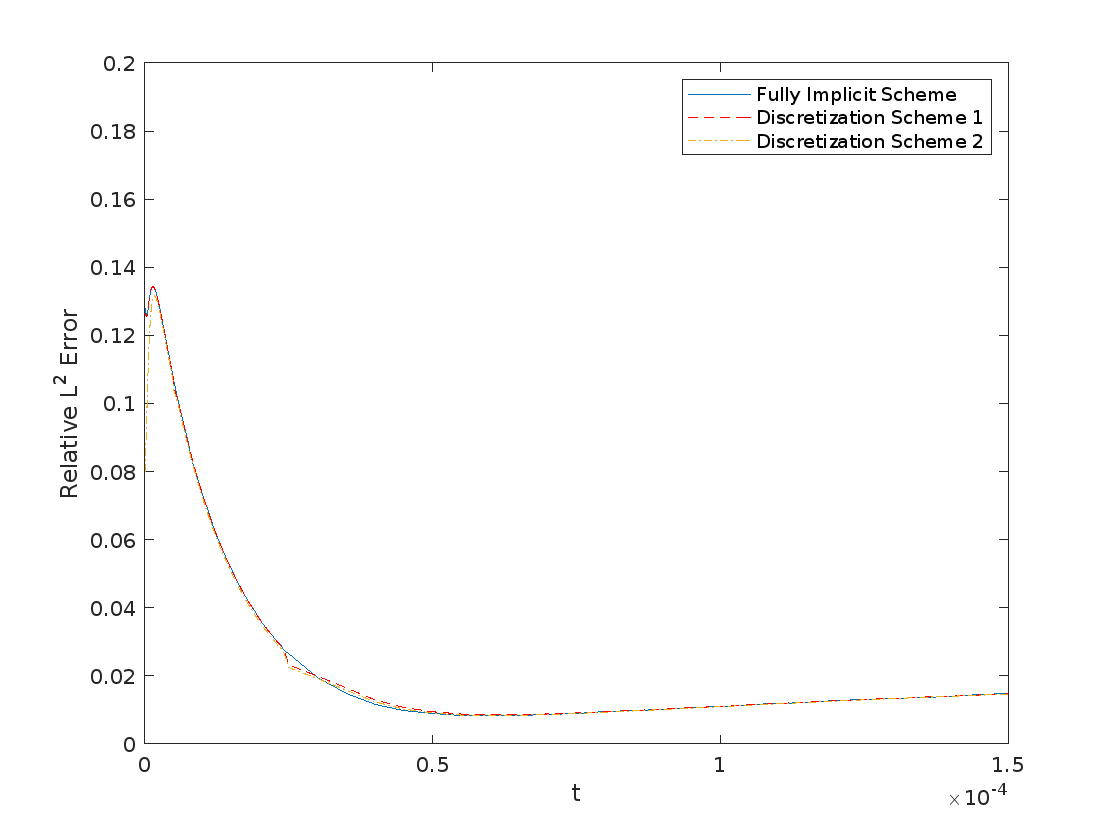}
  \quad
  \includegraphics[width=5cm]{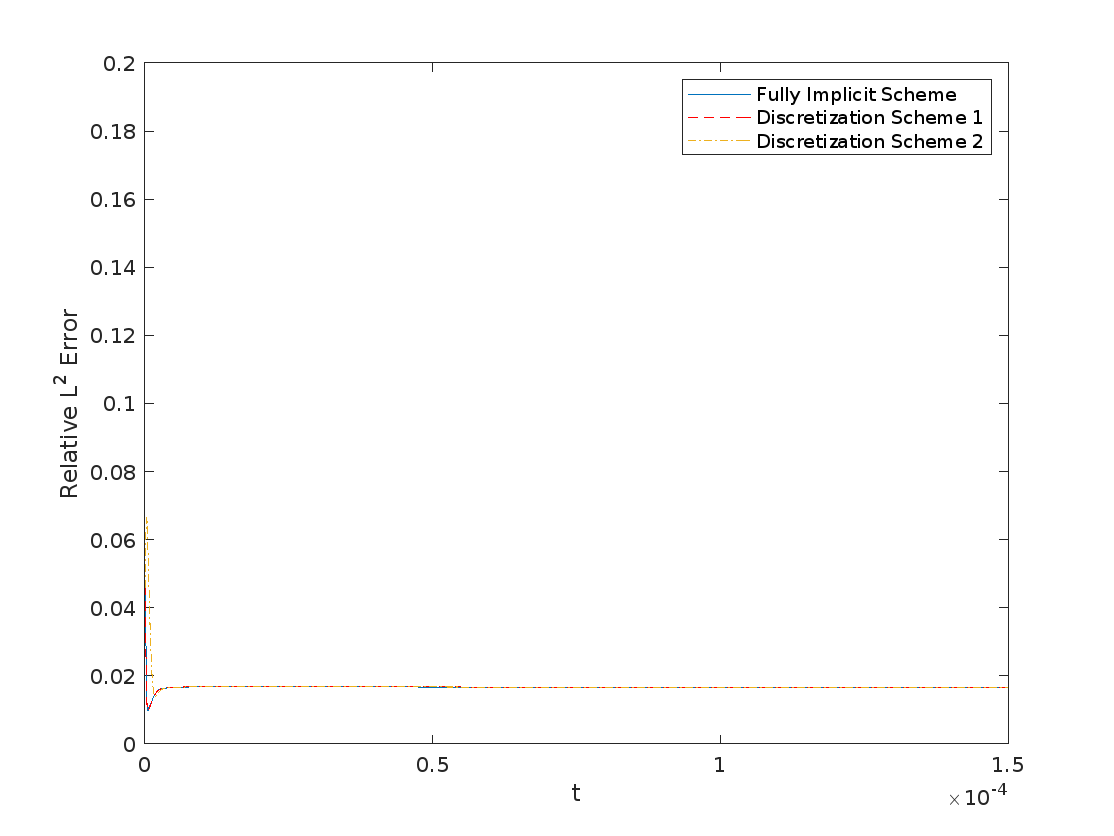}
  \caption{Relative $L^2$ error for different schemes when $H=1/20$ and $l=6$ in Example 4. Left: $e^{(1)}(t)$. Middle: $e^{(2)}(t)$. Right: $e^{(3)}(t)$.}
  \label{fig:Example_4_error_2}
\end{figure}

\subsubsection{Example 5}

In this example, we consider a more general case where not all the auxiliary functions are characteristic functions. 
We consider the same layered field $\kappa$ as in Example 1, with $\max{\kappa} = 10^5$. We introduce four auxiliary functions representing constants and linear functions within the regions of low and high values. Specifically, in a coarse block (RVE), we define $\psi_1(x) = \mathds{1}_{\Omega_1}(x)$, $\psi_2(x) = \mathds{1}_{\Omega_2}(x)$, $\psi_3(x) = (x_1-c_1) \mathds{1}_{\Omega_1}(x) $, and $\psi_4(x) = (x_1-c_1) \mathds{1}_{\Omega_2}(x)$, where $\Omega_1$ and $\Omega_2$ are respectively the low-value and high-value regions, and $c_1$ is the $x_1$-coordinate of the leftmost point of the coarse block. 
The results of the eigenvalue problems are presented in Table \ref{tab:Example_5_eigen}. We observe that the slow and fast dynamics are separated as expected. The continua associated with the characteristic function and its linear component in the low-value regions represent slow modes and are classified into the explicit component, while those associated with the high-value regions represent fast modes and are classified into the implicit component.

\begin{table}
    \centering
    \caption{Eigenvalues and corresponding eigenvectors of eigenvalue problems for different coarse mesh sizes $H$ in Example 5. Values below $5 \times 10^{-5}$ are listed as zero for simplification.}
    \begin{tabular}{|c|c|c|c|}
         \hline
         $H$ & $l$ & eigenvalues $\lambda_i$ &  eigenvectors $v_i$ \\
         \hline
         \multirow{4}*{1/10} & \multirow{4}*{5} & $1.6617 \times 10^{1}$ & $(0.2985, 0.0000, -0.2195, 0.0000)^T$ \\
         \cline{3-4}
         & & $2.7939 \times 10^{2}$ & $(0.9732, 0.0000, 1.1321, 0.0000)^T$ \\
         \cline{3-4}
         & & $5.3862 \times 10^{5}$ & $(0.0695, 0.4060, -0.0172, -0.3585)^T$ \\
         \cline{3-4}
         & & $8.7003 \times 10^{6}$ & $(0.2865, 1.6745, 0.3075, 1.8599)^T$ \\
         \hline
         \multirow{4}*{1/20} & \multirow{4}*{6} & $1.0879 \times 10^{1}$ & $(0.2714, 0.0000, -0.2271, 0.0000)^T$ \\
         \cline{3-4}
         & & $1.9384 \times 10^{2}$ & $(0.9811, 0.0000, 1.1190, 0.0000)^T$ \\
         \cline{3-4}
         & & $6.7157 \times 10^{5}$ & $(0.0751, 0.4389, -0.0251, -0.3626)^T$ \\
         \cline{3-4}
         & & $7.5412 \times 10^{6}$ & $(0.2850, 1.6662, 0.3114, 1.8773)^T$ \\
         \hline
         \multirow{4}*{1/40} & \multirow{4}*{8} & $9.5558 \times 10^{0}$ & $(0.2574, 0.0000, -0.2273, 0.0000)^T$ \\
         \cline{3-4}
         & & $1.7885 \times 10^{2}$ & $(0.9849, 0.0000, 1.1115, 0.0000)^T$ \\
         \cline{3-4}
         & & $8.6817 \times 10^{5}$ & $(0.0828, 0.4842, -0.0208, -0.3324)^T$ \\
         \cline{3-4}
         & & $5.5922 \times 10^{6}$ & $(0.2829, 1.6536, 0.3132, 1.8927)^T$ \\
         \hline
    \end{tabular}   
    \label{tab:Example_5_eigen}
\end{table}

As in Example 1, we set the final time to $T=1.5 \times 10^{-4}$ and the step to $\Delta t=10^{-7}$. We again need to project the macroscopic solutions before computing the errors. The relative errors of different schemes are depicted in Figures \ref{fig:Example_5_error_1}, \ref{fig:Example_5_error_2} and \ref{fig:Example_5_error_3}. It can be observed that the initial errors of the proposed method are slightly smaller than those in Example 1, but overall they are similar and remain minor.

\begin{figure}
  \centering
  \includegraphics[width=7cm]{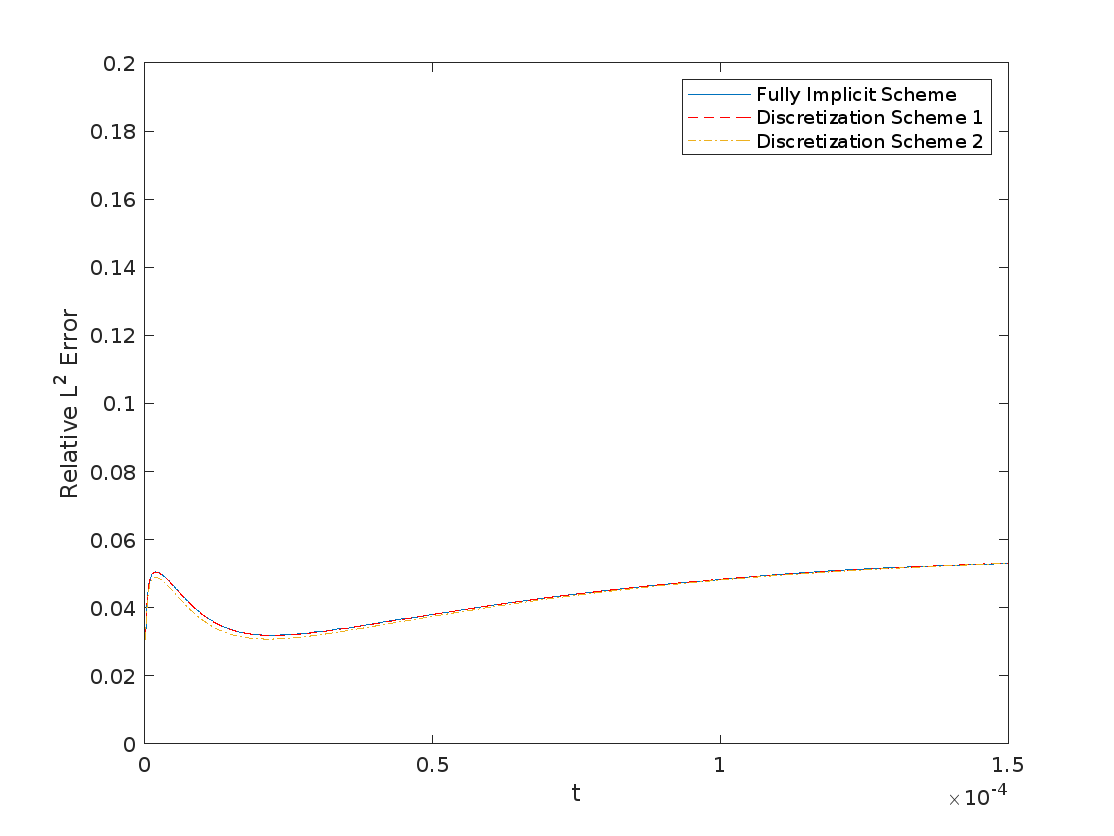}
  \quad
  \includegraphics[width=7cm]{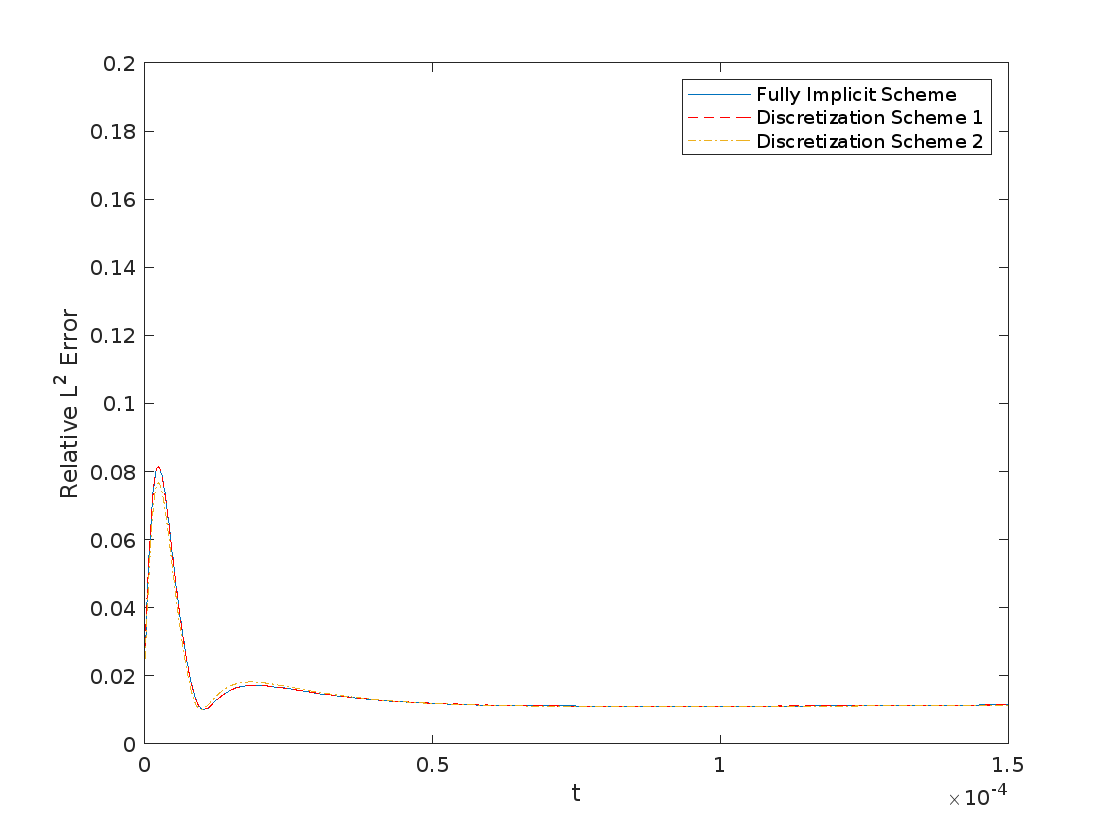}
  \quad
  \caption{Relative $L^2$ error for different schemes when $H=1/10$ and $l=5$ in Example 5. Left: $e^{(1)}(t)$. Right: $e^{(2)}(t)$.}
  \label{fig:Example_5_error_1}
\end{figure}

\begin{figure}
  \centering
  \includegraphics[width=7cm]{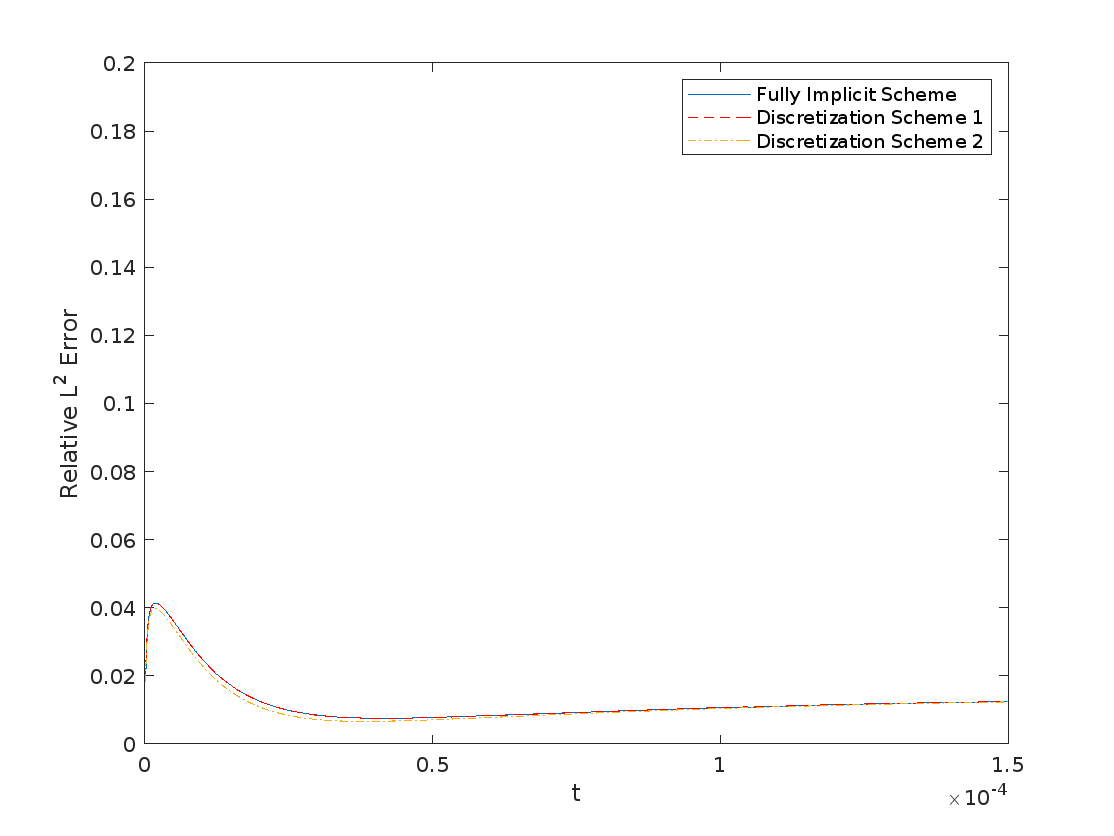}
  \quad
  \includegraphics[width=7cm]{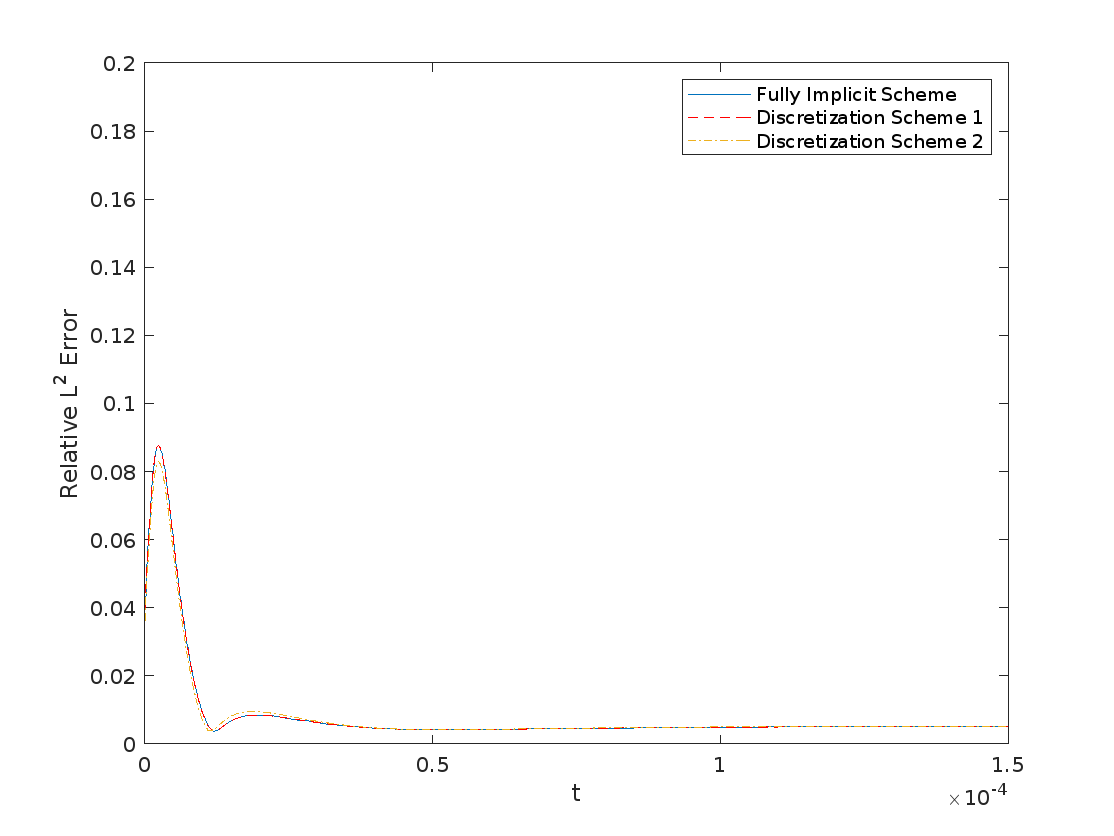}
  \quad
  \caption{Relative $L^2$ error for different schemes when $H=1/20$ and $l=6$ in Example 5. Left: $e^{(1)}(t)$. Right: $e^{(2)}(t)$.}
  \label{fig:Example_5_error_2}
\end{figure}

\begin{figure}
  \centering
  \includegraphics[width=7cm]{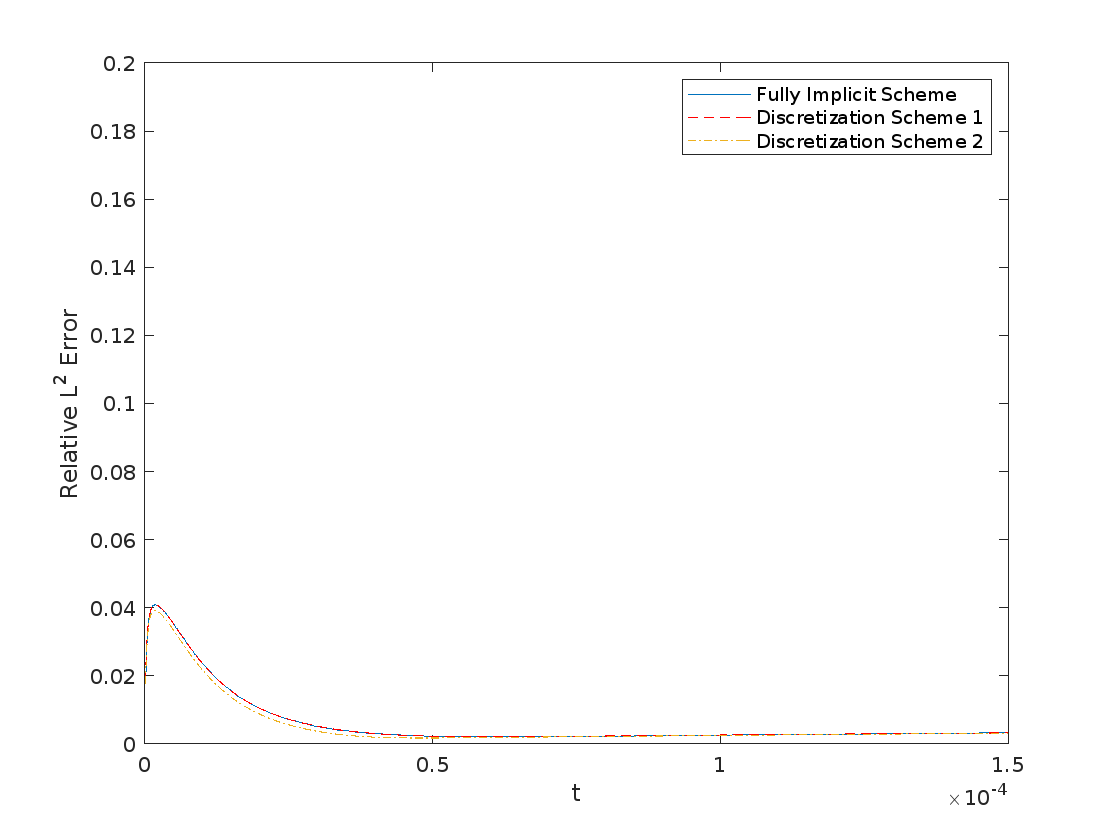}
  \quad
  \includegraphics[width=7cm]{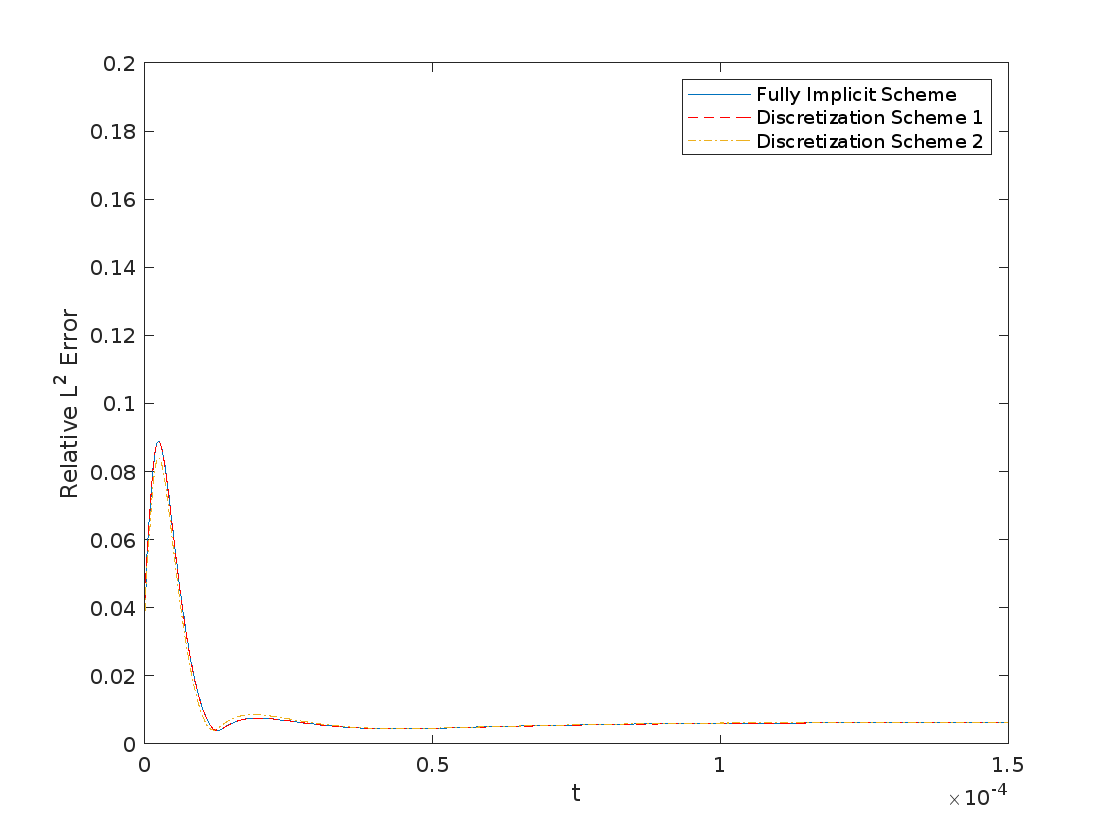}
  \quad
  \caption{Relative $L^2$ error for different schemes when $H=1/40$ and $l=8$ in Example 5. Left: $e^{(1)}(t)$. Right: $e^{(2)}(t)$.}
  \label{fig:Example_5_error_3}
\end{figure}

\section{Conclusions}
\label{sec:conclusions}

In this paper, we have developed multicontinuum splitting schemes for parabolic equations with high-contrast coefficients. By introducing macroscopic variables and utilizing multicontinuum expansion, we decompose the solution space into two components. This decomposition aims to separate the dynamics at different speeds (or the effects of contrast). By treating the component containing fast dynamics (or dependent on the contrast) implicitly and the one containing slow dynamics (or independent of the contrast) explicitly, we design partially explicit time discretization schemes to reduce computational cost. The stability conditions on the time step size are contrast-independent, provided the continua are chosen appropriately. Furthermore, we consider a more general case where the space decomposition is not predefined, and we discuss possible methods to achieve an optimized decomposition, which can further relax the stability conditions and improve the efficiency. The min-max optimization problem can be simplified into a generalized eigenvalue problem under certain assumptions.
Numerical results for various coefficient fields and different numbers of continua are presented. The results demonstrate that the fast and slow dynamics can be effectively separated as anticipated, and that the multicontinuum splitting schemes, which exhibit higher efficiency and contrast-independent stability conditions, achieve accuracy comparable to the fully implicit schemes based on multicontinuum homogenization.

We remark that our proposed approach could also be applied to other multiscale problems. For more complicated coefficients, we can introduce as many continua as necessary such that their linear combinations can represent dynamics at different speeds. However, the optimal identification of continua for complex problems remains an open question. Achieving such optimal identification can significantly reduce the degrees of freedom and lower the model order while preserving accuracy, making it an important direction for future research.

\bibliographystyle{unsrt}
\bibliography{ref}

\begin{thebibliography}{10}

\bibitem{papanicolau1978asymptotic}
George Papanicolau, Alain Bensoussan, and J-L Lions.
\newblock {\em Asymptotic analysis for periodic structures}.
\newblock Elsevier, 1978.

\bibitem{wu2002analysis}
Xiao-Hui Wu, Yalchin Efendiev, and Thomas~Y Hou.
\newblock Analysis of upscaling absolute permeability.
\newblock {\em Discrete and Continuous Dynamical Systems Series B}, 2(2):185--204, 2002.

\bibitem{hornung2012homogenization}
Ulrich Hornung.
\newblock {\em Homogenization and porous media}, volume~6.
\newblock Springer Science \& Business Media, 2012.

\bibitem{hou1997multiscale}
Thomas~Y Hou and Xiao-Hui Wu.
\newblock A multiscale finite element method for elliptic problems in composite materials and porous media.
\newblock {\em Journal of Computational Physics}, 134(1):169--189, 1997.

\bibitem{hou1999convergence}
Thomas Hou, Xiao-Hui Wu, and Zhiqiang Cai.
\newblock Convergence of a multiscale finite element method for elliptic problems with rapidly oscillating coefficients.
\newblock {\em Mathematics of computation}, 68(227):913--943, 1999.

\bibitem{efendiev2009multiscale}
Yalchin Efendiev and Thomas~Y Hou.
\newblock {\em Multiscale finite element methods: theory and applications}, volume~4.
\newblock Springer Science \& Business Media, 2009.

\bibitem{efendiev2013generalized}
Yalchin Efendiev, Juan Galvis, and Thomas~Y Hou.
\newblock Generalized multiscale finite element methods (gmsfem).
\newblock {\em Journal of Computational Physics}, 251:116--135, 2013.

\bibitem{chung2015mixed}
Eric~T Chung, Yalchin Efendiev, and Chak~Shing Lee.
\newblock Mixed generalized multiscale finite element methods and applications.
\newblock {\em Multiscale Modeling \& Simulation}, 13(1):338--366, 2015.

\bibitem{chung2016adaptive}
Eric Chung, Yalchin Efendiev, and Thomas~Y Hou.
\newblock Adaptive multiscale model reduction with generalized multiscale finite element methods.
\newblock {\em Journal of Computational Physics}, 320:69--95, 2016.

\bibitem{chung2018constraint}
Eric~T. Chung, Yalchin Efendiev, and Wing~Tat Leung.
\newblock Constraint energy minimizing generalized multiscale finite element method.
\newblock {\em Computer Methods in Applied Mechanics and Engineering}, 339:298--319, 2018.

\bibitem{chung2018constraint2}
Eric Chung, Yalchin Efendiev, and Wing~Tat Leung.
\newblock Constraint energy minimizing generalized multiscale finite element method in the mixed formulation.
\newblock {\em Computational Geosciences}, 22:677--693, 2018.

\bibitem{chung2023multiscale}
Eric Chung, Yalchin Efendiev, and Thomas~Y Hou.
\newblock {\em Multiscale Model Reduction: Multiscale Finite Element Methods and Their Generalizations}, volume 212.
\newblock Springer Nature, 2023.

\bibitem{chung2018non}
Eric~T. Chung, Yalchin Efendiev, Wing~Tat Leung, Maria Vasilyeva, and Yating Wang.
\newblock Non-local multi-continua upscaling for flows in heterogeneous fractured media.
\newblock {\em Journal of Computational Physics}, 372:22--34, 2018.

\bibitem{chung2021contrast}
Eric~T Chung, Yalchin Efendiev, Wing~Tat Leung, and Petr~N Vabishchevich.
\newblock Contrast-independent partially explicit time discretizations for multiscale flow problems.
\newblock {\em Journal of Computational Physics}, 445:110578, 2021.

\bibitem{chung2021contrast2}
Eric~T Chung, Yalchin Efendiev, Wing~Tat Leung, and Wenyuan Li.
\newblock Contrast-independent, partially-explicit time discretizations for nonlinear multiscale problems.
\newblock {\em Mathematics}, 9(23):3000, 2021.

\bibitem{efendiev2023multicontinuum}
Yalchin Efendiev and Wing~Tat Leung.
\newblock Multicontinuum homogenization and its relation to nonlocal multicontinuum theories.
\newblock {\em Journal of Computational Physics}, 474:111761, 2023.

\bibitem{chung2024multicontinuum}
E.~Chung, Y.~Efendiev, J.~Galvis, and W.T. Leung.
\newblock Multicontinuum homogenization. general theory and applications.
\newblock {\em Journal of Computational Physics}, 510:112980, 2024.

\bibitem{ammosov2024multicontinuum}
Dmitry Ammosov, WT~Leung, Buzheng Shan, and Jian Huang.
\newblock Multicontinuum homogenization for coupled flow and transport equations.
\newblock {\em arXiv preprint arXiv:2405.14572}, 2024.

\bibitem{leung2024some}
Wing~Tat Leung.
\newblock Some convergence analysis for multicontinuum homogenization.
\newblock {\em arXiv preprint arXiv:2401.12799}, 2024.

\bibitem{aldaz2009strengthened}
JM~Aldaz.
\newblock Strengthened cauchy-schwarz and h{\"o}lder inequalities.
\newblock {\em JIPAM. Journal of Inequalities in Pure \& Applied Mathematics [electronic only]}, 10(4):Paper--No, 2009.

\end{thebibliography}

\end{document}